
\documentclass{aic}

\usepackage{cite}
\usepackage{latexsym}
\usepackage{amsfonts}
\usepackage{multirow}

\newtheorem{dfn}{Definition}
\newtheorem{theorem}[dfn]{Theorem}
\newtheorem{lemma}[dfn]{Lemma}
\newtheorem{question}[dfn]{Question}

\newtheorem{corollary}[dfn]{Corollary}

\def\Bin{\mathrm{Bin}}

\newcommand{\E}{{\mathbb{E}}}

\newcommand{\lt}{\left}
\newcommand{\rt}{\right}

\newcommand{\Prob}{{\mathbb{P}}}

\aicAUTHORdetails{%
  title = {Bisections of graphs under degree constraints},
  author = {Jie Ma and Hehui Wu},
  plaintextauthor = {Jie Ma, Hehui Wu},
  plaintexttitle = {Bisections of graphs under degree constraints},
  runningtitle = {Bisections of graphs under degree constraints},
  runningauthor = {Jie Ma and Hehui Wu},
  copyrightauthor = {Jie Ma and Hehui Wu},
}

 \aicEDITORdetails{%
    year={2026},
   number={5},
    received={24 June 2025},
    published={17 July 2026},
    doi={10.19086/aic.2026.5},
 }

\begin{document}

\begin{frontmatter}
\title{Bisections of graphs under degree constraints}
\date{}

\author[jie]{Jie Ma\thanks{Research supported by National Key Research and Development Program of China 2023YFA1010201 and National Natural Science Foundation of China grant 12125106.}}
\author[hehui]{Hehui Wu\thanks{Research supported by National Natural Science Foundation of China grants 12371343 and 12525110, National Key Research and Development Program of China 2020YFA0713200, and the Shanghai Dawn Scholar Program grant 19SG01.}}

\begin{abstract}
		In this paper, we investigate the problem of finding {\it bisections} (i.e., balanced bipartitions) in graphs.
		We prove the following two results for {\it all} graphs $G$:
		\begin{itemize}
			\item $G$ has a bisection where each vertex $v$ has at least $(1/4 -o(1))d_G(v)$ neighbors in its own part;
			\item $G$ also has a bisection where each vertex $v$ has at least $(1/4 -o(1))d_G(v)$ neighbors in the opposite part.
		\end{itemize}
		These results are asymptotically optimal up to a factor of $1/2$, aligning with what is expected from random constructions,
		and provide the first systematic understanding of bisections in general graphs under degree constraints.
		As a consequence, we establish for the first time the existence of a function $f(k)$ such that for any $k\geq 1$, every graph with minimum degree at least $f(k)$ admits a bisection where every vertex has at least $k$ neighbors in its own part,
		as well as a bisection where every vertex has at least $k$ neighbors in the opposite part.
		
		Using a more general setting, we further show that for any $\varepsilon > 0$, there exist $c_\varepsilon, c'_\varepsilon > 0$ such that any graph $G$ with minimum degree at least $c_\varepsilon k$ (respectively, $c'_\varepsilon k$) admits a bisection satisfying:
		\begin{itemize}
			\item every vertex has at least $k$ neighbors in its own part (respectively, in the opposite part), and
			\item at least $(1 - \varepsilon)|V(G)|$ vertices have at least $k$ neighbors in the opposite part (respectively, in their own part).
		\end{itemize}
		These results extend and strengthen classical graph partitioning theorems of Erd\H{o}s, Thomassen, and K\"{u}hn--Osthus, while additionally satisfying the bisection requirement.

\end{abstract}
\end{frontmatter}

\section{Introduction}
	\noindent
	The problem of graph partitioning (i.e., partitioning the vertex set of a graph into two or more parts) has been a central topic in combinatorics and theoretical computer science since the advent of modern graph theory in the last century.
	It has motivated a substantial body of research, spanning classical results in structural and extremal graph theory (see, e.g., \cite{Lov,Ed73,Ed75}) to major advances in computational complexity and approximation algorithms for problems such as Max-Cut (see \cite{Karp,GW}), among others.
	While traditional results in graph partitioning often optimize a single quantity (such as \cite{Ed73,Ed75,GW}),
	recent work increasingly focuses on partitioning under multiple simultaneous constraints.
	
	A natural problem is to partition graphs under {\it degree constraints}.
	In this direction, an early result by Erd\H{o}s shows that every graph with minimum degree at least $2k-1$ admits a bipartition such that
	\begin{equation}\label{equ:maxcut}
		\mbox{every vertex has at least $k$ neighbors in the opposite part.}
	\end{equation}
	Complementarily, Thomassen \cite{T83} established the existence of a function $f(k)$ such that every graph $G$ with minimum degree at least $f(k)$ admits a bipartition $V(G)=A\cup B$ satisfying
	\begin{equation}\label{equ:Thom}
		\mbox{every vertex has at least $k$ neighbors in its own part,}
	\end{equation}
	i.e., both induced subgraphs $G[A]$ and $G[B]$ have minimum degree at least $k$.
	This function $f(k)$ was improved by Hajnal \cite{Haj} and was later determined to be $f(k)=2k+1$ by Stiebitz \cite{S96}, resolving a conjecture of Thomassen~\cite{T88}.
	A compelling question arises regarding whether both \eqref{equ:maxcut} and \eqref{equ:Thom} can hold simultaneously for graphs with sufficiently large minimum degree.
	K\"{u}hn and Osthus~\cite{KO} constructed examples showing this is impossible even when $k=1$.
	Despite this limitation, they proved a remarkable compromise:
	there exists a function $g(k)$ such that every graph $G$ with minimum degree at least $g(k)$ admits a bipartition $V(G)=A\cup B$ such that
	\begin{equation}\label{equ:KO}
		\mbox{\eqref{equ:Thom} holds, and every vertex in $A$ has at least $k$ neighbors in $B$.}
	\end{equation}
	The function $g(k)$ was recently improved to be a linear function in \cite{MW}, answering a question from~\cite{KO}.
	In a different direction, K\"uhn and Osthus \cite{KO} generalized Thomassen's result by further bounding the average degree of the cut across the bipartition from below:
	every graph $G$ with minimum degree at least $2^{32}k$ admits a bipartition $V(G)=A\cup B$ such that
	\begin{equation}\label{equ:KO2}
		\mbox{\eqref{equ:Thom} holds, and the bipartite subgraph $(A,B)_G$ has average degree at least $k$.}\footnote{In additional, the sizes of the parts $A$ and $B$ are at least $|V(G)|/2^{18}$; see Theorem~5 in \cite{KO}.}
	\end{equation}
	The results in \eqref{equ:KO} and \eqref{equ:KO2} reveal novel aspects of graph partitioning, which also lead to important applications in structural graph theory (see \cite{KO}).

	Another natural problem that has garnered considerable attention in recent decades is graph partitioning with {\it part-size constraints}.
	A canonical and notable example is the search for a {\it bisection} in graphs $G$, that is, a bipartition $V(G) = A \cup B$ satisfying $\big||A| - |B|\big| \leq 1$.
	Extensive research has been conducted on finding bisections with various properties; see the surveys \cite{BS,Scott} and references \cite{DPS,FHY,HW21,HWY,LLS,LZ,LMZ,XY}, most of which focus on bounding the number of edges (or arcs in digraphs) either between the two parts or within each part.
	A prominent conjecture of Bollob\'as and Scott \cite{BS} (see its Conjecture~8) in this area states that every graph $G$ has a bisection satisfying \begin{equation}\label{conj:BS}
		\mbox{every vertex $v$ has at least $\frac{d_G(v)}{2}-\frac{1}{2}$ neighbors in its opposite part.}
	\end{equation}
	This original statement fails for an infinite family of graphs, as shown in \cite{JMYY}, but its equally important weaker version, which replaces the additive constant $-\frac{1}{2}$ with $O(1)$ in \eqref{conj:BS}, remains widely open.
	In \cite{BL}, Ban and Linial investigated several partitioning problems for regular graphs and proposed a related conjecture (see Conjecture~1 in \cite{BL}), asserting that every bridgeless cubic graph, except for the Petersen graph, has a bisection such that every vertex has at least two neighbors in its opposite part.
	Another longstanding conjecture of F\"uredi, which also appears in Green's 100 open problem list \cite{Green},
	states that with high probability, the Erd\H{o}s-R\'enyi random graph $G(n,1/2)$ has a bisection in which all but $o(n)$ vertices have at least half of their neighbors in the opposite part.
	This was recently proved by Ferber, Kwan, Narayanan, Sah, and Sawhney \cite{FKNSS}, who also showed that with high probability, $G(n,1/2)$ has a bisection in which all but $o(n)$ vertices have at least half of their neighbors in their own part.
	More recently, further strengthening and advances related to this conjecture on bisections in random graphs were achieved by Dandi, Gamarnik, and Zdeborov\'a \cite{DGZ}, Minzer, Sah, and Sawhney \cite{MSS}, as well as Anastos, Cooley, Kang, and Kwan \cite{ACKK}.

	In this paper, we investigate bisections in general graphs under degree constraints.
	We develop a unified approach that extends to multi-partitions with arbitrary part-size constraints.
	Our results also reveal new partitioning phenomena that both generalize and strengthen the classical theorems of Erd\H{o}s, Thomassen, and K\"{u}hn and Osthus discussed above.

	\subsection{Bisections in general settings}
	\noindent
	We first present our main results on bisections in the most general settings.
	In particular, the external-degree statement below (Theorem~\ref{Thm:ext-bisec}) may be viewed as a first general step toward the Bollob\'as--Scott conjecture \eqref{conj:BS},
	since it guarantees for every graph a bisection in which each vertex sends a positive constant fraction of its incident edges across the cut.
	The following two results hold for all graphs.
	
	\begin{theorem}\label{Thm:int-bisec}
		Every graph $G$ has a bisection such that
		every vertex $v$ has at least $$\frac{d_G(v)}{4}-o(d_G(v)) \mbox{ ~ neighbors in its own part.}$$
	\end{theorem}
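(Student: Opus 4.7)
The plan is probabilistic and proceeds in two phases: first we find a (possibly unbalanced) bipartition in which every sufficiently high-degree vertex has close to half of its neighbors in its own part, and then we rebalance this bipartition into a bisection while losing at most a factor of two. Fix $\varepsilon > 0$. It suffices to produce a bisection in which every vertex $v$ with $d_G(v) \geq D_\varepsilon$ has at least $(1/4 - \varepsilon) d_G(v)$ neighbors in its own part, for some constant $D_\varepsilon$ depending only on $\varepsilon$; vertices of smaller degree can be absorbed into the $o(d_G(v))$ slack.

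\textbf{Phase 1 (an unbalanced ``good'' bipartition).} Take a uniformly random two-coloring $V(G) = X \sqcup Y$. For each vertex $v$, the number of own-color neighbors is a sum of independent indicator variables with mean close to $d_G(v)/2$, so Chernoff's inequality gives that the probability $v$ has fewer than $(1/2 - \varepsilon) d_G(v)$ own-color neighbors decays as $\exp(-c\varepsilon^2 d_G(v))$. A Lov\'asz Local Lemma argument --- exploiting that the bad event at $v$ depends only on the colors of $v$ and $N(v)$, hence only on vertices within distance $2$ of $v$ in $G$ --- yields a bipartition $(X,Y)$ in which every vertex with $d_G(v) \geq D_\varepsilon$ has at least $(1/2 - \varepsilon) d_G(v)$ own-part neighbors.

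\textbf{Phase 2 (rebalance and preserve).} Without loss of generality assume $|X| \geq n/2$, set $s = |X| - n/2$, and choose a uniformly random subset $S \subseteq X$ of size $s$. Define the bisection $(A, B) = (X \setminus S,\, Y \cup S)$. For $v \in X \setminus S$, its new own-part count equals $d_X(v) - |N(v) \cap S|$, whose expectation is $d_X(v)\cdot n/(2|X|) \geq d_X(v)/2 \geq (1/4 - \varepsilon/2)\, d_G(v)$, since $|X| \leq n$. A symmetric analysis for $v \in S$ moved to $Y$ shows that its new own-part count $d_Y(v) + |N(v) \cap (S \setminus \{v\})|$ is likewise at least $d_G(v)/4$ in expectation. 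Standard Chernoff/Hoeffding concentration for the random set $S$ promotes these in-expectation estimates to pointwise guarantees with an additive $o(d_G(v))$ loss. This is precisely where the factor-of-two gap between the random benchmark $(1/2)d_G(v)$ and the proven bound $(1/4)d_G(v)$ enters: when $|X|$ is as large as $n$, the fraction $s/|X|$ approaches $1/2$, so rebalancing can halve the own-part count.

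\textbf{Main obstacle.} The most delicate point is the LLL in Phase 1 when $G$ has unbounded maximum degree: the bad event at a moderate-degree vertex $v$ can depend on the colors of very-high-degree neighbors, and the symmetric LLL requires a dependency degree of at most $1/(ep_v)$, which may fail uniformly in $G$. I expect to overcome this either by an asymmetric LLL with weights tuned to individual failure probabilities, or by a two-round construction in which the colors of very-high-degree vertices are fixed first via direct concentration, leaving only bounded-dependency events for the LLL step. A secondary technical point in Phase 2 is that vertices $v \in S$ with atypically small $|N(v) \cap Y|$ must not drop below $d_G(v)/4$ after moving; these exceptional vertices can be excluded from $S$ by an alteration argument, with the discarded count absorbed into the $o(d_G(v))$ error term.
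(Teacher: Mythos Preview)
Your two-phase plan has a genuine gap that the suggested fixes do not repair. The Local Lemma step in Phase~1 breaks down for graphs with highly non-uniform degrees, and this is not a technicality. Concretely, suppose $v$ has degree $d$ (just above your threshold $D_\varepsilon$) and is adjacent to a vertex $w$ of degree $D \gg d$. The bad event $B_v$ depends on the colours of $N(v)$, hence on the colour of $w$; so $B_v$ is correlated with $B_u$ for every $u \in N(w)$. There can be $D$ such vertices, each also of degree about $d$, each with failure probability about $\exp(-c\varepsilon^2 d)$. Neither the symmetric nor the asymmetric LLL tolerates $D$ dependent events of fixed probability when $D$ is unbounded. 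Your proposed ``fix high-degree vertices first'' workaround is circular: wherever you set the degree cutoff, vertices just below it can still have neighbours just below it of arbitrarily high degree, reproducing the same dependency blow-up one level down. Moreover, even if some Phase-1 colouring existed, nothing in your argument controls $|X|$; the LLL output need not be close to balanced, and if $|X|$ is near $n$ then Phase~2 must solve the entire problem from scratch. Phase~2 itself has the identical defect: promoting the hypergeometric concentration to a simultaneous guarantee for all $v$ again requires a union bound or LLL over events whose dependency graph is governed by distance-2 neighbourhoods in $G$.

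The paper's proof does not try to make every vertex good by randomness alone. Instead, a random tripartition combined with Markov's inequality (no LLL, no union bound) controls the \emph{weighted} count $\sum_i i \cdot |\{\text{bad vertices of degree } i\}|$, showing it is $o(n)$; this is Lemma~\ref{Lem:indegree-tripar1}. The bad vertices are then repaired deterministically via Lemma~\ref{Lem:key}, which says that if the weighted count of sub-threshold vertices is small, one can delete a controlled set to obtain a subgraph in which every remaining vertex meets its target. This lemma is where the factor $1/2$ is lost (it is the ``average degree $2d$ gives a subgraph of minimum degree $d$'' phenomenon), not in any rebalancing step. A second deterministic pass (Lemma~\ref{Lem:indegree-tripar2}) relocates the deleted vertices and the small buffer part $C$ to restore the bisection while preserving the degree conditions. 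The essential idea you are missing is to abandon per-vertex randomised guarantees and instead bound a global weighted deficit that a greedy deletion/relocation procedure can then eliminate.
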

	
	\begin{theorem}\label{Thm:ext-bisec}
		Every graph $G$ has a bisection such that every vertex $v$ has at least
		$$\frac{d_G(v)}{4}-o(d_G(v)) \mbox{ ~ neighbors in its opposite part.}$$
	\end{theorem}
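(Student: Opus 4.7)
The plan is to construct the bisection via a uniformly random balanced partition and analyze via concentration, separately by degree regime. Let $V(G) = A \cup B$ be a uniformly random bisection, and for $v \in A$ write $X_v := |N(v) \cap B|$. Then $X_v$ is hypergeometrically distributed with mean close to $d_G(v)/2$, so the Chernoff--Hoeffding inequality yields
\[
\Pr\bigl[X_v < (1/4 - \delta)\,d_G(v)\bigr] \;\le\; \exp\bigl(-c(\delta)\,d_G(v)\bigr)
\]
for some $c(\delta) > 0$. Choosing $C_\delta$ large enough, every vertex with $d_G(v) \ge C_\delta \log n$ has failure probability at most $1/n^2$, and a union bound handles all such vertices simultaneously with high probability. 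For very low-degree vertices ($d_G(v) \le D_0$ for an absolute constant $D_0$), the quantity $d_G(v)/4 - o(d_G(v))$ can be made nonpositive by a suitable choice of the $o(\cdot)$ function, so no constraint is imposed on them.

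The core difficulty is the \emph{moderate-degree} regime $D_0 \le d_G(v) \le C_\delta \log n$, where the Chernoff bound is too weak for a direct union bound yet the claimed bound remains nontrivial. Here I would invoke a Lovász Local Lemma argument on the independent-random bipartition model: the bad event at $v$ depends only on the placements of vertices in $N(v) \cup \{v\}$, so the dependency graph has degree at most roughly $d_G(v) \cdot \Delta$. LLL then yields a bipartition satisfying the constraint whenever $d_G(v) \gtrsim \log \Delta$, covering much of the moderate range. To convert this bipartition into a bisection, one moves at most $O(\sqrt{n})$ vertices between the two parts, which perturbs each $X_v$ by at most $O(\sqrt{n})$; this perturbation is $o(d_G(v))$ whenever $d_G(v) \gg \sqrt{n}$.

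The main obstacle is handling those moderate-degree vertices where either LLL is inconclusive (because $\Delta$ is comparable to $n$) or the balancing perturbation exceeds the error tolerance. Here one can exploit a repair feature special to the \emph{external} version: flipping a violator $v$ to the opposite part automatically raises its external degree to $d_G(v) - X_v > (3/4)\,d_G(v)$. By pairing violators in $A$ with violators in $B$, so that the bisection balance is preserved, and swapping them simultaneously --- with the existence of such a pairing guaranteed by a first- or second-moment argument establishing the sparsity of bad events in $G$ --- one hopes to repair all violators without cascading new violations at borderline neighbors. The technically delicate heart of the proof is executing this sparsity-plus-pairing step while simultaneously preserving the exact bisection balance and the Chernoff guarantees at high-degree vertices; controlling the interplay among these three constraints is the hardest part of the argument I would anticipate.
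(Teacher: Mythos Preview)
Your proposal has a genuine gap in the moderate-degree regime, and the repair mechanisms you suggest do not close it.

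First, the LLL step does not gain you anything over the Chernoff-plus-union-bound regime in general graphs. The bad event at $v$ depends on the placement of $N(v)$, so two bad events are dependent whenever the corresponding vertices share a neighbor. If $G$ contains even a single vertex $u$ of degree $\Theta(n)$, then every vertex adjacent to $u$ has its bad event dependent on every other such vertex, and the dependency degree is $\Theta(n)$. LLL then needs $\exp(-c\,d_G(v))\cdot n \lesssim 1$, i.e.\ $d_G(v) \gtrsim \log n$, which is exactly the regime already covered by Chernoff and a union bound. So for a graph where most vertices have a fixed large degree $D$ and one hub of degree $n-1$, neither tool covers the degree-$D$ vertices, and there can be $\Theta(n\,e^{-cD})$ violators --- a linear number.

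Second, the swap-repair step is underspecified in a way that matters. You need the number of violators in $A$ and in $B$ to be equal (or close) to pair them; nothing in a first- or second-moment bound gives this. More seriously, swapping a violator $v$ changes the external degree of every neighbor of $v$ by $\pm 1$; when there are $\Theta(n)$ violators each of degree $D$, the swaps touch $\Theta(nD)$ incidences, and borderline vertices (those with external degree barely above $(1/4-\varepsilon)d$) can be pushed into violation. You acknowledge this cascade risk but offer no mechanism to control it. The same issue afflicts the balancing move: relocating $O(\sqrt n)$ vertices can wipe out the entire external degree of a moderate-degree vertex if its neighbors happen to lie among the moved vertices.

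The paper avoids all of this by never trying to maintain a bisection during the repair. It starts from a random \emph{tripartition} $X\cup Y\cup Z$ with $|Z|\approx 2\varepsilon n$ acting as a buffer, and proves the single global estimate $\sum_i i\cdot|\{\text{bad vertices of degree }i\}|\le \varepsilon^2 n$. The deterministic refinement then applies a greedy-deletion lemma (Lemma~\ref{Lem:key}) to the bipartite graph $(X,Y)$ to extract a large sub-bipartition meeting the degree constraint, pushes the deleted vertices and their $Z$-neighbors into a small waste set $W$, greedily reinserts from $W$ whatever already has enough neighbors on one side, and finally takes a max-cut of the residual $W$ to handle the rest. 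All of these moves are paid for by the single weighted sum above, so there is no cascade to track. The bisection is formed only at the very end, by splitting the buffer $Z$ between the two sides; since every vertex of $Z$ was already shown to have $\ge 2\psi(d)$ neighbors in each side, this last step costs nothing.
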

	
	From the perspective of random bisections, we observe that these results are asymptotically optimal up to a factor of $1/2$.
	More broadly, Theorems~\ref{Thm:int-bisec} and \ref{Thm:ext-bisec} are the first results concerning bisections in general graphs where the degree of every vertex is at least a constant fraction of its total degree.
	As a consequence, we establish the existence of the following function $f(k)$, which provides the bisection analogs of \eqref{equ:maxcut} and \eqref{equ:Thom}.
	
	\begin{corollary}\label{coro:main}
		There exists a function $f(k)=O(k)$ such that every graph with minimum degree at least $f(k)$ admits a bisection where every vertex has at least $k$ neighbors in its own part, as well as a bisection where every vertex has at least $k$ neighbors in the opposite part.
	\end{corollary}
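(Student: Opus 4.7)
The plan is to deduce Corollary~\ref{coro:main} directly from Theorems~\ref{Thm:int-bisec} and \ref{Thm:ext-bisec}, with essentially no additional work beyond correctly interpreting the $o(d_G(v))$ error term. I would begin by unpacking this notation: by definition, Theorem~\ref{Thm:int-bisec} provides a function $\epsilon:\mathbb{N}\to\mathbb{R}_{\geq 0}$ with $\epsilon(d)\to 0$ as $d\to\infty$, such that every graph admits a bisection in which each vertex $v$ has at least $\left(\tfrac{1}{4}-\epsilon(d_G(v))\right)d_G(v)$ neighbors in its own part; a symmetric statement follows from Theorem~\ref{Thm:ext-bisec} for the opposite part.

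Next, I would fix an absolute constant $D$, depending only on the error function $\epsilon$, such that $\epsilon(d)\leq \tfrac{1}{8}$ for all $d\geq D$, and set $f(k):=\max\{8k,\,D\}$. Clearly $f(k)\leq (8+D)k$ for every $k\geq 1$, so $f(k)=O(k)$ as required. Now for any graph $G$ with minimum degree at least $f(k)$, applying Theorem~\ref{Thm:int-bisec} yields a bisection in which every vertex $v$ satisfies $d_G(v)\geq f(k)\geq \max(8k,D)$, and hence has at least $\left(\tfrac{1}{4}-\epsilon(d_G(v))\right)d_G(v)\geq \tfrac{1}{8}d_G(v)\geq k$ neighbors in its own part. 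The same argument applied to Theorem~\ref{Thm:ext-bisec} produces a bisection in which every vertex has at least $k$ neighbors in the opposite part.

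There is no real obstacle in this step: the entire substance of the corollary is carried by the two bisection theorems, and once their asymptotic bounds are available, what remains is merely to choose $f(k)$ large enough so that every vertex lies in the regime where the $o(d_G(v))$ error is a sufficiently small fraction of $d_G(v)$. Any constant strictly less than $1/4$ in place of $1/8$ would work equally well, so the only quantitative parameter to optimize is the leading constant in $f(k)=O(k)$, which is inherited from the (non-explicit) rate in the $o(\cdot)$ terms of Theorems~\ref{Thm:int-bisec} and \ref{Thm:ext-bisec}.
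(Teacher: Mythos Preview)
Your proposal is correct and matches the paper's intended derivation: Corollary~\ref{coro:main} is presented in the paper as an immediate consequence of Theorems~\ref{Thm:int-bisec} and~\ref{Thm:ext-bisec}, and the paper's own proof of Theorem~\ref{Thm:int-bisec} unpacks the $o(d_G(v))$ term exactly as you do (for each $\varepsilon>0$ there is a threshold $i_0(\varepsilon)$ beyond which the bound $(\tfrac14-\varepsilon)d_G(v)$ holds). Your choice of $\varepsilon=\tfrac18$ and $f(k)=\max\{8k,D\}$ is precisely the kind of instantiation the paper has in mind; the only refinement the paper offers elsewhere is the explicit constant $(4+\varepsilon)k$ via Corollaries~\ref{coro:int-bisec} and~\ref{coro:ext-bisec}, but that is not needed for Corollary~\ref{coro:main}.
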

	
	\subsection{Partitions with internal degree constraints}\label{subsec:intro-int}
	\noindent
	Next, we consider partitioning graphs with high minimum degree by imposing {\it internal degree constraints}, that is, by restricting the degrees of vertices within their own parts.
	The following theorem (derived from a more technical setting - Theorem~\ref{Thm:min-indegree}) guarantees the existence of a specific tripartition under part-size and internal degree constraints.

	\begin{theorem}\label{Thm:int-bisec-exact}
		Let $c, \varepsilon$ be constants satisfying $0\leq c<1$ and $0<\varepsilon\leq 1-c.$
		Then there exists $k_0(c, \varepsilon)$ such that for every integer $k\geq k_0(c, \varepsilon)$,
		every graph $G$ with minimum degree at least $\left(\frac{4}{1-c}+\varepsilon\right)k$ admits a tripartition $V(G)=A\cup B\cup C$ such that
		\begin{itemize}
			\item[(1).] $\left(\frac{1-c-\varepsilon}{2}\right)n\leq |A|,|B|\leq \left(\frac{1-c}{2}\right)n$,
			\item[(2).] $d_{A}(x)\geq k$ holds for every vertex $x\in A$,
			\item[(3).] $d_{B}(y)\geq k$ holds for every vertex $y\in B$, and
			\item[(4).] both $d_{A}(z)\geq 2k$ and $d_{B}(z)\geq 2k$ hold for every vertex $z\in C$,
		\end{itemize}
	\end{theorem}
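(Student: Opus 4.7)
The plan is to obtain the tripartition via an extremal maximization argument, closely related to the likely derivation from the more technical Theorem~\ref{Thm:min-indegree} alluded to in the statement. Set $m := \lfloor (1-c)n/2 \rfloor$. Among all tripartitions $V(G) = A \cup B \cup C$ with $|A| = |B| = m$, select one maximizing the weighted objective
$$\Phi(A, B, C) \;:=\; e_G(A) + e_G(B) + \alpha \sum_{z \in C} \min\{d_A(z),\, d_B(z)\},$$
where $\alpha = \alpha(c, \varepsilon) > 0$ is a constant to be tuned. Condition (1) then holds automatically: the upper bound from $|A|=|B|=m \le (1-c)n/2$, and the lower bound because $m \ge \frac{(1-c)n}{2}-1 \ge \frac{1-c-\varepsilon}{2}n$ once $n$ is large, which is guaranteed by $k \ge k_0(c,\varepsilon)$ together with the minimum degree assumption.

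The heart of the argument is verifying (2)--(4) at the maximizer via swap arguments. If $x \in A$ violates $d_A(x) \ge k$, then $x$ has more than $\bigl(\tfrac{3+c}{1-c}+\varepsilon\bigr)k$ neighbors in $B \cup C$; an averaging argument over candidates $y \in B$ produces one for which the swap $x \leftrightarrow y$ strictly increases $e_G(A)+e_G(B)$ by an amount that dominates any loss in the $\alpha$-weighted term, contradicting maximality. A symmetric argument handles violations of (3). For (4), if some $z \in C$ has $d_A(z) < 2k$, then $z$ has more than $\tfrac{2(1+c)}{1-c}k + \varepsilon k$ neighbors outside $A$; averaging over $x' \in A$ yields a partner whose swap with $z$ increases $\Phi$. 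The use of $\min\{d_A(z), d_B(z)\}$ in $\Phi$ is essential, since it rewards precisely the symmetry that condition (4) requires, while leaving $e_G(A)+e_G(B)$ as the principal term so that (2) and (3) are enforced first.

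The main obstacle is calibrating $\alpha$ so that the three classes of swap arguments do not interfere: $\alpha$ must be small enough that $(A,B)$-swaps enforcing (2) and (3) always win, yet large enough that $C$-swaps enforce (4). The non-smoothness of the min function adds subtlety, since a single swap may change $\min\{d_A(z), d_B(z)\}$ for several $z \in C$ simultaneously. A cleaner alternative, likely matching the authors' derivation, is a two-stage construction: first produce a near-bisection $(A_0, B_0)$ with internal minimum degree at least $2k$ via a linear-bound K\"uhn--Osthus-type theorem such as the one in \cite{MW}; then greedily transfer vertices $z$ with $d_{A_0}(z), d_{B_0}(z) \ge 2k$ into a new set $C$ until $|A|=|B|=m$. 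The $2k$ buffer on the internal degrees of $A_0$ and $B_0$ is the key to preserving (2) and (3) throughout the transfer, while condition (4) is baked in by the selection rule.
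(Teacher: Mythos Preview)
Your proposal diverges from the paper's route, which obtains Theorem~\ref{Thm:int-bisec-exact} as a quick corollary of the technical Theorem~\ref{Thm:min-indegree}: one sets $\varepsilon' = (1-c)^2\varepsilon/40$, applies Theorem~\ref{Thm:min-indegree} with parameters $(c,\varepsilon')$, and checks that $\varphi_{c,\varepsilon'}(i) \geq k$ whenever $i \geq \delta(G) \geq \bigl(\tfrac{4}{1-c}+\varepsilon\bigr)k$. The substance lies entirely in Theorem~\ref{Thm:min-indegree}, proved by a random tripartition (Lemma~\ref{Lem:indegree-tripar1}) followed by two rounds of deterministic vertex relocation (Lemma~\ref{Lem:indegree-tripar2}), both resting on the dense-subgraph Lemma~\ref{Lem:key}. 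There is no extremal or swap argument anywhere in the paper's proof.

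Both of your suggested alternatives have real gaps. In the extremal approach, the swap for condition~(2) is not justified: if $x\in A$ has $d_A(x)<k$, the minimum-degree hypothesis only forces many neighbors of $x$ in $B\cup C$, not in $B$. The change in $e_G(A)+e_G(B)$ under the swap $x\leftrightarrow y$ equals $(d_B(x)-d_A(x))+(d_A(y)-d_B(y))-2\cdot\mathbf{1}_{xy\in E(G)}$, and nothing guarantees a $y\in B$ with $d_A(y)\geq d_B(y)$; your ``averaging argument'' is neither stated nor clearly available. Separately, a single swap can shift the $\alpha$-term by as much as $\alpha\bigl(d_C(x)+d_C(y)\bigr)$, which is unbounded in terms of $k$ for high-degree vertices, so no fixed $\alpha>0$ makes the principal term dominate uniformly. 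In the two-stage alternative, the linear-minimum-degree partition of~\cite{MW} gives no control whatsoever on $|A_0|$ and $|B_0|$, so ``near-bisection'' is not a valid starting assumption. Even if it were, your transfer rule requires a supply of vertices $z$ with \emph{both} $d_{A_0}(z)\geq 2k$ and $d_{B_0}(z)\geq 2k$, and none is guaranteed; moreover a ``$2k$ buffer'' does not survive removal of a large set $S$, since a remaining vertex retains only $d_{A_0}(x)-d_S(x)$ internal neighbors and $d_S(x)$ is uncontrolled.
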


	Using this theorem, we can immediately derive corresponding results on bisections.
	In the coming corollary, we apply Theorem~\ref{Thm:int-bisec-exact} with $1 - c = O(\varepsilon)$ and distribute the vertices of $C$ arbitrarily between $A$ and $B$ to form a bisection.
	
	\begin{corollary}\label{coro:int-bisec-main}
		For any positive constant $\varepsilon$, there exists a constant $c_\varepsilon>0$ such that the following holds.
		For any $k\geq 1$, every graph $G$ with minimum degree at least $c_\varepsilon\cdot k$ admits a bisection such that
		\begin{itemize}
			\item[(i).] every vertex has at least $k$ neighbors in its own part, and
			\item[(ii).] at least $(1-\varepsilon)|V(G)|$ vertices in $G$ have at least $k$ neighbors in the opposite part.
		\end{itemize}
	\end{corollary}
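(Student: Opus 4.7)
\textbf{Proof proposal for Corollary~\ref{coro:int-bisec-main}.} The idea, as hinted in the paragraph preceding the corollary, is to apply Theorem~\ref{Thm:int-bisec-exact} with the parameter $c$ chosen so close to $1$ that the parts $A$ and $B$ together already contain fewer than $\varepsilon n$ vertices; then any distribution of the vertices of $C$ into the two sides produces a valid bisection, with the set $C$ automatically supplying both the internal and the external degree requirements.

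Concretely, given $\varepsilon>0$, I would set $c:=1-\varepsilon/2$ and $\varepsilon':=\varepsilon/2$ (both admissible, since $0\le c<1$ and $0<\varepsilon'\le 1-c$), and take $c_\varepsilon:=(8/\varepsilon+\varepsilon/2)\cdot\max\{1,\,k_0(c,\varepsilon')\}$. For $k\ge k_0(c,\varepsilon')$ the hypothesis $\delta(G)\ge c_\varepsilon k\ge(4/(1-c)+\varepsilon')k$ lets us apply Theorem~\ref{Thm:int-bisec-exact} directly; for $k<k_0(c,\varepsilon')$ we instead apply it with $k_0(c,\varepsilon')$ in place of $k$ (the inflated value of $c_\varepsilon$ guarantees the degree hypothesis), which only strengthens the conclusion. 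Either way we obtain a tripartition $V(G)=A\cup B\cup C$ with $|A|,|B|\le(1-c)n/2=\varepsilon n/4$, hence $|A|+|B|\le\varepsilon n/2$ and $|C|\ge(1-\varepsilon/2)n$.

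Since $\bigl||A|-|B|\bigr|\le(1-c)n/2\le|C|$, I can partition $C=C_A\cup C_B$ so that $A':=A\cup C_A$ and $B':=B\cup C_B$ differ in size by at most one, giving a bisection. Condition~(i) holds because every $v\in A$ satisfies $d_{A'}(v)\ge d_A(v)\ge k$ by property~(2), symmetrically for $v\in B$, and every $z\in C$ assigned to $A$ (resp.\ to $B$) retains $d_{A'}(z)\ge d_A(z)\ge 2k$ (resp.\ $d_{B'}(z)\ge 2k$) by property~(4). For condition~(ii), every $z\in C_A$ already satisfies $d_{B'}(z)\ge d_B(z)\ge 2k\ge k$ by property~(4), and symmetrically for $z\in C_B$; thus all $|C|\ge(1-\varepsilon/2)n$ vertices of $C$ satisfy the external requirement. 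The only vertices that could potentially fail (ii) lie in $A\cup B$, and $|A\cup B|\le\varepsilon n/2\le\varepsilon n$, so at least $(1-\varepsilon)n$ vertices of $G$ meet condition~(ii).

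I expect no substantive obstacle: the argument is a direct reduction to Theorem~\ref{Thm:int-bisec-exact}. The main conceptual point is to recognize that choosing $c$ close to $1$ (rather than small) makes $A\cup B$ asymptotically negligible, so the weak condition (ii) tolerates \emph{all} of $A\cup B$ being ``bad'' while the strong condition~(i) is preserved because it was already guaranteed inside $A$ and $B$ and because the $C$-vertices carry $2k$ into each side. The only bookkeeping is rounding to balance the bisection and absorbing the small-$k$ regime into the constant $c_\varepsilon$.
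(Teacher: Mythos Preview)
Your proposal is correct and follows essentially the same approach as the paper: apply Theorem~\ref{Thm:int-bisec-exact} with $c$ close to $1$ (the paper uses $c=1-\varepsilon$, you use $c=1-\varepsilon/2$) so that $A\cup B$ is negligibly small, then redistribute $C$ to form the bisection. Your version is slightly more explicit about absorbing the small-$k$ regime into $c_\varepsilon$, but the argument is the same.
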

	
	We previously noted that it is impossible to ensure that all vertices satisfy both \eqref{equ:maxcut} and \eqref{equ:Thom} simultaneously, even for graphs $G$ with arbitrarily large minimum degree (as shown by counterexamples in \cite{KO}).
	However, this corollary indicates that a slightly weaker statement holds true: there exists a bisection in $G$ such that all vertices satisfy \eqref{equ:Thom}, while all but $o(|V(G)|)$ vertices satisfy \eqref{equ:maxcut}.
	Additionally, we observe that condition (ii) automatically provides a lower bound on the average degree of the bipartite subgraph induced by the bisection.
	Thus, this corollary directly extends the result \eqref{equ:KO2} of K\"uhn and Osthus \cite{KO}.
	
	In the next corollary, we establish explicit linear constants for the minimum degree conditions that guarantee bisections with desired internal degrees as well as average degree of the cut.
	In particular, this provides quantitative improvements to the aforementioned result of K\"uhn and Osthus \cite{KO}.
	
	\begin{corollary}\label{coro:int-bisec}
		For any positive constant $\varepsilon$, there exists $k_0(\varepsilon)$ such that the following holds for all integers $k \ge k_0(\varepsilon)$.
		Every graph $G$ with minimum degree at least $(4 + \varepsilon)k$ admits a bisection $V(G) = A \cup B$ such that
		both $G[A]$ and $G[B]$ have minimum degree at least $k$.
		Moreover, if the minimum degree of $G$ is at least $\left(\frac{16}{3} + \varepsilon\right)k$, then in addition to the above, the bipartite subgraph $(A, B)_G$ has average degree at least $k$.
	\end{corollary}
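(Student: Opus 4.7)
Both statements should follow quickly from Theorem~\ref{Thm:int-bisec-exact} by choosing the parameter $c$ appropriately and then redistributing the middle part $C$ arbitrarily to turn the resulting tripartition into a bisection. The key structural observation is that the bound $d_A(z),d_B(z)\ge 2k$ for $z\in C$ is stronger by a factor of two than what is needed for the internal-degree condition on $A$ and $B$, and this slack is exactly what lets one reassign $C$ without tracking anything and still inherit both the internal and the cut degree bounds.

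For the first statement I plan to apply Theorem~\ref{Thm:int-bisec-exact} with $c=0$ and the given $\varepsilon$, whose hypothesis becomes exactly $\delta(G)\ge(4+\varepsilon)k$. This yields a tripartition $V(G)=A\cup B\cup C$ with $|A|,|B|\le n/2$ and the four internal-degree conditions of Theorem~\ref{Thm:int-bisec-exact}. I would then split $C$ arbitrarily into $C_A\cup C_B$ with $|C_A|=\lceil n/2\rceil-|A|$ and $|C_B|=\lfloor n/2\rfloor-|B|$ (feasible since $|C_A|+|C_B|=|C|$ and both sides are nonnegative because $|A|,|B|\le n/2$), and set $A'=A\cup C_A$, $B'=B\cup C_B$. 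This is a bisection, and every vertex $v\in A'$ satisfies $d_{A'}(v)\ge d_A(v)\ge k$ (the bound is $\ge k$ if $v\in A$ and $\ge 2k$ if $v\in C_A$) since adding vertices to $A$ only increases internal degrees; the symmetric statement holds in $B'$.

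For the moreover part, the coefficient $16/3=4/(1-1/4)$ suggests taking $c=1/4$ in Theorem~\ref{Thm:int-bisec-exact}, under which the hypothesis becomes exactly $\delta(G)\ge(16/3+\varepsilon)k$. The resulting tripartition now has $|A|,|B|\le 3n/8$ and hence $|C|\ge n/4$. Forming $A'=A\cup C_A$ and $B'=B\cup C_B$ exactly as above inherits the internal-degree conditions; to control the cut I would discard all but the edges from $C$ to the opposite original side and compute
\begin{equation*}
e(A',B')\;\ge\;e(A,C_B)+e(C_A,B)\;=\;\sum_{z\in C_B}d_A(z)+\sum_{z\in C_A}d_B(z)\;\ge\;2k\,|C|\;\ge\;\tfrac{kn}{2},
\end{equation*}
so the bipartite subgraph $(A',B')_G$ has average degree $2e(A',B')/n\ge k$, as required.

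There is essentially no conceptual obstacle: Theorem~\ref{Thm:int-bisec-exact} was tailored precisely so that both conclusions become formal consequences once $c$ is chosen. The only bookkeeping is to check that the prescribed sizes of $C_A$ and $C_B$ produce a legal bisection (using $|A|,|B|\le(1-c)n/2\le n/2$ and $|A|+|B|+|C|=n$) and to absorb parity issues when $n$ is odd into the threshold $k_0(\varepsilon)$; both are harmless for large $k$. Taking $k_0(\varepsilon)=\max\{k_0(0,\varepsilon),\,k_0(1/4,\varepsilon)\}$ from Theorem~\ref{Thm:int-bisec-exact} handles the range of $k$.
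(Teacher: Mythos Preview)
Your proposal is correct and matches the paper's proof essentially line for line: the paper also applies Theorem~\ref{Thm:int-bisec-exact} with $c=0$ for the first assertion and with $c=\tfrac14$ for the moreover part, redistributes $C$ into $A$ and $B$ to form a bisection, and obtains the cut bound via $e(A',B')\ge 2k|C|\ge kn/2$ from condition~(4). The only cosmetic difference is that the paper states the edge count in one line without writing out the intermediate sum $\sum_{z\in C_B}d_A(z)+\sum_{z\in C_A}d_B(z)$.
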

	
	\subsection{Partitions with external degree constraints}\label{subsec:intro-ext}
	\noindent
	Now we consider partitioning graphs with high minimum degree by imposing {\it external degree constraints},
	meaning we restrict the degrees of vertices within their opposite parts.
	The following is the main result of this subsection.
	
	\begin{theorem}\label{Thm:ext-bisec-exact}
		Let $c, \varepsilon$ be constants satisfying $0\leq c<1$ and $0<\varepsilon\leq 1-c.$
		Then there exists $k_0(c, \varepsilon)$ such that for every integer $k\geq k_0(c, \varepsilon)$,
		every graph $G$ with minimum degree at least $\left(\frac{4}{1-c}+\varepsilon\right)k$ admits a tripartition $V(G)=A\cup B\cup C$ such that
		\begin{itemize}
			\item[(1).] $\left(\frac{1-c-\varepsilon}{2}\right)n\leq |A|,|B|\leq \left(\frac{1-c}{2}\right)n$,
			\item[(2).] $d_{(A,B)}(v)\geq k$ holds for every vertex $v\in A\cup B$, and
			\item[(3).] both $d_A(z)\geq 2k$ and $d_B(z)\geq 2k$ hold for every vertex $z\in C$.
		\end{itemize}
	\end{theorem}
	
	Using this, we can derive the following corollaries in a manner similar to the previous subsection concerning internal degree constraints.
	
	\begin{corollary}\label{coro:ext-bisec-main}
		For any positive constant $\varepsilon$, there exists a constant $c_\varepsilon>0$ such that the following holds.
		For any $k\geq 1$, every graph $G$ with minimum degree at least $c_\varepsilon\cdot k$ admits a bisection such that
		\begin{itemize}
			\item[(i).] every vertex has at least $k$ neighbors in the opposite part, and
			\item[(ii).] at least $(1-\varepsilon)|V(G)|$ vertices in $G$ have at least $k$ neighbors in their own part.
		\end{itemize}
	\end{corollary}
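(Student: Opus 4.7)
The plan is to mirror the derivation of Corollary~\ref{coro:int-bisec-main} sketched just before its statement, substituting Theorem~\ref{Thm:ext-bisec-exact} for Theorem~\ref{Thm:int-bisec-exact}. Given $\varepsilon > 0$, I set $c := 1 - \varepsilon/2$ together with the auxiliary parameter $\varepsilon' := \varepsilon/2$, and take $c_\varepsilon := \tfrac{4}{1-c} + \varepsilon' = \tfrac{8}{\varepsilon} + \tfrac{\varepsilon}{2}$, enlarged if necessary to absorb the threshold $k_0(c,\varepsilon')$ so that the conclusion holds for all $k \ge 1$ (equivalently, apply the theorem with $k$ replaced by $\max\{k, k_0(c,\varepsilon')\}$, which only strengthens both conclusions). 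Writing $n := |V(G)|$, Theorem~\ref{Thm:ext-bisec-exact} then yields a tripartition $V(G) = A \cup B \cup C$ in which $|A|,|B| \le (1-c)n/2 = \varepsilon n/4$ (so $|C| \ge cn = (1 - \varepsilon/2)n$), every $v \in A \cup B$ satisfies $d_{(A,B)}(v) \ge k$, and every $z \in C$ satisfies $\min\{d_A(z), d_B(z)\} \ge 2k$.

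Since $|A|, |B| \le \varepsilon n / 4$, there is ample room inside $C$ to equalize the two sides. I arbitrarily partition $C = C_A \cup C_B$ so that the sets $A' := A \cup C_A$ and $B' := B \cup C_B$ satisfy $\big||A'| - |B'|\big| \le 1$; this is the required bisection.

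For (i), each $v \in A$ retains at least $d_B(v) = d_{(A,B)}(v) \ge k$ neighbors in the opposite part $B' \supseteq B$, and symmetrically for $v \in B$; each $z \in C_A$ has at least $d_B(z) \ge 2k$ neighbors in $B' \supseteq B$, and each $z \in C_B$ has at least $d_A(z) \ge 2k$ neighbors in $A' \supseteq A$. For (ii), each $z \in C_A$ has at least $d_A(z) \ge 2k \ge k$ neighbors in its own part $A' \supseteq A$, and likewise for $z \in C_B$, so at least $|C| \ge (1 - \varepsilon/2)n \ge (1 - \varepsilon)n$ vertices meet the internal-degree requirement, as demanded.

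The argument has no substantive obstacle; it is a short bookkeeping verification exploiting two elementary monotonicities after the redistribution of $C$: the opposite part of any original $A$- or $B$-vertex only grows, and each relocated $z \in C$ still lies in a part that contains the entire $A$- or $B$-neighborhood guaranteed by the tripartition. The only mild technicality is the small-$k$ regime below $k_0(c,\varepsilon')$, absorbed into the choice of $c_\varepsilon$ as noted above.
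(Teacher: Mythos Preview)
Your proof is correct and follows essentially the same approach as the paper: the paper states that Corollary~\ref{coro:ext-bisec-main} is proved by mirroring the argument for Corollary~\ref{coro:int-bisec-main} with Theorem~\ref{Thm:ext-bisec-exact} in place of Theorem~\ref{Thm:int-bisec-exact}, which is exactly what you do. The only cosmetic differences are your choice of $c=1-\varepsilon/2$ (the paper uses $c=1-\varepsilon$, yielding a slightly smaller $c_\varepsilon$) and your more explicit handling of the small-$k$ regime; neither affects the substance.
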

	
	This offers a complementary statement to Corollary~\ref{coro:int-bisec-main}:
	every graph $G$ with large minimum degree has a bisection such that all vertices satisfy \eqref{equ:maxcut}, while all but $o(|V(G)|)$ vertices satisfy \eqref{equ:Thom}.
	
	\begin{corollary}\label{coro:ext-bisec}
		For any positive constant $\varepsilon$, there exists $k_0(\varepsilon)$ such that the following holds for all integers $k \ge k_0(\varepsilon)$.
		Every graph $G$ with minimum degree at least $(4 + \varepsilon)k$ admits a bisection $V(G) = A \cup B$ such that
		the bipartite subgraph $(A,B)_G$ has minimum degree at least $k$.
        Moreover, if the minimum degree of $G$ is at least $\left(\frac{16}{3} + \varepsilon\right)k$, then in addition to the above, both $G[A]$ and $G[B]$ have average degree at least $k$.
	\end{corollary}

	We suspect that both the $(4+\varepsilon)k$ bounds in Corollaries \ref{coro:int-bisec} and \ref{coro:ext-bisec} are within a factor of 2 of being optimal (see the discussion in Section~\ref{sec:conclusion}).
	
	The partition results with external degree constraints may appear analogous to those with internal constraints in Subsection~\ref{subsec:intro-int}. Indeed, as in the internal case, their proofs share the same two-stage framework: an initial random partitioning step followed by a deterministic refinement step. However, we emphasize that the deterministic refinement step requires substantially different approaches for these two settings.
	For readers interested in the technical details, we provide proof outlines before the main lemmas in Sections~\ref{sec:int} and \ref{sec:ext} (for internal and external degree constraints, respectively), where both the shared framework and the key differences are discussed.
	
	\bigskip
	
	\noindent{\bf Organization.}
	The remainder of this paper is organized as follows.
	In Section~\ref{sec:pre}, we present the preliminaries, including definitions, probabilistic inequalities,
	and an important lemma for finding certain dense subgraphs that is crucial for both settings of degree constraints.
	In Section~\ref{sec:int}, we prove our main results on graph partitions and bisections with internal degree constraints.
	The analogous results for external degree constraints are established in Section~\ref{sec:ext}.
	We conclude with a discussion of several remarks and open problems in the final section.
	
	\medskip
	
	\noindent{\bf Proof overview.}
	Our proofs combine a random starting partition with a deterministic cleaning procedure.
	We first choose a random tripartition with prescribed expected part sizes.
	For most vertices, Chernoff-type estimates show that the desired degree inequalities already hold with some slack.
	Lemma~\ref{Lem:exist C} then bounds the weighted number of vertices for which this random step fails, and this is exactly the quantity that controls all later vertex relocations.
	Next, Lemma~\ref{Lem:key} is used to identify a large robust set of vertices whose degree condition is already permanent.
	The deterministic stage then repeatedly moves a bad vertex, together with a controlled set of its neighbors, while the weighted estimate from Lemma~\ref{Lem:exist C} guarantees that only a small proportion of vertices are affected.
	A final redistribution of the remaining exceptional vertices yields the required tripartition and hence the desired bisection.

	\section{Preliminaries}\label{sec:pre}
	\noindent In this section, we gather and prepare all necessary preliminaries for the proofs that follow.
	
	\subsection{Basic notions}
	\noindent
	We adopt standard graph theory terminology. Let $G$ be a finite simple graph.
	For each vertex $v \in V(G)$, its \emph{neighborhood} $N_G(v)$ consists of all vertices adjacent to $v$,
	and its \emph{degree} is $d_G(v) = |N_G(v)|$. For any subset $A \subseteq V(G)$,
	we write $d_A(v) = |N_G(v) \cap A|$ for the number of neighbors of $v$ in $A$.
	For any integer $r \geq 2$, an {\it $r$-partition} of $G$ is a partition $V(G) = V_1 \cup V_2 \cup \cdots \cup V_r$ into $r$ non-empty parts.
	When $r=2$, we refer to this as a {\it bipartition}.
	Given a bipartition $V(G) = A \cup B$, the notation $(A,B)_G$ denotes both the edge cut
	(i.e., the set of all edges between $A$ and $B$) and the corresponding bipartite subgraph of $G$.
	When it is clear from the context, we often omit the subscript $G$ from the above notations.
	A {\it bisection} of $G$ is a bipartition $V(G) = A \cup B$ satisfying $\big| |A|-|B|\big|\leq 1$.
	For integers $n\geq \ell\geq 1$, we write $[n]=\{1,2,...,n\}$ and $\binom{[n]}{\ell}=\{F\subseteq [n]: |F|=\ell\}$.
	

	\subsection{Probabilistic inequalities}
	\noindent
	In this subsection, we collect several inequalities required for estimating graph-theoretic parameters during the initial random partitioning stage of our main results.
	First, we recall {\it Markov's inequality}.
	
	\begin{lemma}[see \cite{AS}]\label{Lem:Markov}
		Let $X$ be a nonnegative random variable and $\lambda > 0$. Then $\Prob(X\ge \lambda)\leq \E[X]/\lambda$.
	\end{lemma}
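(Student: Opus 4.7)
The plan is to give the standard one-line indicator proof of Markov's inequality. The key observation is that because $X$ is nonnegative, the event $\{X \ge \lambda\}$ can be captured by a pointwise inequality of random variables, which then yields the bound after taking expectations and using monotonicity.

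First, I would introduce the indicator random variable $Y = \mathbf{1}_{\{X \ge \lambda\}}$, so that $Y$ equals $1$ on the event $\{X \ge \lambda\}$ and $0$ otherwise. I would then verify the pointwise inequality $X \ge \lambda \cdot Y$, by splitting into two cases: on $\{X \ge \lambda\}$ we have $\lambda Y = \lambda \le X$; on its complement we have $\lambda Y = 0 \le X$, where the last inequality uses the hypothesis that $X$ is nonnegative. (This nonnegativity is the only place where the assumption on $X$ is used, and skipping this case check is the one place where a careless writeup might slip.)

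Next, I would take expectations on both sides of $X \ge \lambda Y$. By monotonicity and linearity of expectation, this yields $\E[X] \ge \lambda \cdot \E[Y] = \lambda \cdot \Prob(X \ge \lambda)$. Dividing by $\lambda > 0$ gives the desired conclusion $\Prob(X \ge \lambda) \le \E[X]/\lambda$.

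There is no real obstacle here; this is a textbook inequality, and the only subtlety is remembering to invoke nonnegativity of $X$ when establishing the pointwise bound $X \ge \lambda \mathbf{1}_{\{X \ge \lambda\}}$. Since the paper cites \cite{AS} for the statement, I would expect the authors to omit the proof entirely and simply quote it for use in the random partitioning arguments of Sections~\ref{sec:int} and~\ref{sec:ext}.
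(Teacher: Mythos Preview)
Your proof is correct and is the standard indicator argument for Markov's inequality. As you anticipated, the paper does not prove this lemma at all; it simply cites \cite{AS} and uses the inequality as a black box in the random partitioning step (specifically in \eqref{equ:mark1}).
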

	
	We will also require the following {\it Chernoff bound} for binomial distributions. Let $\exp(x)=e^x$.
	
	\begin{lemma}[see \cite{AS}]\label{Lem:Chernoff}
		Let $p\in (0,1)$ and $X\sim \Bin(N,p)$. For any $\lambda > 0$, it holds that
		$$\Prob(|X-pN|>\lambda pN) \leq 2 \exp(-\lambda^2pN/3).$$
	\end{lemma}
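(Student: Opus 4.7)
The plan is to use the standard exponential-moment (Chernoff--Cram\'er) template, since Lemma~\ref{Lem:Chernoff} is the classical two-sided Chernoff bound for the binomial. I would write $X = \sum_{i=1}^{N} X_i$ as a sum of i.i.d.\ Bernoulli$(p)$ indicators and split the two-sided probability as
\[
\Prob\bigl(|X - pN| > \lambda pN\bigr) \;\leq\; \Prob\bigl(X \geq (1+\lambda)pN\bigr) + \Prob\bigl(X \leq (1-\lambda)pN\bigr),
\]
from which the factor of $2$ in the final bound will arise after each one-sided tail is controlled by $\exp(-\lambda^{2}pN/3)$.

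Next, for the upper tail I would fix $t>0$ and apply Markov's inequality (Lemma~\ref{Lem:Markov}) to the nonnegative random variable $e^{tX}$, then factorise its expectation by independence:
\[
\Prob\bigl(X \geq (1+\lambda)pN\bigr) \;\leq\; e^{-t(1+\lambda)pN}\,\E[e^{tX}] \;=\; e^{-t(1+\lambda)pN}\,\bigl(1-p+pe^{t}\bigr)^{N}.
\]
Using the elementary inequality $1+x \leq e^{x}$, the moment-generating factor is at most $\exp\bigl(Np(e^{t}-1)\bigr)$, so the whole bound becomes $\exp\bigl(Np(e^{t}-1)-t(1+\lambda)pN\bigr)$. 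Optimising in $t$ -- the minimiser is $t=\ln(1+\lambda)$ -- yields the classical rate-function estimate $\exp\bigl(-pN\cdot\varphi(\lambda)\bigr)$ with $\varphi(\lambda)=(1+\lambda)\ln(1+\lambda)-\lambda$. The lower tail is handled symmetrically by choosing $t<0$, producing the analogous bound $\exp\bigl(-pN\cdot\psi(\lambda)\bigr)$ with $\psi(\lambda)=(1-\lambda)\ln(1-\lambda)+\lambda$ (which is vacuous once $\lambda\geq 1$, since $X\geq 0$).

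The only genuinely delicate step -- and hence the main obstacle -- is passing from these sharp but awkward rate functions to the uniform bound $\exp(-\lambda^{2}pN/3)$. I would verify the numerical inequalities
\[
(1+\lambda)\ln(1+\lambda) - \lambda \;\geq\; \frac{\lambda^{2}}{3} \quad\text{and}\quad (1-\lambda)\ln(1-\lambda) + \lambda \;\geq\; \frac{\lambda^{2}}{3}
\]
in the relevant range of $\lambda$ by a short Taylor expansion around $\lambda = 0$, checking monotonicity of the differences where necessary; the constant $1/3$ is chosen precisely so that a single exponent accommodates both tails simultaneously (the naive quadratic coefficient from Taylor expansion is $1/2$, but the upper tail forces one to weaken this to $1/3$). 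Combining the two one-sided estimates then gives the stated $2\exp(-\lambda^{2}pN/3)$, completing the proof.
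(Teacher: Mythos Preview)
The paper offers no proof of Lemma~\ref{Lem:Chernoff}; it is simply quoted from Alon--Spencer. Your exponential-moment derivation is exactly the standard textbook argument one finds in that reference, so in that sense your approach coincides with what the paper implicitly relies on.

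One genuine caution, however: the upper-tail rate inequality
\[
(1+\lambda)\ln(1+\lambda)-\lambda \;\geq\; \frac{\lambda^{2}}{3}
\]
that you propose to verify ``in the relevant range of $\lambda$ by a short Taylor expansion'' does \emph{not} hold for all $\lambda>0$. The left-hand side grows like $\lambda\ln\lambda$ while the right-hand side grows like $\lambda^{2}$, and the inequality already fails at $\lambda=2$ (where $3\ln 3 - 2 \approx 1.296 < 4/3$). Consequently the bound $\exp(-\lambda^{2}pN/3)$ cannot be obtained for the upper tail uniformly in $\lambda>0$ by this route---and indeed the lemma as literally stated, with no restriction on $\lambda$, is a mild overstatement common in the literature; the correct uniform exponent for general $\lambda$ is $\lambda^{2}pN/(2+\lambda)$ or $\min(\lambda,\lambda^{2})pN/3$. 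This is not a flaw in your strategy so much as in the target statement. In every place the paper actually invokes Lemma~\ref{Lem:Chernoff} (see \eqref{equ:chern1} and \eqref{equ:pi}), either $\lambda<1$ or only the lower tail is used, and in that regime your Taylor-expansion verification goes through cleanly. You should simply make the restriction $0<\lambda\leq 1$ explicit when carrying out that step.
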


	The following inequality bounds a key quantitative measure in our analysis.
	Intuitively, it controls how many vertices and edges can be affected when ``trouble'' vertices - those exhibiting large degree deviations from the initial random bipartition - are relocated during subsequent deterministic operations.
	Recall the {\it gamma function} $\Gamma(t)=\int_{0}^{+\infty} x^{t-1} e^{-x} dx$.

	\begin{lemma}\label{Lem:exist C}
		For any $\varepsilon\in (0,1)$ and any $\rho\in (0,1]$, by setting
		$d_{\varepsilon,\rho}:=\frac{1000}{\sqrt{\rho}\cdot \varepsilon^2},$
		we have
		$$\sum_{i=1}^{+\infty} i\cdot \exp(-d_{\varepsilon,\rho}^2i^{\varepsilon})\leq \frac{\rho\varepsilon^2}{10^5}.$$
		In particular, one may take $d_\varepsilon:=d_{\varepsilon,1}=1000/\varepsilon^2$.
	\end{lemma}
	
	\begin{proof}
		Fix $\varepsilon\in (0,1)$ and $\rho\in (0,1]$.
		The proof requires the following upper bound for the gamma function:
		\begin{equation}\label{equ:Gamma}
			\Gamma(t)\leq 3t^{t-1/2} e^{-t}\leq 3 t^t \mbox{ for } t\geq 2,
		\end{equation}
		where the first inequality can be derived from Stirling's formula.
		Using \eqref{equ:Gamma}, we obtain
		$$\sum_{i=1}^{+\infty} i\cdot \exp(-d_{\varepsilon,\rho}^2i^{\varepsilon})\leq \int_{0}^{+\infty} x\cdot \exp(-d_{\varepsilon,\rho}^2 x^{\varepsilon})dx=
		\frac{\Gamma(2/\varepsilon)}{\varepsilon\cdot d_{\varepsilon,\rho}^{4/\varepsilon}}\leq \frac{3}{\varepsilon}\cdot \left(\frac{2}{d_{\varepsilon,\rho}^2\varepsilon}\right)^{2/\varepsilon}
		\leq \frac{6}{d_{\varepsilon,\rho}^2\varepsilon^2}\leq \frac{\rho\varepsilon^2}{10^5},$$
		where the equality is obtained by substituting $d_{\varepsilon,\rho}^2 x^{\varepsilon}$ as a new variable,
		the second last inequality uses the fact $2/\varepsilon\geq 1$,
		and the final inequality follows from the choice $d_{\varepsilon,\rho}=1000/(\sqrt{\rho}\cdot \varepsilon^2)$.
		This proves the lemma.
	\end{proof}

	\subsection{Key lemma for finding dense subgraphs}
	\noindent We now introduce a key lemma that guarantees the existence of an induced subgraph with all vertices satisfying the prescribed degree conditions in a general graph.
	At its core, this result generalizes the classical result that every graph with average degree at least $2d$ contains a non-empty induced subgraph of minimum degree at least $d$.\footnote{It is worth noting that this bound is tight, as demonstrated by the complete bipartite graph $K_{d,n}$.}
	We emphasize that this lemma presents the decisive technical barrier that prevents improvements to the halfway bounds
	in our main theorems (Theorems~\ref{Thm:int-bisec} and \ref{Thm:ext-bisec}).
	
	\begin{lemma}\label{Lem:key}
		Let $H$ be a graph with a family $\{A_i\}_{i \in I}$ of disjoint vertex subsets,\footnote{Here, $\bigcup_{i \in I} A_i$ may not equal $V(H)$.} where $I$ is a finite index set.
		For each $i \in I$, let $\eta_i > 0$ be a real number and let $a_i \geq 1$ be an integer. Define the subset
		\[
		A_i^+ = \{v \in A_i : d_H(v) \geq 2(1+\eta_i)a_i\}.
		\]
		and set $\eta = \min_{i \in I} \eta_i$. If
		\begin{equation}\label{equ:key}
			\left(1 + \frac{1}{\eta}\right) \sum_{i \in I} a_i |A_i \setminus A_i^+| < |V(H)|,
		\end{equation}
		then there exists a non-empty induced subgraph $H' \subseteq H$ such that
		\begin{itemize}
			\item [(a)] $d_{H'}(v) \geq a_i$ for all $v \in V(H') \cap A_i$ and $i\in I$;
			\item [(b)] $|V(H)\backslash V(H')| \leq \sum_{i \in I} a_i |A_i \setminus V(H')| \leq \left(1 + \frac{1}{\eta}\right) \sum_{i \in I} a_i |A_i \setminus A_i^+|$, where $V(H)\backslash V(H')\subseteq \bigcup_{i \in I} A_i$.
		\end{itemize}
	\end{lemma}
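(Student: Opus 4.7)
The plan is a greedy peeling argument that generalizes the classical fact cited in the footnote. Starting from $H' := H$, I would iteratively remove any vertex $v \in A_i \cap V(H')$ satisfying $d_{H'}(v) < a_i$ until no such violator remains, and let $R = V(H) \setminus V(H')$ denote the set of deleted vertices. Property (a) then holds by the stopping rule, and $R \subseteq \bigcup_i A_i$ since vertices outside $\bigcup_i A_i$ are never candidates for removal; this also yields the trivial first inequality $|R| \leq \sum_i a_i |R \cap A_i|$ in (b) since $a_i \geq 1$ and the $A_i$'s are disjoint.

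The heart of the argument will be a double count of $|E(H[R])|$ obtained by orienting each edge of $H[R]$ from the earlier-removed endpoint to the later-removed one. Writing $r(v)$ for the in-degree of $v$ under this orientation (i.e., the number of $H$-neighbors of $v$ removed strictly before $v$), one has $\sum_{v \in R} r(v) = |E(H[R])|$. At the instant $v \in R \cap A_i$ is deleted, $d_{H'}(v) < a_i$, so $r(v) = d_H(v) - d_{H'}(v) > d_H(v) - a_i$; for $v \in R \cap A_i^+$ the definition of $A_i^+$ then forces $r(v) > (1+2\eta_i)\, a_i$, producing the lower bound $|E(H[R])| \geq \sum_{i \in I} (1+2\eta_i)\, a_i\, |R \cap A_i^+|$. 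Symmetrically, counting for each $v \in R$ the $R$-neighbors removed after $v$ (at most $d_{H'}(v) < a_i$) together with $2|E(H[R])| = \sum_{v \in R} d_{H[R]}(v)$ yields the upper bound $|E(H[R])| < \sum_{i \in I} a_i |R \cap A_i|$. Combining the two inequalities and using $\eta_i \geq \eta$ gives
\[
2\eta \sum_{i \in I} a_i |R \cap A_i^+| \;<\; \sum_{i \in I} a_i |R \cap (A_i \setminus A_i^+)|,
\]
from which a routine rearrangement produces $\sum_{i \in I} a_i |R \cap A_i| \leq \bigl(1 + \tfrac{1}{\eta}\bigr) \sum_{i \in I} a_i |A_i \setminus A_i^+|$, finishing (b). Non-emptiness of $H'$ then follows at once, because (b) combined with the hypothesis \eqref{equ:key} forces $|R| < |V(H)|$.

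The main subtlety will be the careful bookkeeping: one must consistently use the degrees in the current subgraph $H'$ at the precise moment of each removal (not the degrees in the original $H$), and cleanly split $d_{H[R]}(v)$ into its ``before $v$'' and ``after $v$'' contributions. Once this is set up, the argument is essentially tight---the factor of $2$ inside $2(1+\eta_i)a_i$ is exactly the barrier responsible for the ``halfway'' constant $1/4$ in Theorems~\ref{Thm:int-bisec} and \ref{Thm:ext-bisec}, consistent with the remark the authors make immediately before the lemma statement.
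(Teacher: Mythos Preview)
Your proposal is correct and follows essentially the same greedy peeling plus edge double-count as the paper's proof. The only minor difference is that the paper counts the total number $T=\sum_k d_{H_k}(v_k)$ of edges deleted (which equals $e_H(R)+e_H(R,V(H'))$) and lower-bounds it by $\tfrac12\sum_{v\in R}d_H(v)$, whereas your orientation argument counts only $|E(H[R])|$; your version in fact yields the slightly sharper constant $1+\tfrac{1}{2\eta}$, which of course still implies the stated $1+\tfrac{1}{\eta}$.
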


	\begin{proof}
		We prove this using the following greedy algorithm.
		Initially, let $H_0 = H$. Suppose that $H_k$ is well-defined for some $k \geq 0$.
		If there exist $i \in I$ and a vertex $v_k \in V(H_k) \cap A_i$ such that $d_{H_k}(v_k) < a_i$,
		then set $H_{k+1} = H_k - \{v_k\}$ and proceed to the next iteration; otherwise, terminate and output $H_k$ as $H'$.
		We aim to show that this $H'$ is the desired induced subgraph.

		Assume that this algorithm terminates at $H'=H_t$.
		Let $U=V(H)\backslash V(H')$. So $U=\{v_0,v_1,...,v_{t-1}\}$.
		We now estimate the number $T$ of edges deleted in this process.
		It is clear from the algorithm that
		\begin{align*}
			T=\sum_{k=0}^{t-1} d_{H_k}(v_k)<\sum_{i\in I} a_i |U\cap A_i|=\sum_{i\in I} a_i\cdot (|U\cap A^+_i|+|U\cap (A_i\backslash A^+_i)|).
		\end{align*}
		On the other hand, we see that $T=e_H(U)+e_H(U,V(H'))$ is at least
		\begin{align*}
			\frac{1}{2}\sum_{v\in U} d_H(v)
			\ge \frac{1}{2}\sum_{i\in I} \sum_{v\in U\cap A^+_i} d_H(v)\ge \sum_{i\in I}(1+\eta_i)a_i\cdot|U\cap A^+_i|.
		\end{align*}
		Combining the above two inequalities, we have
		$$\sum_{i\in I}(1+\eta_i)a_i\cdot|U\cap A^+_i|\leq T\leq \sum_{i\in I} a_i\cdot(|U\cap A^+_i|+|U\cap (A_i\backslash A^+_i)|),$$
		implying that $$\sum_{i\in I} \eta_ia_i\cdot |U\cap A^+_i|\leq \sum_{i\in I}a_i\cdot |U\cap (A_i\backslash A^+_i)|.$$
		Using this inequality, we obtain that
		\begin{align*}
			\sum_{i\in I} a_i\cdot |U\cap A_i|&= \sum_{i\in I} a_i\cdot |U\cap A^+_i|+\sum_{i\in I} a_i\cdot |U\cap (A_i\backslash A^+_i)|\\
			&\leq \frac{1}{\eta}\cdot \sum_{i\in I} \eta_i a_i\cdot |U\cap A^+_i|+\sum_{i\in I} a_i\cdot |U\cap (A_i\backslash A^+_i)|\\
			&\leq (1+\frac{1}{\eta})\cdot \sum_{i\in I} a_i |U\cap (A_i\backslash A^+_i)|\le (1+\frac{1}{\eta})\cdot \sum_{i\in I} a_i |A_i\backslash A^+_i|.
		\end{align*}
		Note that $U\subseteq \cup_{i\in I} A_i$ and each $a_i$ is a positive integer,
		so together with \eqref{equ:key} we have
		$$|V(H)\backslash V(H')|=|U|\leq \sum_{i\in I} a_i\cdot |U\cap A_i|=\sum_{i\in I} a_i\cdot |A_i \setminus V(H')|\leq (1+\frac{1}{\eta})\cdot \sum_{i\in I} a_i |A_i\backslash A^+_i|<|V(H)|,$$ establishing item (b).
		This also shows that the algorithm must terminate at a non-empty subgraph $H'$.
		It is clear from the algorithm that item (a) holds.
	\end{proof}

	\section{Finding bisections with internal degree constraints}\label{sec:int}
	\noindent In this section, we prove our results on graph partitions with internal degree constraints.
	These are obtained from Theorem~\ref{Thm:min-indegree}, a general result applicable to all graphs that establishes the existence of certain tripartitions with degree constraints.
	From a probabilistic perspective, Theorem~\ref{Thm:min-indegree} states that for any $c \in (0,1)$, every $n$-vertex graph $G$ admits a tripartition $V(G) = A \cup B \cup C$ with approximately prescribed part sizes $\left(\frac{1-c}{2}\right)n$, $\left(\frac{1-c}{2}\right)n$, and $cn$, respectively,
	such that every vertex in $A$ has at least roughly half of its expected neighbors within $A$,
	every vertex in $B$ has at least roughly half of its expected neighbors within $B$,
	and every vertex in $C$ has approximately the expected number of neighbors in both $A$ and $B$.
	By appropriately redistributing the vertices of $C$ between $A$ and $B$, we can then derive the corresponding bisection results.

	Before stating the central result of this section, we introduce some necessary definitions.
	Throughout this section, let $c\in [0,1)$ and $\varepsilon\in (0,1)$ be fixed constants.
	Let $d_\varepsilon:=d_{\varepsilon,1}$ be the positive constant obtained from Lemma~\ref{Lem:exist C} with respect to $\varepsilon$.
	For every non-negative integer $i$, we define
	\begin{equation}\label{equ:phi}
		\varphi_{c,\varepsilon}(i)=\left(\frac{1-c}{4}\right)i-\left(2d_\varepsilon\cdot i^{\frac{1}{2}(1+\varepsilon)}+\varepsilon\cdot i\right).
	\end{equation}
	Since $\frac{1}{2}(1+\varepsilon)<1$, we have $\varphi_{c,\varepsilon}(i)=\left(\frac{1-c}{4}-\varepsilon\right)i-o(i)$ as $i$ tends to infinity.
	By treating $\varepsilon$ as an arbitrarily small positive constant, we can express $\varphi_{c,\varepsilon}(i) \to \left(\frac{1-c}{4}\right)i - o(i)$ as $\varepsilon \to 0$.
	
	\begin{theorem}\label{Thm:min-indegree}
		Let $c, \varepsilon$ be constants with $0\leq c<1$ and $0<\varepsilon\leq \frac{1-c}{4}.$
		Then there exists $n_0(c,\varepsilon)$ such that every graph $G$ with $n\geq n_0(c,\varepsilon)$ vertices admits a tripartition $V(G)=A\cup B\cup C$ such that
		\begin{itemize}
			\item[(a).] $\left(\frac{1-c-3\varepsilon}{2}\right)n\leq |A|,|B|\leq \left(\frac{1-c-\varepsilon}{2}\right)n$, and consequently $(c+\varepsilon)n\leq |C|\leq \left(c+3\varepsilon\right)n$,
			\item[(b).] $d_{A}(x)\geq \varphi_{c,\varepsilon}(d_G(x))$ for every vertex $x\in A$,
			\item[(c).] $d_{B}(y)\geq \varphi_{c,\varepsilon}(d_G(y))$ for every vertex $y\in B$, and
			\item[(d).] $d_{A}(z)\geq 2\cdot \varphi_{c,\varepsilon}(d_G(z))$ and $d_B(z)\geq 2\cdot \varphi_{c,\varepsilon}(d_G(z))$ for every vertex $z\in C$
		\end{itemize}
	\end{theorem}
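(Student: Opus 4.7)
The plan is to follow the two-stage framework alluded to in the introduction: produce a random tripartition, then deterministically refine it using Lemma~\ref{Lem:key}. I would assign each vertex independently to $A_0$ with probability $p_A=(1-c-2\varepsilon)/2$, to $B_0$ with the same probability, and to $C_0$ with probability $p_C=c+2\varepsilon$. These probabilities are tuned so that
\[
p_A\cdot d_G(v)=2\varphi_{c,\varepsilon}(d_G(v))+\varepsilon d_G(v)+4 d_\varepsilon d_G(v)^{(1+\varepsilon)/2},
\]
leaving an $\Omega(\varepsilon d_G(v))$ slack above the double threshold $2\varphi_{c,\varepsilon}(d_G(v))$ demanded by (d) for $C$-vertices, and twice that slack above the single threshold $\varphi_{c,\varepsilon}(d_G(v))$ demanded by (b),(c) for $A$- and $B$-vertices. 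By the Chernoff bound (Lemma~\ref{Lem:Chernoff}), $|A_0|,|B_0|,|C_0|$ concentrate inside the range of (a); and for each vertex $v$ the probability that its relevant degree falls below the doubled Lemma~\ref{Lem:key} threshold $2(1+\eta)\varphi_{c,\varepsilon}(d_G(v))$ (for a suitably small constant $\eta=\eta(c,\varepsilon)>0$) is at most $\exp(-\Omega(d_\varepsilon^2 d_G(v)^{\varepsilon}))$.

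Call a vertex $v$ \emph{bad} if some relevant degree violates that doubled threshold. Summing the Chernoff tails weighted by $\lceil\varphi_{c,\varepsilon}(d_G(v))\rceil$, and invoking Lemma~\ref{Lem:exist C} together with Markov's inequality (Lemma~\ref{Lem:Markov}), we can fix a tripartition satisfying (a) for which $\sum_{v\text{ bad}}\lceil\varphi_{c,\varepsilon}(d_G(v))\rceil\leq \varepsilon^2 n/10^3$. Now apply Lemma~\ref{Lem:key} to $H=G[A_0]$ with classes $A_d=\{v\in A_0:d_G(v)=d\}$, parameters $a_d=\lceil\varphi_{c,\varepsilon}(d)\rceil$, and common $\eta_d=\eta$. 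Hypothesis~\eqref{equ:key} holds by the weighted badness bound, so we obtain $A\subseteq A_0$ with $d_A(v)\geq \varphi_{c,\varepsilon}(d_G(v))$ for every $v\in A$ and $|A_0\setminus A|=O(\varepsilon n)$. A parallel application to $G[B_0]$ yields $B\subseteq B_0$; set $C:=V(G)\setminus(A\cup B)$. Conditions (a) (which survives the small deletions), (b), and (c) are then immediate.

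The hard step is (d): for every $z\in C$ with $d_G(z)$ large enough that $\varphi_{c,\varepsilon}(d_G(z))>0$, both $d_A(z)$ and $d_B(z)$ must reach $2\varphi_{c,\varepsilon}(d_G(z))$. Writing $d_A(z)=d_{A_0}(z)-|N(z)\cap(A_0\setminus A)|$, the random partition already gives $d_{A_0}(z)\geq 2\varphi_{c,\varepsilon}(d_G(z))+\Omega(\varepsilon d_G(z))$ for $z\in C_0$, so the task reduces to showing $|N(z)\cap(A_0\setminus A)|\leq \tfrac12\varepsilon d_G(z)$ for every such $z$. This is the main obstacle: Lemma~\ref{Lem:key} only controls the global size of $A_0\setminus A$, and nothing a priori prevents the deletions from clustering inside a single neighborhood $N(z)$. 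I plan to overcome this by enlarging the Lemma~\ref{Lem:key} set-up, either by folding $C_0$-vertices into the index set $I$ as an auxiliary family of classes whose presence forces the greedy bookkeeping to disperse deletions across neighborhoods, or by inserting a preliminary Chernoff/union-bound step that, using $\sum_d d\exp(-d_\varepsilon^2 d^\varepsilon)\leq\varepsilon^2/10^5$ from Lemma~\ref{Lem:exist C}, controls $|N(z)\cap \mathrm{Bad}_A|$ for all $z$ simultaneously. A symmetric argument then handles vertices that enter $C$ as deletions from $A_0\setminus A$ or $B_0\setminus B$, completing the verification of (d).
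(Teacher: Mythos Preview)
Your random step and the application of Lemma~\ref{Lem:key} to $G[A_0]$ and $G[B_0]$ are fine, and you correctly identify condition~(d) as the crux. But the two fixes you sketch do not close the gap, and one part of the plan is outright contradictory.

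First, the sentence ``a symmetric argument then handles vertices that enter $C$ as deletions from $A_0\setminus A$ or $B_0\setminus B$'' cannot work. If $z\in A_0\setminus A$, then by the very termination rule of the greedy procedure in Lemma~\ref{Lem:key} you have $d_A(z)<\varphi_{c,\varepsilon}(d_G(z))$; so $d_A(z)\geq 2\varphi_{c,\varepsilon}(d_G(z))$ is impossible, and (d) fails for every such $z$. Dumping the deleted vertices into $C$ is therefore not an option. Second, for $z\in C_0$, controlling $|N(z)\cap\mathrm{Bad}_A|$ does not control $|N(z)\cap(A_0\setminus A)|$: the greedy in Lemma~\ref{Lem:key} may delete vertices of $A_i^+$ once earlier deletions have eroded their degree, so $A_0\setminus A$ can be much larger than the bad set inside a single neighbourhood. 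Neither ``folding $C_0$ into $I$'' nor a union bound over all $z$ addresses this cascade.

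The paper resolves both issues by routing the refinement differently. After the random step it first moves all bad vertices out of $C$ into $A\cup B$, so every remaining $C$-vertex is simultaneously $A$-good and $B$-good. Then, to fix $A$, it does not simply delete low-degree vertices from $A$ into $C$; instead, whenever $v\in A$ has fewer than $\varphi(i)$ neighbours in $A\cup C$, it moves $v$ \emph{together with all of $N(v)\cap C$} into $B$. Two things happen: (i) $v$ now has at least $i-\varphi(i)\geq 2(1+\mu_i)\varphi(i)$ neighbours in $B$, so it is $B$-good and can safely live there; and (ii) every vertex left in $C$ is non-adjacent to $v$, so its $A$-degree is untouched and its $A$-goodness persists. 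A final small transfer from $C$ back into $A$ patches the internal degrees of the surviving $A$-vertices, and then the whole procedure is repeated with the roles of $A$ and $B$ swapped. The key idea you are missing is this coupled relocation of $v$ with its $C$-neighbours, which simultaneously preserves (d) for $C$ and gives the deleted vertex a legitimate new home.
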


	The remainder of this section is organized as follows.
	In Subsection~\ref{subsec:int-bisec-exact},
	we derive the internal degree results (including Theorems~\ref{Thm:int-bisec} and~\ref{Thm:int-bisec-exact}, and Corollaries~\ref{coro:int-bisec-main} and \ref{coro:int-bisec}) as consequences of Theorem~\ref{Thm:min-indegree}.
	Subsection~\ref{subsec:pf-int-bisec} provides the complete proof of Theorem~\ref{Thm:min-indegree}.

	\subsection{Proof of Theorems~\ref{Thm:int-bisec} and~\ref{Thm:int-bisec-exact}, and Corollaries~\ref{coro:int-bisec-main} and \ref{coro:int-bisec} via Theorem~\ref{Thm:min-indegree}}\label{subsec:int-bisec-exact}
	\noindent
	Assuming Theorem~\ref{Thm:min-indegree},
	we now derive our internal degree results.
	First, we observe that Theorem~\ref{Thm:int-bisec} follows asymptotically from the special case $c=0$ of Theorem~\ref{Thm:min-indegree}.
	
	\begin{proof}[\bf Proof of Theorem~\ref{Thm:int-bisec}.]
		It suffices to prove the following statement: for arbitrarily small $\varepsilon>0$,
		there exists an integer $i_0:=i_0(\varepsilon)$ such that every graph $G$ has a bisection, where every vertex $v$ with $d_G(v)\geq i_0$ has at least $\left(\frac{1}{4}-\varepsilon\right)d_G(v)$ neighbors in its own part.
		
		Fix $c=0$, and let $n_0(0,\varepsilon/2)$ be the constant from Theorem~\ref{Thm:min-indegree} (with parameters $c=0$ and $\varepsilon/2$).
		Consider any graph $G$ with at least $i_0$ vertices, where $i_0\geq n_0(0,\varepsilon/2)$ is sufficiently large.
		Let $V^i$ denote the set of vertices with degree $i$ in $G$.
		From \eqref{equ:phi}, we see that for all $i\geq i_0$, it holds
		$$\varphi_{0,\varepsilon/2}(i)=\frac{i}{4}-\left(2d_{\varepsilon/2}\cdot i^{\frac{1}{2}(1+\varepsilon/2)}+(\varepsilon/2)\cdot i\right)\geq \left(\frac{1}{4}-\varepsilon\right)i.$$
		Applying Theorem~\ref{Thm:min-indegree} with parameters $c=0$ and $\varepsilon/2$, we obtain a tripartition $V(G) = A \cup B \cup C$ satisfying $|A|\leq \frac{n}{2}$, $|B| \leq \frac{n}{2}$,
		every vertex $x\in (A\cup B)\cap V^i$ has at least $\varphi_{0,\varepsilon/2}(i)$ neighbors in its own part,
		and moreover, every vertex $z\in C\cap V^i$ has at least $2\varphi_{0,\varepsilon/2}(i)$ neighbors in each of $A$ and $B$.
		By distributing vertices of $C$ between $A$ and $B$ appropriately, we can form a bisection $A'\cup B'$ of $G$
		such that every vertex $v$ with $d_G(v)\geq i_0$ has at least $\varphi_{0,\varepsilon/2}(d_G(v))\geq \left(\frac{1}{4}-\varepsilon\right)d_G(v)$ neighbors in its own part,
		completing the proof.
	\end{proof}
	
	The proof of Theorem~\ref{Thm:int-bisec-exact} is a direct consequence of Theorem~\ref{Thm:min-indegree},
	which provides the exact degree bounds for the tripartition construction.

	\begin{proof}[\bf Proof of Theorem~\ref{Thm:int-bisec-exact}.]
		Fix constants $0\leq c<1$ and $0<\varepsilon\leq 1-c.$
		Set $\varepsilon'=(1-c)^2\varepsilon/40$.
		Let $G$ be a graph with $\delta(G) \geq \left(\frac{4}{1-c}+\varepsilon\right)k$, where $k \geq n_0(c,\varepsilon')$ is sufficiently large.
		Applying Theorem~\ref{Thm:min-indegree} with parameters $c$ and $\varepsilon'$ gives a tripartition $V(G) = A \cup B \cup C$ satisfying that
		$$\left(\frac{1-c-\varepsilon}{2}\right)n\leq \left(\frac{1-c-3\varepsilon'}{2}\right)n\leq |A|,|B|\leq \left(\frac{1-c-\varepsilon'}{2}\right)n\leq \left(\frac{1-c}{2}\right)n,$$
		every vertex $x\in A\cup B$ with degree $i$ in $G$ has at least
		$$\varphi_{c,\varepsilon'}(i)\geq \left(\frac{1-c}{4}-2\varepsilon'\right)i\geq \left(\frac{1-c}{4}-2\varepsilon'\right)\cdot \left(\frac{4}{1-c}+\varepsilon\right)k=\left(1+\bigg(\frac{1-c}{20}-2\varepsilon'\bigg)\varepsilon\right)\cdot k\geq k$$\footnote{Here we use the facts that $i\geq \delta(G)\geq \left(\frac{4}{1-c}+\varepsilon\right)k$ and $\varepsilon'=(1-c)^2\varepsilon/40$.}neighbors in its own part,
		and moreover, every vertex $z\in C$ with degree $i$ in $G$ has at least $2\varphi_{c,\varepsilon'}(i)\geq 2k$ neighbors in both sets $A$ and $B$.
		This completes the proof.
	\end{proof}

	Now we can derive Corollaries~\ref{coro:int-bisec-main} and \ref{coro:int-bisec} from Theorem~\ref{Thm:int-bisec-exact} promptly.
	
	\begin{proof}[\bf Proof of Corollaries~\ref{coro:int-bisec-main} and \ref{coro:int-bisec}.]
		First we consider Corollary~\ref{coro:int-bisec-main}.
		Fix $\varepsilon\in (0,1)$.
		Set $c=1-\varepsilon$ and let $k_1(\varepsilon)=k_0(c,\varepsilon)$ be the constant obtained from Theorem~\ref{Thm:int-bisec-exact}.
		It suffices to prove the statement of Corollary~\ref{coro:int-bisec-main} for sufficiently large integers $k\geq k_1(\varepsilon)$.
		Let $G$ be a graph with minimum degree at least $c_\varepsilon\cdot k$, where $c_\varepsilon:=\frac{4}{\varepsilon}+\varepsilon$.
		Using Theorem~\ref{Thm:int-bisec-exact}, there exists a tripartition $V(G)=A\cup B\cup C$ such that
		$|A|,|B|\leq \frac{\varepsilon n}{2}$, $|C|\geq (1-\varepsilon)n$,
		every vertex in $A\cup B$ has at least $k$ neighbors in its own part,
		and every vertex in $C$ has at least $2k$ neighbors in both $A$ and $B$.
		Distributing the vertices of $C$ arbitrarily between $A$ and $B$ could yield a bisection $A' \cup B'$
		such that every vertex has at least $k$ neighbors in its own part,
		and every vertex from $C$ has at least $k$ neighbors in the opposite part.
		Since $|C|\geq (1-\varepsilon)n$,
		at least $(1-\varepsilon)n$ vertices satisfy condition (2),
		completing the proof of Corollary~\ref{coro:int-bisec-main}.
		
		For Corollary~\ref{coro:int-bisec},
		its first assertion follows easily from the $c=0$ case of Theorem~\ref{Thm:int-bisec-exact}.
		To see the ``moreover'' part, we apply Theorem~\ref{Thm:int-bisec-exact} with $c=\frac14$.
		Let $k\geq k_0(\frac14,\varepsilon)$ and let $G$ be a graph with minimum degree at least $\left(\frac{16}{3} + \varepsilon\right)k$.
		Theorem~\ref{Thm:int-bisec-exact} implies the existence of a tripartition $V(G)=A\cup B\cup C$ such that $|A|,|B|\leq \frac{3n}{8}$,
		$|C|\geq \frac{n}{4}$, every vertex in $A\cup B$ has at least $k$ neighbors in its own part,
		and every vertex in $C$ has at least $2k$ neighbors in both $A$ and $B$.
		Partition $C=C_1\cup C_2$, with $|C_1|=\lfloor\frac{n}{2}\rfloor-|A|$.
		Let $A'=A\cup C_1$ and $B'=B\cup C_2$.
		Then $A'\cup B'$ forms a bisection of $G$.
		It is evident that both $G[A']$ and $G[B']$ have minimum degree at least $k$,
		and the bipartite subgraph $(A',B')_G$ has at least $2k\cdot |C|\geq \frac{kn}{2}$ edges, i.e., it has average degree at least $k$.
		Hence, $A'\cup B'$ is the desired bisection.
	\end{proof}

	\subsection{Proof of Theorem~\ref{Thm:min-indegree}}\label{subsec:pf-int-bisec}
	\noindent Throughout this subsection, we adopt the following notation.
	Fix constants $c\in [0,1)$ and $\varepsilon\in (0,\frac{1-c}{4}]$.
	Let $d_\varepsilon:=d_{\varepsilon,1}$ be the constant obtained from Lemma~\ref{Lem:exist C} (when applying it with $\varepsilon$).
	We can rewrite the expression for $\varphi_{c,\varepsilon}(i)$ from \eqref{equ:phi} as follows:
	\begin{equation}\label{equ:phi2}
		\varphi(i):=\varphi_{c,\varepsilon}(i)=\left(\frac{1-c}{4}-\frac{\mu_i}{2}\right)\cdot i, \mbox{~~where~~} \mu_i=4d_\varepsilon\cdot i^{\frac{1}{2}(\varepsilon-1)}+2\varepsilon.
	\end{equation}
	For brevity, we will treat $\varphi(i)$ as an integer throughout this subsection.\footnote{Observant readers will notice that rounding $\varphi(i)$ down to its floor will only affect the degree of each vertex by at most one, making this adjustment negligible for our results.}
	Let $G$ be an $n$-vertex graph, where $n\geq n_0(c,\varepsilon)$ is sufficiently large.
	Let $V^i$ represent the set of all vertices of degree $i$ in $G$.
	
	For a subset $S \subseteq V(G)$, we say that a vertex $v\in V^i$ is {\bf $S$-good} if
	\begin{equation}\label{equ:goodness}
		d_S(v)\ge 2(1+\mu_i)\varphi(i).
	\end{equation}
	We denote the set of all $S$-good vertices in $V^i$ by $V^i_S$.
	Let $I$ denote the set of all integers $i$ with $\varphi(i)>0$, and call the vertices in $\bigcup_{i\in I}V^i$ {\it active}.
	For Theorem~\ref{Thm:min-indegree}, it suffices to consider active vertices.
	This is because $\varphi(j) \leq 0$ for each $j\notin I$, and the degree condition in Theorem~\ref{Thm:min-indegree} trivially holds for vertices in $V^j$.
	
	We aim to establish the existence of a tripartition $V(G) = A \cup B \cup C$ that satisfies conditions (a)-(d) stated above.
	The construction of this tripartition unfolds in two major stages:
	\begin{itemize}
		\item An initial random partitioning step, which provides some essential yet lasting properties, and
		\item A deterministic refinement step, involving multiple rounds of local vertex relocations.
	\end{itemize}
	These two stages are addressed separately in the following two lemmas.
	
	We now prove the first-stage lemma, which is obtained by modifying a random partition.
	Its property (1c), established through Lemma~\ref{Lem:exist C}, serves as the most important foundation for the subsequent refinements.
	In a nutshell, all upcoming vertex relocation operations and their effects can be effectively controlled by the specific quantity defined in property (1c).

	\begin{lemma}\label{Lem:indegree-tripar1}
		The graph $G$ has a tripartition $V(G)=A\cup B\cup C$ such that the following hold:
		\begin{itemize}
			\item [(1a).] $\left(\frac{1-c}{2}-\frac{11}{10}\varepsilon\right)n\leq |A|,|B|\leq \left(\frac{1-c}{2}-\frac{9}{10}\varepsilon\right)n$,
			\item [(1b).]  every vertex in $C$ is both $A$-good and $B$-good, and
			\item [(1c).] $\sum_{i\ge 1} i\cdot |V^i\backslash (V^i_A\cap V^i_B)|\leq \frac{\varepsilon^2n}{10^4}.$
		\end{itemize}
	\end{lemma}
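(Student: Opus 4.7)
My strategy has two stages: a random sampling stage yielding a tripartition that satisfies (1a) and (1c) with positive probability, followed by a deterministic refinement stage that enforces (1b) at negligible cost to the other two conditions. For the random stage, I would place each vertex of $G$ independently in $A$, $B$, or $C$ with probabilities $p_A = p_B = p := \frac{1-c}{2} - \varepsilon$ and $p_C = c + 2\varepsilon$. Condition (1a) (with slack $\varepsilon n/100$) then follows directly from Lemma~\ref{Lem:Chernoff} applied to $|A|$ and $|B|$, with failure probability $o(1)$.

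The heart of the first stage is bounding $\E\bigl[\sum_i i \cdot |V^i \setminus (V^i_A \cap V^i_B)|\bigr]$. Using the explicit form of $\varphi(i)$ from \eqref{equ:phi2}, a direct expansion gives
\[
pi - 2(1+\mu_i)\varphi(i) \;=\; i\Bigl(-\varepsilon + \mu_i\Bigl(\tfrac{1+c}{2}+\mu_i\Bigr)\Bigr) \;\geq\; 2 d_\varepsilon\, i^{(1+\varepsilon)/2},
\]
where the last inequality uses $c \geq 0$ together with $\mu_i \geq 2\varepsilon + 4 d_\varepsilon i^{(\varepsilon-1)/2}$. Feeding this gap into Lemma~\ref{Lem:Chernoff} (and using $p \leq 1/2$) yields $\Prob(v \notin V^i_A) \leq 2 \exp(-d_\varepsilon^2 i^\varepsilon)$ for each $v \in V^i$, and similarly for $V^i_B$. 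Summing over all vertices and applying Lemma~\ref{Lem:exist C} gives
\[
\E\!\left[\sum_{i\geq 1} i \cdot |V^i \setminus (V^i_A \cap V^i_B)|\right] \;\leq\; 4n \sum_{i\geq 1} i\, \exp(-d_\varepsilon^2 i^\varepsilon) \;\leq\; \frac{4\varepsilon^2 n}{10^5}.
\]
Markov then forces this sum to be at most $\varepsilon^2 n/10^4$ with probability $>1/2$; combined with (1a), both hold simultaneously with positive probability, and I fix such a tripartition.

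For the refinement stage, let $U$ denote the set of bad vertices (those not in $V^i_A \cap V^i_B$) currently lying in $C$, and move each $u \in U$ to $A$ or $B$ arbitrarily. Since $|U| \leq \varepsilon^2 n/10^4 \leq \varepsilon n/100$, the sizes $|A|, |B|$ grow by at most $\varepsilon n/100$ each, so (1a) stays intact, and (1b) holds by construction. The critical observation for (1c) is a monotonicity principle: moving a vertex \emph{out of $C$ into $A$} can only increase $d_A(w)$ for each neighbor $w$ while leaving $d_B(w)$ unchanged, and symmetrically for $B$; hence every $A$-good or $B$-good vertex remains so in the new partition, so the new bad set is contained in the old one, and (1c) is preserved. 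The main technical obstacle will be establishing the sharp pointwise gap $pi - 2(1+\mu_i)\varphi(i) \geq 2 d_\varepsilon i^{(1+\varepsilon)/2}$ uniformly in $i \in I$ and $c \in [0,1)$: it requires precisely calibrating $p$ against both the $O(\varepsilon)$ and the $O(i^{(\varepsilon-1)/2})$ contributions to $\mu_i$ so that the resulting Chernoff exponent matches the $\exp(-d_\varepsilon^2 i^\varepsilon)$ form required by Lemma~\ref{Lem:exist C}. Everything else is routine accounting.
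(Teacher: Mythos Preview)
Your proposal is correct and follows essentially the same two-stage approach as the paper: a random tripartition with probabilities $\tfrac{1-c}{2}-\varepsilon,\ \tfrac{1-c}{2}-\varepsilon,\ c+2\varepsilon$, Chernoff for the part sizes, a Chernoff-based tail bound on the bad event $v\notin V^i_A\cap V^i_B$ summed via Lemma~\ref{Lem:exist C} and Markov, then the deterministic relocation of bad vertices from $C$ into $A\cup B$ using the monotonicity $A\subseteq A'\Rightarrow V^i_A\subseteq V^i_{A'}$. The only cosmetic difference is that you compute the additive gap $pi-2(1+\mu_i)\varphi(i)\ge 2d_\varepsilon i^{(1+\varepsilon)/2}$ directly, whereas the paper parametrizes the same bound via $\lambda_i=\tfrac{4d_\varepsilon}{1-c}i^{(\varepsilon-1)/2}$ and the inequality $(1-\lambda_i)\E[Y]>2(1+\mu_i)\varphi(i)$; both lead to the same exponent $d_\varepsilon^2 i^\varepsilon$.
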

	
	\begin{proof}
		Consider a random tripartition $V(G)=A\cup B\cup C$, where each vertex is independently placed in sets $A, B$ and $C$
		with probabilities $\frac{1-c}{2}-\varepsilon,  \frac{1-c}{2}-\varepsilon$ and $c+2\varepsilon$, respectively.
		
		Note that both $|A|$ and $|B|$ are distributed as $X=\Bin(n,\frac{1-c}{2}-\varepsilon)$.
		Thus by Lemma~\ref{Lem:Chernoff} we have
		\begin{align}
			\label{equ:chern1}
			\Prob\left(|X-\E[X]|>\frac{\varepsilon}{10}\E[X]\right)\leq 2e^{-\Omega(\varepsilon^2n)}< \frac14,
		\end{align}
		where the last inequality holds because $n\geq n_0(c,\varepsilon)$ is sufficiently large.
		Let $$\lambda_i=\frac{4d_\varepsilon}{1-c}\cdot i^{\frac{1}{2}(\varepsilon-1)}.$$
		Then we see that
		\begin{align}\label{equ:int-bise-mean}
			\left(\frac{1-c}{2}-\varepsilon\right)\left(1-\lambda_i\right)i>\left(\frac{1-c}{2}-\frac{(1-c)\lambda_i}{2}-\varepsilon\right)i=\left(\frac{1-c}{2}-\frac{\mu_i}{2}\right) i> 2(1+\mu_i)\varphi(i),
		\end{align}
		where the last inequality follows by \eqref{equ:phi2} that $\varphi(i)=\left(\frac{1-c}{4}-\frac{\mu_i}{2}\right)i$.
		Recall the definition that $V^i_A\cap V^i_B$ denotes the set of vertices $v\in V^i$ satisfying $d_A(v)\ge 2(1+\mu_i)\varphi(i)$ and $d_B(v)\ge 2(1+\mu_i)\varphi(i)$.
		For every $v\in V^i$, both $d_A(v)$ and $d_B(v)$ are distributed as $Y=\Bin(i,\frac{1-c}{2}-\varepsilon)$.
		Together with \eqref{equ:int-bise-mean} and Lemma~\ref{Lem:Chernoff}, this implies that the probability $p_i=\Prob\left(v\in V^i\backslash (V^i_A\cap V^i_B)\right)$ satisfies
		\begin{align}\notag
			p_i&\leq 2\cdot\Prob\left(Y< 2(1+\mu_i)\varphi(i)\right)\le 2\cdot \Prob\left(Y<\left(\frac{1-c}{2}-\varepsilon\right)\left(1-\lambda_i\right)i\right)=2\cdot\Prob(Y< (1-\lambda_i)\E[Y])\\
			&\leq 4\cdot \exp(-\lambda_i^2\E[Y]/3)=4\cdot \exp\left(-\frac{16d_\varepsilon^2}{3(1-c)^2}\left(\frac{1-c}{2}-\varepsilon\right)i^{\varepsilon}\right)\le 4\cdot \exp(-d_\varepsilon^2i^{\varepsilon}),\label{equ:pi}
		\end{align}
		where the last inequality holds because $\varepsilon\leq \frac{1-c}{4}$ and thus $\frac{16d_\varepsilon^2}{3(1-c)^2}\left(\frac{1-c}{2}-\varepsilon\right)\geq \frac{4d_\varepsilon^2}{3(1-c)}\geq d_\varepsilon^2$.
		Let $$Z=\sum_{i\geq 1} i\cdot |V^i\backslash (V^i_A\cap V^i_B)|.$$
		Using \eqref{equ:pi} and Lemma~\ref{Lem:exist C}, we obtain that
		\begin{align}\notag
			\E[Z]&=\sum_{i\geq 1}\sum_{v\in V^i}i\cdot \Prob(v\in V^i\backslash (V^i_A\cap V^i_B))=\sum_{i\geq 1}i\cdot p_i\cdot |V^i|\\
			&\leq \sum_{i\geq 1} i\cdot 4\exp(-d_\varepsilon^2i^{\varepsilon})\cdot|V^i|\leq
			\left(\sum_{i=1}^{+\infty}i\cdot \exp(-d_\varepsilon^2i^{\varepsilon})\right)\cdot 4n\le \frac{\varepsilon^2n}{25000}.\label{equ:EZ}
		\end{align}
		By Markov's inequality (Lemma \ref{Lem:Markov}), we have
		\begin{equation}\label{equ:mark1}
			\Prob\left(Z\ge 2\cdot \E[Z]\right)\le \frac{1}{2}.
		\end{equation}
		Combining \eqref{equ:chern1}, \eqref{equ:EZ}, and \eqref{equ:mark1}, we conclude that there exists a tripartition $V(G)=A\cup B\cup C$ satisfying
		\begin{equation}\label{equ:ABsize}
			\left(1-\frac{\varepsilon}{10}\right)\left(\frac{1-c}{2}-\varepsilon\right)n\leq |A|,|B|\leq \left(1+\frac{\varepsilon}{10}\right)\left(\frac{1-c}{2}-\varepsilon\right)n
		\end{equation}
		and
		\begin{equation}
			\label{equ:weightsum1}
			\sum_{i\geq 1} i\cdot |V^i\backslash (V^i_A\cap V^i_B)|=Z<2\cdot \E[Z]\leq \frac{\varepsilon^2n}{10^4}.
		\end{equation}
		
		Next, we move all vertices in $C\cap\left(\cup_{i\ge 1} V^i\backslash (V^i_A\cap V^i_B)\right)$ to $A\cup B$,
		distributing these vertices in an arbitrary manner. We denote the resulting tripartition as $V(G)=A'\cup B'\cup C'$.
		We now claim $A'\cup B'\cup C'$ is the desired tripartition.
		It is important to observe that since $A\subseteq A'$, any vertex which is $A$-good is automatically $A'$-good.
		Thus we have $V^i_A\subseteq V^i_{A'}$ and similarly, $V^i_B\subseteq V^i_{B'}$.
		By \eqref{equ:weightsum1}, this implies that
		$$ \sum_{i\geq 1} i\cdot |V^i\backslash (V^i_{A'}\cap V^i_{B'})|\leq \sum_{i\geq 1} i\cdot |V^i\backslash (V^i_A\cap V^i_B)|\leq \frac{\varepsilon^2n}{10^4},$$
		establishing property (1c) for the tripartition $A'\cup B'\cup C'$.
		Also, as there is no vertex of $V^i\backslash (V^i_A\cap V^i_B)$ belonging to $C'$, we obtain that $C'\cap V^i\subseteq V^i_A\cap V^i_B\subseteq V^i_{A'}\cap V^i_{B'}$,
		i.e., property (1b) holds for $A'\cup B'\cup C'$.
		Finally, using \eqref{equ:weightsum1} again,
		the number of vertices moved out of $C$ is $$|C\cap\left(\cup_{i\ge 1} V^i\backslash (V^i_A\cap V^i_B)\right)|\leq \sum_{i\geq 1} i\cdot |V^i\backslash (V^i_A\cap V^i_B)|<\frac{\varepsilon^2n}{10^4}.$$
		So evidently, $0\le |A'|-|A|\le \varepsilon^2n/10^4$ and $0\le |B'|-|B|\le \varepsilon^2n/10^4$.
		By \eqref{equ:ABsize}, we can derive that $$\left(\frac{1-c}{2}-\frac{21}{20}\varepsilon\right)n< |A|,|B|<\left(\frac{1-c}{2}-\frac{19}{20}\varepsilon\right)n.$$
		Putting everything together, we see that property (1a) holds for the parts $A'$ and $B'$.
		This finishes the proof of Lemma~\ref{Lem:indegree-tripar1}.
	\end{proof}
	
	Next, we present our key technical lemma (Lemma~\ref{Lem:indegree-tripar2}) in this section.
	At a high level, it builds upon the tripartition $A \cup B \cup C$ from Lemma~\ref{Lem:indegree-tripar1} to construct a modified tripartition $A' \cup B' \cup C'$ that both preserves properties (1a)-(1c) and ensures all vertices in $A'$ satisfy condition (b) of Theorem~\ref{Thm:min-indegree}, or equivalently property (2d) of this lemma.
	To complete the proof of Theorem~\ref{Thm:min-indegree}, we will apply this lemma a second time to ensure that all vertices in a modified version of $B'$ also satisfy condition (c) of Theorem~\ref{Thm:min-indegree}, where property (2e) becomes crucial.
	The proof of this lemma is deterministic and proceeds in three concrete steps.
	First, inside $A$ we apply Lemma~\ref{Lem:key} to identify a large robust core $A^*$ whose vertices already satisfy the desired internal-degree condition.
	Second, we run a greedy procedure that repeatedly moves a bad vertex of $A$, together with some of its current neighbors in $C$, into $B$; the weighted estimate from property (1c) of Lemma~\ref{Lem:indegree-tripar1} guarantees that only a very small set of vertices is affected, and the core $A^*$ remains untouched.
	Third, after this greedy cleaning, we repair any remaining deficit inside $A$ by moving a small set of vertices from the surviving part of $C$ back into $A$.
	The formal proof below follows exactly these three steps.
	
	\begin{lemma}
		\label{Lem:indegree-tripar2}
		Let $A\cup B\cup C$ be a tripartition of $V(G)$ such that every vertex in $C$ is both $A$-good and $B$-good, $|A|>\frac{\varepsilon n}{1000}$, and
		\begin{equation}
			\label{equ:indegree condition2}
			\sum_{i\ge 1} i\cdot |(A\cap V^i)\backslash V^i_A|\le \frac{\varepsilon^2n}{250}.
		\end{equation}
		Then there exists a tripartition $A'\cup B'\cup C'$ of $V(G)$ such that the following hold:
		\begin{itemize}
			\item[(2a).] $B\subseteq B'$ and $C'\subseteq C$,\footnote{One cannot establish a containment relationship between $A$ and $A'$ in this lemma.}
			\item[(2b).] $|A|-\frac{\varepsilon n}{500}\le |A'|\le |A|+\frac{\varepsilon n}{500}$, $|B|\leq |B'|\leq |B|+\frac{\varepsilon n}{500}$ and $|C|-\frac{\varepsilon n}{500}\leq |C'|\leq |C|$,
			\item[(2c).] every vertex in $C'$ is both $A'$-good and $B'$-good,
			\item[(2d).] every vertex $x\in A'\cap V^i$ satisfies $d_{A'}(x)\ge \varphi(i)$, and
			\item[(2e).] every vertex in $B'\backslash B$ is $B'$-good. In particular, this shows that for each integer $i\geq 1$,
			\begin{equation}\label{equ:indeg B'}
				(B'\cap V^i)\backslash V^i_{B'}\subseteq (B\cap V^i)\backslash V^i_B.
			\end{equation}
		\end{itemize}
	\end{lemma}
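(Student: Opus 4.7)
The plan is to construct $A'\cup B'\cup C'$ in two major stages: first extract a dense ``core'' $A^*\subseteq A$ via Lemma~\ref{Lem:key}, and then iteratively redistribute the vertices of $W:=A\setminus A^*$ to $B$, together with carefully chosen $C$-vertices. Concretely, I apply Lemma~\ref{Lem:key} to the induced subgraph $H=G[A]$ with family $\{A\cap V^i\}_{i\in I}$, where $I=\{i\colon \varphi(i)>0\}$, weights $a_i=\varphi(i)$, and slack parameters $\eta_i=\mu_i$. Under this choice $A_i^+=A\cap V^i_A$, so the applicability condition \eqref{equ:key} is controlled by the hypothesis \eqref{equ:indegree condition2} combined with $\varphi(i)\le(1-c)i/4$ and $\eta=\min_{i\in I}\mu_i\ge 2\varepsilon$. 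This produces $A^*\subseteq A$ with $d_{A^*}(v)\ge\varphi(i)$ for every $v\in A^*\cap V^i$, which gives (2d) once we set $A'=A^*$. Moreover, Lemma~\ref{Lem:key}(b) yields the weighted bound $\sum_i \varphi(i)\,|W\cap V^i|\le \left(1+\frac{1}{\eta}\right)\sum_i\varphi(i)\,|(A\cap V^i)\setminus V^i_A|\le O(\varepsilon n)$, which is the main quantitative lever for what follows.

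Next, I process the vertices of $W$ one at a time. For each $w\in W$ with $d_G(w)=i$, I move $w$ to the $B$-part; to enforce (2e), I simultaneously relocate a set $S_w\subseteq N(w)\cap C$ into $B$ of just enough size to make $w$ itself $B'$-good. The key inequality is that, by the greedy nature of Lemma~\ref{Lem:key}, $d_{A^*}(w)<\varphi(i)$ for every $w\in W$; hence $w$ has more than $i-\varphi(i)$ neighbors outside $A^*$, and a short computation using $\varphi(i)\le(1-c)i/4$ and $\mu_i<1/2$ shows $i-\varphi(i)\ge 2(1+\mu_i)\varphi(i)$, so there are always enough neighbors of $w$ in $B\cup W\cup C$ to achieve the boost. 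The moved $C$-vertices in $S_w$ were $B$-good by hypothesis, hence remain $B'$-good after relocation, so (2e) is preserved throughout this round.

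Finally, I repair any $c\in C$ whose $A'$-goodness breaks because $|N(c)\cap W|$ exceeds the slack $d_A(c)-2(1+\mu_i)\varphi(i)$. Each such $c$ is moved into the $B$-part; since $c$ was originally $B$-good, it remains $B'$-good. After these rounds of relocations, properties (2a), (2c), (2d), and (2e) hold by construction: (2d) from the first stage; (2c) because every $c$ remaining in $C'$ has $d_{A'}(c)=d_{A^*}(c)\ge 2(1+\mu_i)\varphi(i)$ (by our ejection rule) and $d_{B'}(c)\ge d_B(c)\ge 2(1+\mu_i)\varphi(i)$ (since $B\subseteq B'$); and (2e) from the second and third rounds.

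The main obstacle I anticipate is the precise bookkeeping needed for the size bound (2b). The weighted bound $\sum_i\varphi(i)\,|W\cap V^i|\le O(\varepsilon n)$ translates via $|S_w|\le 2(1+\mu_i)\varphi(i)\le 3\varphi(i)$ to $\sum_w|S_w|\le O(\varepsilon n)$, which is borderline tight against the target $\varepsilon n/500$. Similarly, the number of $c\in C$ ejected in the third round is controlled by the edges from $W$ to $C$, which requires translating the weighted bound on $\varphi(i)\,|W\cap V^i|$ into one on $i\,|W\cap V^i|$ (where the ratio $i/\varphi(i)$ can blow up for $i$ close to the threshold $i_0$). Tightening these constants will likely require either (i) coupling the second and third rounds so that a single $C$-vertex relocation serves both purposes, or (ii) tuning the slack parameters $\eta_i$ in Lemma~\ref{Lem:key} to obtain a sharper weighted bound; this is where I expect most of the technical effort of the full proof to lie.
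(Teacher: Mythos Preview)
Your high-level plan --- apply Lemma~\ref{Lem:key} to $G[A]$ to extract a core $A^*$, then redistribute the rest --- matches the paper's opening move, and you have correctly located the decisive obstacle: controlling how many $C$-vertices lose $A'$-goodness. But this is a genuine gap, not just a constant to be tuned. With $A'=A^*$, a vertex $c\in C\cap V^i$ must be ejected whenever $|N(c)\cap W|$ exceeds its slack $d_A(c)-2(1+\mu_i)\varphi(i)$, and that slack can be zero; so the ejected count can be as large as $e(W,C)\le\sum_i i\,|W\cap V^i|$. Lemma~\ref{Lem:key}(b) yields only a $\varphi(i)$-weighted bound on $W$, and since $W$ need not lie inside $\bigcup_i(A\cap V^i)\setminus V^i_A$ you cannot invoke hypothesis~\eqref{equ:indegree condition2} either. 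Neither of your suggested fixes closes this: (ii) cannot help because Lemma~\ref{Lem:key} intrinsically returns an $a_i$-weighted bound with $a_i=\varphi(i)$, and (i), while pointing the right way, needs a specific implementation you have not supplied.

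The paper's resolution is to run a \emph{different} greedy, and this is the idea you are missing. Rather than removing all of $W$ at once, the paper iteratively removes $v_k$ from the current $A^k$ only when $|N_G(v_k)\cap(A^k\cup C^k)|<\varphi(i)$, and in the same step relocates \emph{all} of $N_G(v_k)\cap C^k$ into $B$. Three things fall out simultaneously. First, since $A^*\subseteq A^k$ inductively, no vertex of $A^*$ is ever eligible for removal, so the surviving $A_1$ contains $A^*$. Second, the removal condition forces $d_{B^k}(v_k)>i-\varphi(i)\ge 2(1+\mu_i)\varphi(i)$, so $v_k$ is automatically $B$-good --- your entire second round (the sets $S_w$) is unnecessary. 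Third, each surviving $C$-vertex has no edge to $v_k$, so its $A$-degree is untouched and $A$-goodness is preserved inductively --- your third round disappears. Crucially, the number of $C$-vertices relocated per step is $|N_G(v_k)\cap C^k|<\varphi(i)$, so the total is at most $\sum_i\varphi(i)\,|W\cap V^i|$, precisely the $\varphi$-weighted quantity you do control. A short final step then imports a small set $R\subseteq C_1$ of neighbours into $A_1$ (not into $B$) to repair $d_{A_1}$ for the vertices of $A_1\setminus A^*$, at the same $\varphi$-weighted cost, and all the constants in (2b) go through.
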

	
	\begin{proof}
		Let $A\cup B\cup C$ be a tripartition of $V(G)$ given by the lemma.
		In what follows, beginning with $A\cup B\cup C$, we will define a sequence of operations that carefully relocate vertices.
		These operations will produce a series of tripartitions of $G$,
		each satisfying progressively stronger properties that bring us closer to the desired properties.
		Importantly, in each round, only a small fraction of vertices are affected.
		This ensures that the final tripartition retains a distribution very close to the original one.
		
		\medskip
		
		\noindent{\it Step 1: extracting a robust core inside $A$.}
		First, we identify a large subset $A^* \subseteq A$ satisfying property (2d).
		Later, we will see that this subset $A^*$ ensures that most vertices in $A$ remain unchanged throughout subsequent operations.
		To construct $A^*$, we apply Lemma~\ref{Lem:key} to $H=G[A]$.
		Let $I$ denote the set of all integers $1\leq i\leq n$ with $\varphi(i)>0$.
		Let $A_i:=A\cap V^i, \eta_i:=\mu_i$ and $a_i:=\varphi(i)$ for each $i\in I$.
		By the definition of $A_i^+$,
		we observe $$A_i^+ =\{v \in A_i : d_H(v) \geq 2(1+\eta_i)a_i\}=\{v\in A\cap V^i: d_A(v)\geq 2(1+\mu_i)\varphi(i)\}= A\cap V_A^i.$$
		Since $\eta=\min \eta_i\geq 2\varepsilon$ and $a_i=\varphi(i)\leq \frac{i}{4}$,
		it holds from \eqref{equ:indegree condition2} that
		$$\Omega:=(1+\frac{1}{\eta}) \sum_{i\in I} a_i \cdot|A_i\backslash A^+_i|\leq
		\frac{1}{\varepsilon} \sum_{i\in I} \frac{i}{4}\cdot |(A\cap V^i)\backslash V^i_A|\leq \frac{\varepsilon n}{1000}<|A|=|V(H)|,$$
		where the final inequality uses the assumption $|A|>\varepsilon n/1000$.
		By Lemma \ref{Lem:key}, there is a subset $A^*$ of $A$ such that
		\begin{equation}\label{equ:indeg deg(A*)}
			d_{A^*}(x)\ge \varphi(i) \text{~~for every } x\in A^*\cap V^i,
			\mbox{~~~and~~~}
			|A\backslash A^*|\le \sum_{i\in I} \varphi(i)\cdot |(A\cap V^i)\backslash A^*|\le \Omega\le \frac{\varepsilon n}{1000}.
		\end{equation}

		\medskip
		
		\noindent{\it Step 2: greedily removing bad vertices from $A$.}
		Next, we implement a greedy algorithm that iteratively moves a small proportion of vertices from $A \cup C$ into $B$.
		The algorithm produces an intermediate tripartition $V(G) = A_1 \cup B_1 \cup C_1$ satisfying $A^* \subseteq A_1$.
		Initially, let $(A^{0}, B^{0}, C^{0}) = (A, B, C)$.
		Given that $A^{k}\cup B^{k}\cup C^{k}$ is defined,
		if there exists a vertex $v_k\in A^{k}\cap V^i$ for some $i\in I$ satisfying
		\begin{equation}
			\label{equ:indeg alg}
			|N_G(v_k)\cap (A^{k}\cup C^{k})|<\varphi(i),
		\end{equation}
		then update $A^{k+1}\cup B^{k+1}\cup C^{k+1}$ by
		$$A^{k+1}=A^k-\{v_k\},~C^{k+1}=C^k\backslash N_G(v_k) \text{~~and~~} B^{k+1}=B^k\cup \{v_k\}\cup (N_G(v_k)\cap C^k);$$
		otherwise, terminate this algorithm and return the current tripartition as $A_1 \cup B_1 \cup C_1$.
		
		We analyze the properties of the tripartition $A_1 \cup B_1 \cup C_1$.
		By construction, for every iteration $k$ we have the following nested containments: $A_1\subseteq A^{k+1}\subseteq A^k\subseteq A$,
		$C_1\subseteq C^{k+1}\subseteq C^k\subseteq C$, and $B\subseteq B^k\subseteq B^{k+1}\subseteq B_1$.
		Moreover, every vertex $x\in A_1\cap V^i$ satisfies
		\begin{equation}
			\label{equ:indeg A'}
			|N_G(x)\cap (A_1\cup C_1)|\ge \varphi(i).
		\end{equation}
		Indeed, no vertex of $A^*$ can ever be selected as some $v_k$: if $x\in A^*\cap V^i$ and $x\in A^k$, then $A^*\subseteq A^k\cup C^k$ and hence
		$|N_G(x)\cap (A^k\cup C^k)|\ge d_{A^*}(x)\ge \varphi(i)$ by \eqref{equ:indeg deg(A*)}, contradicting \eqref{equ:indeg alg}.
		Therefore $A^*\subseteq A_1\subseteq A$, and
		\begin{equation}
			\label{equ:indeg A-A'}
			|A\backslash A_1|\le |A\backslash A^*|\le \frac{\varepsilon n}{1000},
		\end{equation}
		where $A\backslash A_1=\{v_k:k\ge 0\}$.
		Since each $v_k \in V^i$ satisfies $|N_G(v_k) \cap C^k| < \varphi(i)$ (by \eqref{equ:indeg alg}), we bound:
		\begin{equation}
			\label{equ:indeg C-C'}
			|C\backslash C_1|=\sum_{k} |N_G(v_k)\cap C^k|<\sum_{i\in I} \varphi(i)\cdot |(A\backslash A_1)\cap V^i|\le
			\sum_{i\in I} \varphi(i)\cdot |(A\backslash A^*)\cap V^i|\le \frac{\varepsilon n}{1000},
		\end{equation}
		where the last inequality follows from \eqref{equ:indeg deg(A*)}. Therefore,
		\begin{equation}
			\label{equ:indeg B'-B}
			|B_1\backslash B|=|A\backslash A_1|+|C\backslash C_1| \le \frac{\varepsilon n}{500}.
		\end{equation}
		
		We claim that for every $k\geq 0$,
		\begin{equation}
			\label{equ:indeg C^k}
			\text{ each vertex in } C^k \text{ is both } A^k\text{-good  and } B^k\text{-good}.
		\end{equation}
		We prove this by induction on $k$. The base case $k=0$ holds since every vertex in $C$ is $A$-good and $B$-good by assumption.
		Now suppose \eqref{equ:indeg C^k} holds for some $k \geq 0$. Consider any vertex $x \in C^{k+1} = C^k \setminus N_G(v_k)$.
		Since $x\in C^{k+1}\subseteq C^k$, by induction $x$ is $B^k$-good.
		Since $B^k \subseteq B^{k+1}$, this evidently implies that $x$ is also $B^{k+1}$-good.
		Moreover, since $x$ is not adjacent to $v_k$, crucially we have $d_{A^k}(x) = d_{A^{k+1}}(x)$,
		which implies that $x$ remains $A^{k+1}$-good.
		This proves \eqref{equ:indeg C^k} and in particular, implies that
		\begin{equation}
			\label{equ:indeg C'}
			\text{ each vertex in } C_1 \text{ is both } A_1\text{-good  and } B_1\text{-good}.
		\end{equation}
		
		We also claim that for every $k\geq 0$,
		\begin{equation}
			\label{equ:indeg B^k}
			\text{ each vertex in } B^{k+1}\backslash B^k \text{ is } B^k\text{-good}.
		\end{equation}
		Note that $B^{k+1}\backslash B^k =\{v_k\}\cup (N_G(v_k)\cap C^k)$, so by \eqref{equ:indeg C^k}
		it is enough to show that $v_k$ is $B^k$-good, i.e., $|N_{G}(v_k)\cap B^k|\geq 2(1+\mu_i)\varphi(i)$ when $v_k\in V^i$.
		By \eqref{equ:indeg alg}, we have $|N_G(v_k)\cap B^k|=i-|N_G(v_k)\cap (A^k\cup C^k)|> i-\varphi(i)$.
		Using \eqref{equ:phi2}, we compute
		\begin{align*}
			i-\varphi(i)-2(1+\mu_i)\varphi(i)
			&=\left(1-(3+2\mu_i)\left(\frac{1-c}{4}-\frac{\mu_i}{2}\right)\right)i\\
			&=\left(\frac{1+3c}{4}+\left(1+\frac{c}{2}\right)\mu_i+\mu_i^2\right)i\ge 0.
		\end{align*}
		Hence $i-\varphi(i)\ge 2(1+\mu_i)\varphi(i)$, proving \eqref{equ:indeg B^k}.
		Since $B^k$ grows to $B_1$ as $k$ increases, we derive from \eqref{equ:indeg B^k} that
		\begin{equation}
			\label{equ:indeg B-good}
			\text{ each vertex in } B_1\backslash B \text{ is } B_1\text{-good}.
		\end{equation}

		\medskip
		
		\noindent{\it Step 3: repairing the remaining deficits in $A_1$.}
		Let us point out that the tripartition $A_1 \cup B_1 \cup C_1$ satisfies all desired properties of Lemma~\ref{Lem:indegree-tripar2}, except for property (2d).
		To fix this, we move some vertices from $C_1$ into $A_1$ as follows.
		For every $x\in A_1\cap V^i$,
		if $d_{A_1}(x)<\varphi(i)$, then in view of \eqref{equ:indeg A'} there exists a subset $R_x$
		containing exactly $\varphi(i)-d_{A_1}(x)$ neighbors of $x$ in $C_1$;
		otherwise $d_{A_1}(x)\geq \varphi(i)$, let $R_x=\emptyset$.
		Note that if $R_x\neq \emptyset$ for some $x\in A_1$, then $d_{A^*}(x)\le d_{A_1}(x)<\varphi(i)$,
		which together with \eqref{equ:indeg deg(A*)} imply that $x\in A_1\backslash A^*$.
		We then move all vertices in the set
		\begin{equation}\label{equ:indeg R}
			R:=\cup_{x\in A_1} R_x=\cup_{x\in A_1\backslash A^*} R_x
		\end{equation}
		from $C_1$ into $A_1$.
		This results in a new tripartition, denoted by $V(G)=A_2\cup B_2\cup C_2$, where
		$$A_2=A_1\cup R,~ B_2=B_1, \text{ and } C_2=C_1\backslash R.$$
		
		We now show $A_2\cup B_2\cup C_2$ is the desired tripartition for this lemma.
		Clearly, $B\subseteq B_1=B_2$ and $C_2\subseteq C_1\subseteq C$, thus property (2a) holds.
		For every $x\in A_1\cap V^i$, from the construction we see $d_{A_2}(x)\geq \varphi(i)$.
		By \eqref{equ:indeg C'}, every vertex in $R\subseteq C_1$ is $A_1$-good.
		Therefore, every vertex $x\in A_2\cap V^i$ satisfies that $d_{A_2}(x)\ge \varphi(i)$,
		establishing property (2d) for $A_2\cup B_2\cup C_2$.
		Since $A_1\subseteq A_2$ and $B_1=B_2$,
		every $A_1$-good (or $B_1$-good) vertex remains $A_2$-good (or $B_2$-good).
		Therefore \eqref{equ:indeg C'} and \eqref{equ:indeg B-good} imply properties (2c) and (2e) for $A_2\cup B_2\cup C_2$, respectively.
		It remains to show property (2b) for $A_2\cup B_2\cup C_2$.
		To do so, we need to estimate the size of $R$.
		We can derive that
		$$|R|\le \sum_{x\in A_1\backslash A^*} |R_x|\le \sum_{i} \varphi(i)\cdot |(A\cap V^i)\backslash A^*|\le \frac{\varepsilon n}{1000},$$
		where the first inequality follows by \eqref{equ:indeg R}, the second one follows by the fact $A_1\subseteq A$ and by the construction that $|R_x|\leq \varphi(i)$ for each $x\in A_1\cap V^i$,
		and the last one follows by \eqref{equ:indeg deg(A*)}.
		This upper bound on $|R|$, together with \eqref{equ:indeg A-A'}, \eqref{equ:indeg C-C'}, and \eqref{equ:indeg B'-B},
		establishes property (2b) for the tripartition $A_2 \cup B_2 \cup C_2$, thereby completing the proof of Lemma~\ref{Lem:indegree-tripar2}.
	\end{proof}

	We are ready to complete the proof of Theorem~\ref{Thm:min-indegree}.
	
	\medskip
	
	\noindent {\bf Proof of Theorem~\ref{Thm:min-indegree}.}
	Fix constants $c, \varepsilon$ with $0\leq c<1$ and $0<\varepsilon\leq \frac{1-c}{4}.$
	Let $G$ be a graph with $n\geq n_0(c,\varepsilon)$ vertices.
	By Lemma~\ref{Lem:indegree-tripar1}, there exists a tripartition $V(G)=A\cup B\cup C$ such that
	$\left(\frac{1-c}{2}-\frac{11}{10}\varepsilon\right)n\leq |A|,|B|\leq \left(\frac{1-c}{2}-\frac{9}{10}\varepsilon\right)n$,
	every vertex in $C$ is both $A$-good and $B$-good, and
	\begin{equation*}
		\sum_{i\ge 1} i\cdot |V^i\backslash (V^i_A\cap V^i_B)|\leq \frac{\varepsilon^2n}{10^4}.
	\end{equation*}
	Since $A\cap B=\emptyset$, this clearly implies that
	\begin{equation}\label{equ:indegethm1}
		\sum_{i\ge 1} i\cdot |(A\cap V^i)\backslash V^i_A|+\sum_{i\ge 1} i\cdot |(B\cap V^i)\backslash V^i_B|\le \frac{\varepsilon^2n}{10^4}.
	\end{equation}
	Moreover, the size bound from Lemma~\ref{Lem:indegree-tripar1} and the assumption $\varepsilon\leq \frac{1-c}{4}$ imply that
	$$|A|\ge \left(\frac{1-c}{2}-\frac{11}{10}\varepsilon\right)n>\frac{\varepsilon n}{1000}.$$
	So the tripartition $A\cup B\cup C$ satisfies all assumptions of Lemma~\ref{Lem:indegree-tripar2}.
	Using Lemma~\ref{Lem:indegree-tripar2}, there exists a tripartition $V(G)=A'\cup B'\cup C'$ satisfying properties (2a)-(2e).
	In particular, we can derive that
	$\left(\frac{1-c}{2}-\frac{6}{5}\varepsilon\right)n\leq |A'|,|B'|\leq \left(\frac{1-c}{2}-\frac{4}{5}\varepsilon\right)n$,
	every vertex in $C'$ is $A'$-good and $B'$-good, and
	\begin{equation}\label{equ:indegethm2}
		\text{ every } x\in A'\cap V^i \text{ satisfies } d_{A'}(x)\ge \varphi(i).
	\end{equation}
	Moreover, using property (2e) and the above inequality \eqref{equ:indegethm1}, we can obtain that
	\begin{equation*}
		\sum_{i\geq 1} i\cdot |(B'\cap V^i)\backslash V^i_{B'}|\leq \sum_{i\geq 1} i\cdot |(B\cap V^i)\backslash V^i_B|\leq \frac{\varepsilon^2n}{10^4}.
	\end{equation*}
	
	Since $\varepsilon\leq \frac{1-c}{4}$ and $|B'|\ge \left(\frac{1-c}{2}-\frac{4}{5}\varepsilon\right)n$, we also have $|B'|>\frac{\varepsilon n}{1000}$.
	By swapping the roles of $A'$ and $B'$, we see that the tripartition $A'\cup B'\cup C'$ also
	satisfies the assumptions of Lemma \ref{Lem:indegree-tripar2}.
	Applying Lemma \ref{Lem:indegree-tripar2} again (i.e., exchanging the roles of $A'$ and $B'$ here),
	we can find a new tripartition $V(G)=A''\cup B''\cup C''$ such that the following properties hold:
	\begin{itemize}
		\item[(3a).] $A'\subseteq A''$ and $C''\subseteq C'$,
		\item[(3b).] $\left(\frac{1-c-3\varepsilon}{2}\right)n\leq |A''|,|B''|\leq \left(\frac{1-c-\varepsilon}{2}\right)n$,
		\item[(3c).] each vertex in $C''$ is both $A''$-good and $B''$-good,
		\item[(3d).] every vertex $x\in B''\cap V^i$ satisfies $d_{B''}(x)\ge \varphi(i)$, and
		\item[(3e).] every vertex in $A''\backslash A'$ is $A''$-good.
	\end{itemize}
	
	Since $A' \subseteq A''$ (from property (3a)), we can derive from \eqref{equ:indegethm2} and property (3e) that
	\begin{equation*}\label{equ:indegethm6}
		\text{ every vertex } x\in A''\cap V^i \text{ satisfies } d_{A''}(x)\ge \varphi(i).
	\end{equation*}
	Using this, along with properties (3b) through (3d), we conclude that the obtained tripartition $V(G) = A'' \cup B'' \cup C''$ satisfies all conditions of Theorem~\ref{Thm:min-indegree}, thereby completing the proof.
	\qed
	
	\section{Finding bisections with external degree constraints}\label{sec:ext}
	\noindent
	The main result of this section, Theorem~\ref{Thm:min-outdegree}, establishes graph tripartitions satisfying external degree constraints.
	It serves as the natural external-degree analogue of Theorem~\ref{Thm:min-indegree} (which concerns internal degree constraints).
	From a probabilistic perspective, Theorem~\ref{Thm:min-outdegree} asserts that for any $c \in [0,1)$, every $n$-vertex graph $G$ admits a tripartition $V(G) = A \cup B \cup C$ with approximately prescribed part sizes $\left(\frac{1-c}{2}\right)n$, $\left(\frac{1-c}{2}\right)n$, and $cn$, respectively, such that every vertex in $A\cup B$ has at least roughly half of its expected neighbors in the other part, and every vertex in $C$ has approximately the expected number of neighbors in both $A$ and $B$.
	As an immediate consequence, by suitably redistributing vertices from $C$ to $A$ and $B$,
	we obtain corresponding results for bisections with external degree constraints.


	For disjoint subsets $A$ and $B$ of $V(G)$, we note that $(A,B)$ denotes the induced bipartite subgraph of $G$ with parts $A$ and $B$.
	Given constants $c\in [0,1)$ and sufficiently small $\varepsilon\in (0,1)$,
	we introduce the following function (which is slightly different from the one in \eqref{equ:phi}):
	\begin{align}\label{equ:psi}
		\psi_{c,\varepsilon}(i)=\left(\frac{1-c}{4}\right)i-\left(2d_{c,\varepsilon}\cdot i^{\frac{1}{2}(1+\varepsilon)}+\varepsilon\cdot i\right),
	\end{align}
	where $d_{c,\varepsilon}:=d_{\varepsilon,1-c}=1000/(\sqrt{1-c}\cdot \varepsilon^2)$ is the constant supplied by Lemma~\ref{Lem:exist C} with $\rho=1-c$, so that
	\begin{align}\label{equ:sum-i-ext}
		\sum_{i=1}^{+\infty} i\cdot \exp(-d_{c,\varepsilon}^2i^{\varepsilon})\leq \frac{(1-c)\varepsilon^2}{10^5}.
	\end{align}
	Notably, we have $\psi_{c, \varepsilon}(i) \to \left(\frac{1-c}{4}\right)i - o(i)$ as $\varepsilon\to 0$.
	The formal statement is as follows.

	\begin{theorem}\label{Thm:min-outdegree}
		Let $c, \varepsilon$ be constants with $0\leq c<1$ and $0<\varepsilon\leq \frac{1-c}{10}.$
		Then there exists $n_0(c,\varepsilon)$ such that every graph $G$ with $n\geq n_0(c,\varepsilon)$ vertices admits a tripartition $V(G)=A\cup B\cup C$ such that
		\begin{itemize}
			\item[(a).] $\left(\frac{1-c-3\varepsilon}{2}\right)n\leq |A|,|B|\leq \left(\frac{1-c-\varepsilon}{2}\right)n$,
			\item[(b).] $d_{(A,B)}(v)\geq \psi_{c,\varepsilon}(d_G(v))$ for every vertex $v\in A\cup B$,
			\item[(c).] $d_{A}(v)\geq 2\cdot \psi_{c,\varepsilon}(d_G(v))$ and $d_B(v)\geq 2\cdot \psi_{c,\varepsilon}(d_G(v))$ for every vertex $v\in C$
		\end{itemize}
	\end{theorem}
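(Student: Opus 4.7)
The proof of Theorem~\ref{Thm:min-outdegree} will follow the same two-stage framework as Theorem~\ref{Thm:min-indegree}: an initial random partitioning that produces a tripartition in which most vertices are externally good with a controlled weighted bad-sum, followed by a deterministic refinement that fixes the remaining bad vertices. The random step is essentially a direct translation of Lemma~\ref{Lem:indegree-tripar1}; the deterministic step is where the external case substantially diverges from the internal one.

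For the first stage, set $\mu_i := 4d_{c,\varepsilon}\cdot i^{(\varepsilon-1)/2}+2\varepsilon$ and $\psi(i):=\psi_{c,\varepsilon}(i)$, and given a tripartition $V(G)=A\cup B\cup C$ say $v\in V^i$ is \emph{externally good} if both $d_A(v)\ge 2(1+\mu_i)\psi(i)$ and $d_B(v)\ge 2(1+\mu_i)\psi(i)$ hold. The plan is to place each vertex of $G$ independently into $A,B,C$ with probabilities $(1-c)/2-\varepsilon$, $(1-c)/2-\varepsilon$, and $c+2\varepsilon$, respectively, and run the Chernoff--Markov argument from the proof of Lemma~\ref{Lem:indegree-tripar1} essentially verbatim, using \eqref{equ:sum-i-ext} in place of Lemma~\ref{Lem:exist C}. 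This will yield, with positive probability, a tripartition whose parts satisfy the size bounds in condition (a) and whose weighted externally-bad sum (i.e., $\sum_{i\ge 1} i$ times the number of externally-bad vertices in $V^i$) is at most $(1-c)\varepsilon^2 n/10^4$; the $(1-c)$ factor in \eqref{equ:sum-i-ext} is precisely what is needed to absorb the $(1-c)/2$ appearing in the binomial variance. We will then push all externally-bad vertices currently in $C$ into $A\cup B$ arbitrarily, obtaining a Stage-1 tripartition in which every vertex of $C$ is externally good and the weighted bad-sum restricted to $A\cup B$ is still at most $(1-c)\varepsilon^2 n/10^4$.

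The main obstacle arises in Stage 2. In the internal refinement of Lemma~\ref{Lem:indegree-tripar2}, flipping a bad vertex $v$ from $A$ to $B$ is benign: vertices in $B$ only gain $v$ as an internal neighbor, and vertices in $A$ never relied on $v$ for their own internal degree. In the external setting, the same flip strictly removes an external neighbor from every $u\in B$ with $uv\in E(G)$, potentially cascading into new externally-bad vertices. Consequently, the flip-and-drag algorithm of Lemma~\ref{Lem:indegree-tripar2} does not port over, and a fundamentally different refinement strategy is needed.

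Our plan for Stage 2 is to apply Lemma~\ref{Lem:key} to the bipartite graph $H:=(A,B)_G$, with $A_i:=V^i\cap (A\cup B)$, $\eta_i:=\mu_i$, and $a_i:=\psi(i)$. Since $d_H(v)=d_{(A,B)}(v)$ for every $v\in A\cup B$, the set $A_i^+$ in Lemma~\ref{Lem:key} contains every externally-good vertex of $A_i$, so $|A_i\setminus A_i^+|$ is controlled by the externally-bad count from Stage 1; combined with $|V(H)|\ge (1-c-3\varepsilon)n$ and $(1+1/\eta)\le 1/\varepsilon$, this verifies hypothesis \eqref{equ:key}. Lemma~\ref{Lem:key} then yields an induced sub-bipartite-graph $H'$ in which every $v\in V^i\cap V(H')$ has at least $\psi(i)$ neighbors on the opposite side restricted to $V(H')$, and the removed set $W:=(A\cup B)\setminus V(H')$ has small total weighted size. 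Setting $A':=A\cap V(H')$ and $B':=B\cap V(H')$ immediately yields condition (b) on $A'\cup B'$. The hard part will be redistributing $W$ among $A', B', C$ to simultaneously maintain the part-size bounds in (a) and the external-goodness of $C$-vertices in (c), since a $C$-vertex $v$ absorbing too many $W$-neighbors on one side may drop below the $2\psi(i)$ threshold. We anticipate handling this by exploiting the $2\mu_i\psi(i)$ margin built into the externally-good definition, together with a charging argument that tracks the Stage-1 weight budget across a bounded number of cleanup rounds, possibly by re-invoking Lemma~\ref{Lem:key} on the tripartition obtained after an initial absorption of $W$ into $C$.
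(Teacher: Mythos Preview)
Your two-stage framework and the decision to apply Lemma~\ref{Lem:key} to the bipartite graph $H=(A,B)_G$ are exactly right, and this is precisely what the paper does. However, your Stage~2 plan has two genuine gaps that prevent it from closing.

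First, taking $a_i=\psi(i)$ in Lemma~\ref{Lem:key} is not enough. The lemma only controls $\sum_i a_i|W\cap V^i|$, not $\sum_i i\,|W\cap V^i|$; for $i\in I$ near the threshold where $\psi(i)\to 0^+$ the ratio $i/\psi(i)$ is unbounded, so you cannot bound the total degree $\sum_{x\in W} d_G(x)$ of the removed set. The paper fixes this by replacing $\psi(i)$ with $\psi^*(i):=\max\{\psi(i),\,(1-c)i/8\}$ (and correspondingly adjusting $\eta_i$ and the Stage-1 goodness definition in Lemma~\ref{Lem:outdegree-tripar}), which forces $i\le \tfrac{8}{1-c}a_i$ and hence yields the degree-weighted bound \eqref{equ:outdeg 3}. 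Without this you cannot afford any step that touches neighborhoods of $W$.

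Second, your redistribution sketch (``exploit the $2\mu_i\psi(i)$ margin, charge, possibly re-invoke Lemma~\ref{Lem:key}'') does not work: a single $C$-vertex can lose arbitrarily many $A$-neighbors to $W$, wiping out any fixed margin, and re-invoking Lemma~\ref{Lem:key} says nothing about vertices now sitting in $C$. The paper's actual mechanism is quite different. One first enlarges $W$ to $W_1:=W\cup\bigcup_{x\in W}(N_G(x)\cap Z)$, so that every vertex remaining in $Z_1:=Z\setminus W_1$ has \emph{no} neighbor in $W$ and therefore keeps its $X_1$- and $Y_1$-goodness exactly; the size of $W_1$ is controlled precisely by the degree-weighted bound that the $\psi^*$ trick supplies. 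One then checks that every $x\in W_1\cap V^i$ has at least $4\psi(i)$ neighbors inside $X_1\cup Y_1\cup W_1$, runs a greedy pass moving any $x$ with $d_{X_2}(x)\ge\psi(i)$ into $Y_2$ (and symmetrically), and finally takes a max-cut of the residual $G[W_2]$---whose vertices now have at least $2\psi(i)$ neighbors inside $W_2$---to split it across the two sides. Neither the neighbor-absorption into $W_1$ nor the greedy-plus-max-cut endgame is hinted at in your proposal, and without them Stage~2 does not go through.
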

	
	The remainder of this section is organized as follows.
	In Subsection~\ref{subsec:ext-bisec-exact}, we derive Theorems~\ref{Thm:ext-bisec} and~\ref{Thm:ext-bisec-exact}, along with Corollaries~\ref{coro:ext-bisec-main} and~\ref{coro:ext-bisec}, from Theorem~\ref{Thm:min-outdegree}.
	Subsection~\ref{subsec:pf-ext-bisec} then contains the proof of Theorem~\ref{Thm:min-outdegree}.
	
	\subsection{Proofs of Theorems~\ref{Thm:ext-bisec} and~\ref{Thm:ext-bisec-exact}, and Corollaries~\ref{coro:ext-bisec-main} and~\ref{coro:ext-bisec} via Theorem~\ref{Thm:min-outdegree}}\label{subsec:ext-bisec-exact}
	\noindent
	Assuming Theorem~\ref{Thm:min-outdegree}, we now derive the corresponding external-degree results.
	As in Subsection~\ref{subsec:int-bisec-exact}, the proofs are short once the tripartition theorem is available.
	
	\begin{proof}[\bf Proof of Theorem~\ref{Thm:ext-bisec}.]
		It suffices to prove the following statement: for arbitrarily small $\varepsilon>0$,
		there exists an integer $i_0:=i_0(\varepsilon)$ such that every graph $G$ has a bisection where every vertex $v$ with $d_G(v)\geq i_0$ has at least $\left(\frac{1}{4}-\varepsilon\right)d_G(v)$ neighbors in its opposite part.
		
		Fix $c=0$, and let $n_0(0,\varepsilon/2)$ be the constant from Theorem~\ref{Thm:min-outdegree} (with parameters $c=0$ and $\varepsilon/2$).
		Consider any graph $G$ with at least $i_0$ vertices, where $i_0\geq n_0(0,\varepsilon/2)$ is sufficiently large.
		Let $V^i$ denote the set of vertices with degree $i$ in $G$.
		From \eqref{equ:psi}, we see that for all $i\geq i_0$, it holds
		$$\psi_{0,\varepsilon/2}(i)=\frac{i}{4}-\left(2d_{0,\varepsilon/2}\cdot i^{\frac{1}{2}(1+\varepsilon/2)}+(\varepsilon/2)\cdot i\right)\geq \left(\frac{1}{4}-\varepsilon\right)i.$$
		Applying Theorem~\ref{Thm:min-outdegree} with parameters $c=0$ and $\varepsilon/2$, we obtain a tripartition $V(G)=A\cup B\cup C$ satisfying $|A|\leq \frac{n}{2}$, $|B|\leq \frac{n}{2}$,
		every vertex $v\in (A\cup B)\cap V^i$ has at least $\psi_{0,\varepsilon/2}(i)$ neighbors in its opposite part,
		and every vertex $z\in C\cap V^i$ has at least $2\psi_{0,\varepsilon/2}(i)$ neighbors in both $A$ and $B$.
		By distributing the vertices of $C$ between $A$ and $B$ appropriately, we form a bisection $A'\cup B'$ of $G$.
		Every vertex $v$ with $d_G(v)\geq i_0$ then has at least $\psi_{0,\varepsilon/2}(d_G(v))\geq \left(\frac{1}{4}-\varepsilon\right)d_G(v)$ neighbors in its opposite part, completing the proof.
	\end{proof}
	
	\begin{proof}[\bf Proof of Theorem~\ref{Thm:ext-bisec-exact}.]
		Fix constants $0\leq c<1$ and $0<\varepsilon\leq 1-c$.
		Set $\varepsilon'=(1-c)^2\varepsilon/40$.
		Let $G$ be a graph with $\delta(G)\geq \left(\frac{4}{1-c}+\varepsilon\right)k$, where $k\geq n_0(c,\varepsilon')$ is sufficiently large.
		Applying Theorem~\ref{Thm:min-outdegree} with parameters $c$ and $\varepsilon'$ gives a tripartition $V(G)=A\cup B\cup C$ satisfying
		$$\left(\frac{1-c-\varepsilon}{2}\right)n\leq \left(\frac{1-c-3\varepsilon'}{2}\right)n\leq |A|,|B|\leq \left(\frac{1-c-\varepsilon'}{2}\right)n\leq \left(\frac{1-c}{2}\right)n,$$
		every vertex $v\in A\cup B$ with degree $i$ in $G$ has at least
		$$\psi_{c,\varepsilon'}(i)\geq \left(\frac{1-c}{4}-2\varepsilon'\right)i\geq \left(\frac{1-c}{4}-2\varepsilon'\right)\cdot \left(\frac{4}{1-c}+\varepsilon\right)k\geq k$$
		neighbors in its opposite part,
		and every vertex $z\in C$ has at least $2\psi_{c,\varepsilon'}(d_G(z))\geq 2k$ neighbors in both $A$ and $B$.
		This is exactly the required tripartition.
	\end{proof}
	
	\begin{proof}[\bf Proof of Corollaries~\ref{coro:ext-bisec-main} and \ref{coro:ext-bisec}.]
		First we consider Corollary~\ref{coro:ext-bisec-main}.
		Fix $\varepsilon\in (0,1)$.
		Set $c=1-\varepsilon$ and let $k_1(\varepsilon)=k_0(c,\varepsilon)$ be the constant obtained from Theorem~\ref{Thm:ext-bisec-exact}.
		It suffices to prove the statement of Corollary~\ref{coro:ext-bisec-main} for sufficiently large integers $k\geq k_1(\varepsilon)$.
		Let $G$ be a graph with minimum degree at least $c_\varepsilon\cdot k$, where $c_\varepsilon:=\frac{4}{\varepsilon}+\varepsilon$.
		Using Theorem~\ref{Thm:ext-bisec-exact}, there exists a tripartition $V(G)=A\cup B\cup C$ such that
		$|A|,|B|\leq \frac{\varepsilon n}{2}$, $|C|\geq (1-\varepsilon)n$,
		every vertex in $A\cup B$ has at least $k$ neighbors in the opposite part,
		and every vertex in $C$ has at least $2k$ neighbors in both $A$ and $B$.
		By distributing the vertices of $C$ between $A$ and $B$ appropriately, we obtain a bisection $A'\cup B'$
		such that every vertex has at least $k$ neighbors in the opposite part,
		and every vertex from $C$ has at least $k$ neighbors in its own part.
		Since $|C|\geq (1-\varepsilon)n$,
		at least $(1-\varepsilon)n$ vertices satisfy condition (ii), proving Corollary~\ref{coro:ext-bisec-main}.
		
		For Corollary~\ref{coro:ext-bisec}, its first assertion follows easily from Theorem~\ref{Thm:ext-bisec-exact} with $c=0$. 
        To see the ``moreover'' part, we apply Theorem~\ref{Thm:ext-bisec-exact} with $c=\frac14$.
		Let $k\geq k_0(\frac14,\varepsilon)$ and let $G$ be a graph with minimum degree at least $\left(\frac{16}{3} + \varepsilon\right)k$.
		Theorem~\ref{Thm:ext-bisec-exact} implies the existence of a tripartition $V(G)=A\cup B\cup C$ such that $|A|,|B|\leq \frac{3n}{8}$,
		$|C|\geq \frac{n}{4}$, every vertex in $A\cup B$ has at least $k$ neighbors in  the opposite part,
		and every vertex in $C$ has at least $2k$ neighbors in both $A$ and $B$.
		Partition $C=C_1\cup C_2$, with $|C_1|=\lfloor\frac{n}{2}\rfloor-|A|$.
		Let $A'=A\cup C_1$ and $B'=B\cup C_2$.
		Then $A'\cup B'$ forms a bisection of $G$.
		It is evident that $(A',B')_G$ has minimum degree at least $k$.
        Note that both $C_1$ and $C_2$ have size at least $\frac{n}{8}$. 
        So we have $e(A')\geq 2k\cdot |C_1|\geq \frac{kn}{4}$ and thus $A'$ has average degree at least $k$.
        Similarly, $B'$ has average degree at least $k$.
		Hence, $A'\cup B'$ is the desired bisection.
	\end{proof}
	
	The proof of Theorem~\ref{Thm:min-outdegree} follows the same two-stage framework as Theorem~\ref{Thm:min-indegree},
	consisting of an initial random partitioning step and a deterministic refinement step.
	The first step remains nearly identical, though it requires a modified version of Lemma~\ref{Lem:indegree-tripar1}.
	The second step, however, requires substantially different strategies for relocating vertices between parts.
	It is important to emphasize that both proofs rely on the same foundational fact that all vertex relocation operations and their effects are effectively controlled by the quantity defined in property (1c) of Lemma~\ref{Lem:indegree-tripar1} (or Lemma~\ref{Lem:outdegree-tripar}).
	
	\subsection{Proof of Theorem~\ref{Thm:min-outdegree}}\label{subsec:pf-ext-bisec}
	\noindent
	Throughout the rest of this section, we fix constants $c\in [0,1)$ and $\varepsilon\in (0,\frac{1-c}{10}]$.
	We have an equivalent expression for $\psi(i):=\psi_{c,\varepsilon}(i)$ as follows
	\begin{align}\label{equ:psi-2}
		\psi(i)=\left(\frac{1-c}{4}-\frac{\mu_i}{2}\right)\cdot i, \mbox{~~where~~} \mu_i=4d_{c,\varepsilon}\cdot i^{\frac{1}{2}(\varepsilon-1)}+2\varepsilon.
	\end{align}
	Let $G$ be a graph with $n$ vertices, where $n\geq n_0(c,\varepsilon)$ is sufficiently large.
	Let $V^i$ be the set of all vertices of degree $i$ in $G$.
	Let $I$ denote the set of all integers $i$ with $\psi(i)>0$.
	As we observed earlier, it suffices to consider vertices $v\in V^i$ with $i\in I$;
	we say such vertices are {\it active}.
	
	We need a modified version of Lemma~\ref{Lem:indegree-tripar1}.
	A crucial technical requirement arises later in the deterministic refinement step,
	where we need $i = O(\psi(i))$ (see the upcoming \eqref{equ:outdeg 3}), which does not hold for small values of $i$.
	This obstacle necessitates replacing $\psi(i)$ with the following modified function
	\begin{align}\label{equ:phi*}
		\psi^*(i)=\max\left\{\psi(i),~\left(\frac{1-c}{8}\right)i\right\} \mbox{ for all } i\in I.
	\end{align}
	This modification leads to a revised definition of ``$S$-goodness'' for subsets $S$ of $V(G)$, along with corresponding adjustments to Lemma~\ref{Lem:indegree-tripar1}.
	To be precise, for each $i\in I$, we define
	\begin{equation}\label{equ:eta}
		\lambda_i=\frac{4d_{c,\varepsilon}}{1-c}\cdot i^{\frac{1}{2}(\varepsilon-1)}
		\mbox{ ~ and ~ }
		\eta_i=\lt\{\begin{array}{ll}
			\mu_i & \text{ if } \psi(i)\geq \left(\frac{1-c}{8}\right)i \\
			4\varepsilon\lambda_i/(1-c) & \text{ if } \psi(i)< \left(\frac{1-c}{8}\right)i
		\end{array}\rt.
	\end{equation}
	In this section, for a subset $S\subseteq V(G)$,
	we say an active vertex $x\in V^i$ is {\bf $S$-good} if
	\begin{equation}\label{def:outdegree S-good}
		d_S(x)\geq 2(1+\eta_i)\cdot \psi^*(i)=\lt\{\begin{array}{ll}
			2(1+\mu_i)\cdot\psi(i) & \text{ if } \psi(i)\geq \left(\frac{1-c}{8}\right)i \\
			((1-c)/4+\varepsilon\lambda_i)\cdot i & \text{ if } \psi(i)< \left(\frac{1-c}{8}\right)i.
		\end{array}\rt.
	\end{equation}
	For $i\in I$, we denote the set of all $S$-good vertices in $V^i$ by $V^i_S$.
	
	The following lemma presents a modified version of Lemma~\ref{Lem:indegree-tripar1}.
	We provide a sketch of the proof.
	
	\begin{lemma}\label{Lem:outdegree-tripar}
		The graph $G$ has a tripartition $V(G)=X\cup Y\cup Z$ such that the following hold:
		\begin{itemize}
			\item [(a)] $(\frac{1-c}{2}-\frac{11}{10}\varepsilon)n\le |X|,|Y|\le (\frac{1-c}{2}-\frac{9}{10}\varepsilon)n$,
			\item [(b)]  every active vertex in $Z$ is both $X$-good and $Y$-good, and
			\item [(c)] $\sum_{i\in I} i\cdot |V^i\backslash (V^i_X\cap V^i_Y)|\leq \frac{(1-c)\varepsilon^2n}{10^4}.$
		\end{itemize}
	\end{lemma}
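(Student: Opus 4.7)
The plan is to imitate the proof of Lemma~\ref{Lem:indegree-tripar1}, starting from a random tripartition $V(G)=X\cup Y\cup Z$ in which each vertex is placed independently in $X,Y,Z$ with probabilities $\frac{1-c}{2}-\varepsilon,\frac{1-c}{2}-\varepsilon,c+2\varepsilon$, respectively. A Chernoff bound applied to $|X|,|Y|\sim\Bin(n,\frac{1-c}{2}-\varepsilon)$ handles the size estimate of (a) up to a relative deviation of $\varepsilon/10$, with failure probability at most $1/4$. For (c), I will bound $\E[W]$ where $W:=\sum_{i\in I}i\cdot|V^i\setminus(V_X^i\cap V_Y^i)|$ and then apply Markov's inequality; a union bound will extract a concrete tripartition satisfying versions of (a) and (c) with enough slack for subsequent adjustments.

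The essential task is to estimate $p_i:=\Prob(v\notin V_X^i\cap V_Y^i)$ for an active $v\in V^i$, and this splits according to the two branches of \eqref{def:outdegree S-good}. In the regime $\psi(i)\ge\frac{1-c}{8}i$, the threshold $2(1+\mu_i)\psi(i)$ coincides with the one from the internal setting, so the derivation leading to \eqref{equ:int-bise-mean} and \eqref{equ:pi} transfers essentially verbatim and yields $p_i\le 4\exp(-d_{c,\varepsilon}^2 i^{\varepsilon})$, with the constant $d_{c,\varepsilon}$ chosen specifically so that \eqref{equ:sum-i-ext} holds. In the complementary regime $\psi(i)<\frac{1-c}{8}i$, the inequality $\mu_i>(1-c)/4$ together with $\varepsilon\le(1-c)/10$ forces $i$ to lie below some absolute constant $i_0(c,\varepsilon)$, and the goodness threshold simplifies to $\bigl(\tfrac{1-c}{4}+\varepsilon\lambda_i\bigr)i$. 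Since activity $\psi(i)>0$ is equivalent to $\mu_i<(1-c)/2$, a direct calculation gives $\lambda_i<\tfrac{1}{2}$, so this threshold stays below the mean $\bigl(\tfrac{1-c}{2}-\varepsilon\bigr)i$ by a positive constant fraction of the mean; Chernoff then yields $p_i\le\exp(-\Omega(i))$, which dominates $\exp(-d_{c,\varepsilon}^2 i^{\varepsilon})$ once the activity lower bound on $i$ is used (large $d_{c,\varepsilon}$ forces a correspondingly large lower threshold for activity).

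Summing over both cases gives $\E[W]\le 4n\sum_{i\ge 1}i\exp(-d_{c,\varepsilon}^2 i^{\varepsilon})\le\frac{(1-c)\varepsilon^2 n}{25000}$ by \eqref{equ:sum-i-ext}, and Markov combined with the Chernoff size estimate fixes a concrete tripartition obeying (a) and $W\le\frac{(1-c)\varepsilon^2 n}{12500}$. To finish, as in Lemma~\ref{Lem:indegree-tripar1}, I move every active vertex of $Z$ that fails to be $X$-good or $Y$-good into $X\cup Y$, distributing these vertices arbitrarily between the two parts. Because $X$ and $Y$ only grow, no previously $X$- or $Y$-good vertex loses its status, so (c) is preserved; the number of relocated vertices is at most $W=O(\varepsilon^2 n)$, which perturbs $|X|$ and $|Y|$ only within the window required by (a). The main technical hurdle is the piecewise definition of $\psi^*$: properly handling the small-degree case requires verifying $\lambda_i<\tfrac{1}{2}$ for all active $i$, which is precisely what keeps the new goodness threshold strictly below the mean and allows the Chernoff argument to survive the shift from $\psi$ to $\psi^*$.
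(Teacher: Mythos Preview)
Your plan matches the paper's proof almost exactly: random tripartition with the same probabilities, Chernoff for the sizes, a tail bound on $p_i$ in each regime, Markov's inequality for the weighted sum, and finally the relocation of non-good active vertices from $Z$ into $X\cup Y$. The large-degree regime $\psi(i)\ge\frac{1-c}{8}i$ is handled identically.

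The one substantive difference is the small-degree regime $\psi(i)<\frac{1-c}{8}i$. The paper does \emph{not} pass through a constant relative gap and an activity lower bound on $i$. Instead it observes directly that, for active $i$, the goodness threshold satisfies
\[
\Bigl(\tfrac{1-c}{4}+\varepsilon\lambda_i\Bigr)i \;\le\; \Bigl(\tfrac{1-c}{2}-\varepsilon\Bigr)(1-\lambda_i)\,i,
\]
which follows from $\mu_i=(1-c)\lambda_i+2\varepsilon<\frac{1-c}{2}$. Hence the \emph{same} deviation parameter $\lambda_i$ that worked in the other regime applies here as well, and Chernoff gives $p_i\le 4\exp(-d_{c,\varepsilon}^2 i^{\varepsilon})$ with no further case analysis. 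This is cleaner than your route and yields the uniform bound on $p_i$ immediately.

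Your approach is also viable, but the specific assertion that $\exp(-\Omega(i))$ ``dominates $\exp(-d_{c,\varepsilon}^2 i^{\varepsilon})$'' needs care: the implicit constant in your $\Omega(i)$ is of order $1-c$ (coming from the constant relative gap and the mean), and does \emph{not} scale with $d_{c,\varepsilon}$. Tracing through, for $i\ge i_1$ (the activity threshold) one only gets $c_1 i\ge c_0\, d_{c,\varepsilon}^2 i^{\varepsilon}$ with an absolute constant $c_0\approx\tfrac{128}{375}<1$, so the pointwise domination fails as stated when $c$ is small. The fix is easy --- either enlarge $d_{c,\varepsilon}$ by a constant factor, or bound the small-$i$ contribution to $\E[W]$ separately --- but the paper's choice of $\lambda_i$ as the deviation parameter avoids this bookkeeping altogether.
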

	
	\begin{proof}
		Consider a random tripartition $V(G) = X \cup Y \cup Z$, where each vertex is independently placed in $X$, $Y$, and $Z$ with probabilities $\frac{1-c}{2} - \varepsilon$, $\frac{1-c}{2} - \varepsilon$, and $c + 2\varepsilon$, respectively.
		
		Consider the probability $p_i = \Prob\left(v \in V^i \setminus (V^i_X \cap V^i_Y)\right)$.
		It suffices to consider all $i\in I$, for which $\psi(i)>0$ and thus $\mu_i<\frac{1-c}{2}$.
		We claim that $$p_i\leq \exp(-d_{c,\varepsilon}^2i^{\varepsilon}) \mbox{ ~ holds for all~} i\in I.$$
		For $i\in I$ with $\psi(i)\geq \left(\frac{1-c}{8}\right)i$,
		we observe that the $S$-goodness here is identical to that of Section~3,
		hence through the same arguments of \eqref{equ:int-bise-mean} and \eqref{equ:pi},
		we can derive the conclusion that $p_i\leq \exp(-d_{c,\varepsilon}^2i^{\varepsilon})$.
		
		It remains to consider integers $i\in I$ satisfying $\psi(i) < \left(\frac{1-c}{8}\right)i$.
		This case requires separate treatment, which we address below.
		Note that $(1-c)\lambda_i+2\varepsilon=\mu_i< \frac{1-c}{2}$.
		So we have
		\begin{align}\label{equ:ext-bise-mean}
			\left(\frac{1-c}{2}-\varepsilon\right)(1-\lambda_i)=\frac{1-c}{2}-\frac{(1-c)\lambda_i+2\varepsilon}{2}+\varepsilon \lambda_i\geq \frac{1-c}4+\varepsilon \lambda_i.
		\end{align}
		For any $v\in V^i$,
		both $d_X(v)$ and $d_Y(v)$ are distributed as $W=\Bin(i,\frac{1-c}{2}-\varepsilon)$.
		By \eqref{def:outdegree S-good} and \eqref{equ:ext-bise-mean},
		\begin{align}\notag
			p_i&\le 2\cdot\Prob\left(W<((1-c)/4+\varepsilon\lambda_i)\cdot i\right)\le 2\cdot \Prob\left(W< \left(\frac{1-c}{2}-\varepsilon\right)(1-\lambda_i)i\right) \\\notag
			&=2\cdot\Prob(W<(1-\lambda_i)\E[W])\le 4\cdot \exp(-\lambda_i^2\E[W]/3)\le 4\cdot \exp(-d_{c,\varepsilon}^2i^{\varepsilon}),
		\end{align}
		where the last inequality follows by the same proof as in \eqref{equ:pi}.
		This proves the above claim.
		
		Now we consider $$P=\sum_{i\in I} i\cdot |V^i\backslash (V^i_X\cap V^i_Y)|.$$
		Using the above claim and \eqref{equ:sum-i-ext}, we derive that its expectation satisfies
		$$\E[P]\leq \left(\sum_{i\in I} i\cdot \exp(-d_{c,\varepsilon}^2i^{\varepsilon})\right)\cdot 4n\leq \frac{(1-c)\varepsilon^2n}{25000}.$$
		Now, by following the same arguments as in the remainder of the proof of Lemma~\ref{Lem:indegree-tripar1}, we obtain the desired tripartition.
		We emphasize that, for our purposes, it suffices to consider only the active vertices; that is why property (c) sums only over $i \in I$.
	\end{proof}
	
	With the initial tripartition prepared in Lemma~\ref{Lem:outdegree-tripar},
	we now present the proof of Theorem~\ref{Thm:min-outdegree}.
	The deterministic refinement has the following shape.
	We first apply Lemma~\ref{Lem:key} to the bipartite graph induced by the two large parts in order to extract a large bipartite core whose vertices already have the desired external degree.
	The discarded vertices, together with their neighbors in the third part, form a small exceptional set $W_1$.
	Next, we greedily place into the two large parts any vertex of $W_1$ that already sees enough neighbors on the opposite side.
	The vertices that remain in the reservoir form a graph $G[W_2]$ of large minimum degree, and a maximum cut of this graph supplies the missing external neighbors.
	The detailed proof below follows this plan.
	
	\medskip
	
	\noindent {\bf Proof of Theorem \ref{Thm:min-outdegree}.}
	Fix constants $0\leq c<1$ and $0<\varepsilon\leq \frac{1-c}{10}.$
	Let $G$ be a graph with $n\geq n_0(c,\varepsilon)$ vertices.
	Let $V(G)=X\cup Y\cup Z$ be the tripartition from Lemma \ref{Lem:outdegree-tripar} such that
	$(\frac{1-c}{2}-\frac{11}{10}\varepsilon)n\leq |X|,|Y|\leq (\frac{1-c}{2}-\frac{9}{10}\varepsilon)n$,
	every active vertex in $Z$ is both  $X$-good and $Y$-good, and
	\begin{equation}\label{equ:outdeg 2}
		\sum_{i\in I} i\cdot |V^i\backslash (V^i_X\cap V^i_Y)|\leq \frac{(1-c)\varepsilon^2n}{10^4}.
	\end{equation}
	Recall the definition of $\eta_i$ in \eqref{equ:eta}.
	We claim that
	\begin{equation}\label{equ:outdeg delta}
		\eta_i\geq \frac{\varepsilon}{5} \mbox{ for each } i\in I, \mbox{~ and thus ~} \eta=\min_{i\in I} \eta_i\geq \frac{\varepsilon}{5}.
	\end{equation}
	If $\eta_i=\mu_i$, then $\eta_i=4d_{c,\varepsilon}\cdot i^{\frac{1}{2}(\varepsilon-1)}+2\varepsilon\geq 2\varepsilon$.
	Now assume that $\psi(i)< \left(\frac{1-c}{8}\right)i$ and $\eta_i=4\varepsilon\lambda_i/(1-c)$.
	In this case, using \eqref{equ:psi-2} we see that $(1-c)\lambda_i+2\varepsilon=\mu_i>\frac{1-c}{4}$.
	Since $\varepsilon\leq \frac{1-c}{10}$, this implies that $\lambda_i\geq \frac{1}{20}$ and thus
	$\eta_i=4\varepsilon\lambda_i/(1-c)\geq \frac{\varepsilon}{5}$, proving this claim.

	We first extract a large bipartite core from the initial pair $(X,Y)$.
	Let $H=(X,Y)_G$ be the induced bipartite subgraph of $G$ with parts $X$ and $Y$.
	We propose applying Lemma~\ref{Lem:key} to $H$ by choosing $A_i := V^i \cap V(H)$, $a_i := \psi^*(i)$, and $\eta_i$ as defined in \eqref{equ:eta}, for all $i\in I$.
	Let $A_i^+$ be as defined in Lemma~\ref{Lem:key}, that is,
	$$A_i^+= \{v \in A_i : d_H(v) \geq 2(1+\eta_i)a_i\}=\{v \in A_i : d_H(v) \geq 2(1+\eta_i)\psi^*(i)\}.$$
	Using \eqref{def:outdegree S-good}, we derive that
	$$A_i^+=(V^i_X\cap Y)\cup (V^i_Y\cap X)\supseteq V^i_X\cap V^i_Y\cap A_i.$$
	Since every active vertex in $Z$ is both $X$-good and $Y$-good, we see that for every $i\in I$,
	\begin{equation*}
		A_i\backslash A_i^+\subseteq A_i\backslash (V^i_X\cap V^i_Y)=V^i\backslash (V^i_X\cap V^i_Y).
	\end{equation*}
	This, combined with \eqref{equ:outdeg 2} and \eqref{equ:outdeg delta},
	implies that condition \eqref{equ:key} of Lemma~\ref{Lem:key} is satisfied:
	\begin{equation}\label{equ:outdeg sumH}
		\left(1+\frac{1}{\eta}\right)\cdot \sum_{i\in I} a_i\cdot |A_i\backslash A_i^+|\leq \frac{10}{\varepsilon}\cdot
		\sum_{i\in I}i\cdot |V^i\backslash (V^i_X\cap V^i_Y)|\leq \frac{(1-c)\varepsilon n}{1000}<|V(H)|.
	\end{equation}
	Then Lemma~\ref{Lem:key} guarantees the existence of a non-empty bipartite subgraph $H'$ of $H$ with parts $X' \subseteq X$ and $Y' \subseteq Y$, satisfying that
	\begin{equation}
		\label{equ:outdeg d(H')}
		d_{H'}(x)\geq a_i= \psi^*(i) \geq \psi(i) \text{ for every } x\in (X'\cup Y')\cap V^i,
	\end{equation}
	and
	\begin{equation}\label{equ:outdeg-key}
		|V(H\backslash H')|\le \sum_{i\in I} a_i|A_i\backslash V(H')|\le
		\left(1+\frac{1}{\eta}\right)\cdot \sum_{i\in I} a_i |A_i\backslash A_i^+|\leq \frac{(1-c)\varepsilon n}{1000},
	\end{equation}
	where $(X\backslash X')\cup (Y\backslash Y')=V(H\backslash H')\subseteq \cup_{i\in I} A_i$.
	Using the definition \eqref{equ:phi*} of $\psi^*$, we see that
	$$i\leq \frac{8\psi^*(i)}{1-c}=\frac{8a_i}{1-c} \mbox{ for each } i\in I.$$
	This, together with \eqref{equ:outdeg-key}, implies that
	\begin{equation}
		\label{equ:outdeg 3}
		\sum_{i\in I}i\cdot |V(H\backslash H')\cap V^i|\leq \frac{8}{1-c}\cdot\sum_{i\in I}a_i |V(H\backslash H')\cap V^i|=\frac{8}{1-c}\cdot\sum_{i\in I} a_i|A_i\backslash V(H')|\le \frac{\varepsilon n}{125}.
	\end{equation}
	
	The estimate \eqref{equ:outdeg 3} shows that the exceptional set discarded from $H$ is very small even after weighting vertices by their degrees.
	We now absorb these discarded vertices, together with their neighbors in $Z$, into a new reservoir $W_1$; the remaining vertices of $Z$ will still be good with respect to both sides.
	We further define a 4-partition $V(G)=X_1\cup Y_1\cup Z_1\cup W_1$,
	where $$X_1=X', ~~Y_1=Y', ~~W_1=V(H\backslash H')\cup \left(\bigcup_{x\in V(H\backslash H')} (N_G(x)\cap Z)\right), \mbox{ ~~and~ } Z_1=Z\backslash W_1.$$
	Then every $x\in Z_1$ satisfies that $N_G(x)\cap X\subseteq X_1$ and $N_G(x)\cap Y\subseteq Y_1$.
	So the fact that every active vertex in $Z$ is $X$-good and $Y$-good directly implies that
	\begin{equation}\label{equ:outdeg 4}
		\text{every active vertex in } Z_1 \text{ is } X_1\text{-good and } Y_1\text{-good}.
	\end{equation}
	On the other hand, \eqref{equ:outdeg-key} and \eqref{equ:outdeg 3} show that
	\begin{equation}\label{equ:outdeg D'}
		|W_1|\leq |V(H\backslash H')|+\sum_{i\in I} i\cdot|V(H\backslash H')\cap V^i|\le \frac{\varepsilon n}{50}.
	\end{equation}
	
	We claim that for every $i\in I$ and every vertex $x\in W_1\cap V^i$,
	\begin{equation}\label{equ:outdeg 5}
		|N_G(x)\cap (X_1\cup Y_1\cup W_1)|\ge 4\psi(i).
	\end{equation}
	This is clear for every $x\in V(H\backslash H')\cap V^i$, as $|N_G(x)\cap (X_1\cup Y_1\cup W_1)|=d_G(x)=i\geq 4\psi(i)$.
	It remains to consider $x\in (W_1\cap Z)\cap V^i$.
	Since every active vertex in $Z$ is $X$-good and $Y$-good,
	we see that
	$|N_G(x)\cap (X_1\cup Y_1\cup W_1)|\geq d_{X\cup Y}(x)\ge 4(1+\eta_i)a_i\ge 4 a_i\ge 4\psi(i)$. This proves the claim.
	
	Our next goal is to place as many vertices of $W_1$ as possible directly into $X_1$ or $Y_1$.
	If a vertex already has enough neighbors on one side, we move it to the opposite side and remove it from the reservoir.
	We then apply a greedy algorithm, which iteratively moves some vertices of $W_1$ to $X_1\cup Y_1$ as follows.
	Initially, let $X_2:=X_1, Y_2:=Y_1$ and $W_2:=W_1$.
	If there is a vertex $x\in W_2\cap V^i$ with $d_{X_2}(x)\ge \psi(i)$,
	then update $W_2:=W_2\backslash \{x\}$, $X_2:=X_2$ and $Y_2:=Y_2\cup \{x\}$;
	if there is a vertex $x\in W_2\cap V^i$ with $d_{Y_2}(x)\geq \psi(i)$,
	then update $W_2:=W_2\backslash \{x\}$, $X_2:=X_2\cup \{x\}$ and $Y_2:=Y_2$;
	otherwise, this algorithm terminates.
	When terminating, this results in a new 4-partition $V(G)=X_2\cup Y_2\cup Z_2\cup W_2$ such that
	$$X_1\subseteq X_2,~~Y_1\subseteq Y_2,~~Z_2=Z_1,~~W_2\subseteq W_1,$$ and for every $x\in W_2\cap V^i$, $d_{X_2}(x)<\psi(i)$ and $d_{Y_2}(x)<\psi(i)$.
	The last property, together with \eqref{equ:outdeg 5} and the fact that $X_2\cup Y_2\cup W_2=X_1\cup X_2\cup W_1$, implies that for every $x\in W_2\cap V^i,$ it holds
	$$d_{W_2}(x)\ge 2\psi(i).$$
	At this point the only unresolved vertices are those in $W_2$.
	The previous inequality shows that $G[W_2]$ is still dense enough, so a maximum cut of this graph will provide the missing external neighbors for these remaining vertices.
	Let $(W^+,W^-)$ be a max-cut of $G[W_2]$.
	Then by a well-known property, for every $x\in W_2\cap V^i$,
	\begin{equation}\label{equ:outdeg maxcut}
		d_{(W^+,W^-)}(x)\geq \frac{d_{W_2}(x)}{2}\geq \psi(i).
	\end{equation}
	The above greedy algorithm shows that every vertex $x\in (X_2\backslash X_1)\cap V^i$ satisfies $d_{Y_2}(x)\ge \psi(i)$.
	Combining this with the fact $X' = X_1 \subseteq X_2$ and \eqref{equ:outdeg d(H')}, we obtain
	\begin{align}\label{equ:X2Y2}
		d_{Y_2}(v)\ge \psi(i) \mbox{ for every } v\in X_2\cap V^i, \mbox{ and similarly, } d_{X_2}(v)\ge \psi(i) \mbox{ for every } v\in Y_2\cap V^i.
	\end{align}
	
	Now we define a tripartition $V(G)=X_3\cup Y_3\cup Z_3$ satisfying that
	$$X_3=X_2\cup W^+, ~~  Y_3=Y_2\cup W^-, \text{~~and~~} Z_3=Z_2=Z_1.$$
	We show this is the desired tripartition.
	Firstly, by \eqref{equ:outdeg maxcut} and \eqref{equ:X2Y2}, we see clearly that
	$$d_{(X_3,Y_3)}(x)\geq \psi(i) \mbox{ ~holds for every ~} x\in (X_3\cup Y_3)\cap V^i,$$
	i.e., condition (b) of Theorem~\ref{Thm:min-outdegree} holds.
	Secondly, since $X\backslash W_1\subseteq X_1\subseteq X_3\subseteq X_1\cup W_1\subseteq X\cup W_1$,
	we can derive from \eqref{equ:outdeg D'} that
	$$\left(\frac{1-c-3\varepsilon}{2}\right)n\leq |X|-|W_1|\leq |X_3|\leq |X|+|W_1|\le \left(\frac{1-c-\varepsilon}{2}\right)n;$$
	similarly, we have $\left(\frac{1-c-3\varepsilon}{2}\right)n\leq |Y_3|\leq \left(\frac{1-c-\varepsilon}{2}\right)n$.
	So condition (a) of Theorem~\ref{Thm:min-outdegree} holds.
	Finally, since $X_1\subseteq X_3, Y_1\subseteq Y_3$ and $Z_3=Z_1$,
	by \eqref{equ:outdeg 4} we can derive that every active vertex in $Z_3$ is both $X_3$-good and $Y_3$-good.
	Using \eqref{equ:phi*} and \eqref{def:outdegree S-good},
	this shows that every active vertex $v\in Z_3\cap V^i$ satisfies both $d_{X_3}(v)\geq 2\cdot \psi(i)$ and $d_{Y_3}(v)\geq 2\cdot \psi(i)$,
	so condition (c) of Theorem~\ref{Thm:min-outdegree} holds,
	finishing the proof.
	\qed

	\section{Concluding remarks and open problems}\label{sec:conclusion}
	\noindent
	In this paper, we combine both probabilistic and deterministic arguments to prove several results on bisections in graphs under degree constraints. Our main results in general settings demonstrate that every graph $G$ has a bisection where each vertex $v$ has at least $d_G(v)/4 - o(d_G(v))$ neighbors in its own part, as well as a bisection where each vertex $v$ has at least $d_G(v)/4 - o(d_G(v))$ neighbors in the opposite part.
	It would be extremely interesting to improve the aforementioned $1/4$ constants to $1/2$, as this would be optimal when considering random bisections.
	If this improvement holds true, it would also yield a weak version of the conjecture by Bollob\'as and Scott (Conjecture~8 in \cite{BS}) regarding bisections with external degree constraints.
	
	Specifically, we would like to raise the following minimum-degree version of the question:
	
	\begin{question}\label{que:2k}
		Is there a function $f(k) = 2k + o(k)$ such that every graph with minimum degree at least $f(k)$ has a bisection where each vertex has at least $k$ neighbors in its own part, as well as a bisection where each vertex has at least $k$ neighbors in the opposite part?
	\end{question}
	
	\noindent We demonstrate in Corollaries \ref{coro:int-bisec} and \ref{coro:ext-bisec} that both bounds of $4k + o(k)$ are sufficient. However, Lemma~\ref{Lem:key} presents a significant barrier, preventing us from improving these bounds beyond $4k + o(k)$ in either direction.
	Moreover, there are additional obstacles in our arguments. For the external degree direction,
	the calculation just before \eqref{equ:indeg B-good} indicates that the best possible bound (aside from Lemma~\ref{Lem:key}) that could be derived from our methods is at least $3k + o(k)$.
	In the internal degree direction, the analysis between \eqref{equ:outdeg 5} and \eqref{equ:outdeg maxcut} suggests that our proof cannot achieve a minimum degree bound lower than $4k + o(k)$ on a second occasion.
	To address Question~\ref{que:2k}, we believe that some novel ideas will be necessary.
	
	We note that both Corollaries~\ref{coro:int-bisec-main} and \ref{coro:int-bisec} extend the result \eqref{equ:KO2} of K\"uhn and Osthus \cite{KO}, in addition to the bisection requirement.
	One may wonder if there is a bisection analog for another result \eqref{equ:KO} of K\"uhn and Osthus \cite{KO}.
	The answer is negative even for $k=1$, as demonstrated by the following graphs first provided in \cite{KO}:
	for any $\ell \geq 2$ and sufficiently large $n$, let $G$ be the bipartite graph with parts $X = [n]$ and $Y = \{v_F : F \in \binom{[n]}{\ell}\}$, where $i \in X$ and $v_F \in Y$ are adjacent if and only if $i \in F$.
	Note that the minimum degree of $G$ is $\ell$, which can be arbitrarily large.
	We now show that $G$ has no bisection $A \cup B$ satisfying \eqref{equ:KO} even for $k=1$.
	Suppose for a contradiction that $A \cup B$ is such a bisection, meaning every vertex has at least one neighbor in its own part and every vertex in $A$ has at least one neighbor in $B$.
	Since $|Y|$ is much larger than $|X|$, we have $|A \cap Y| \geq |Y|/3 = \frac{1}{3} \binom{n}{\ell}$.
	If $|A \cap X| < \ell$, then $e(G[A]) < \ell \binom{n-1}{\ell-1} < \frac{1}{3} \binom{n}{\ell} \leq |A \cap Y|$, a contradiction.
	Thus, $|A \cap X| \geq \ell$. Let $F$ be a subset of size $\ell$ in $A \cap X$.
	We observe that the vertex $v_F$ has none of its neighbors in $B$. This leads to a final contradiction, regardless of which part $v_F$ belongs to.

	Our proofs can be extended to derive analogous results for multi-partitions with arbitrary part-size constraints. For internal degrees, based on the arguments in Section~\ref{sec:int}, we can prove the following:
	\begin{theorem}\label{Thm:int-multipart}
		For any $r \geq 2$ and given $\alpha_1, \ldots, \alpha_r \in (0,1)$ with $\sum_{i=1}^r \alpha_i = 1$, every $n$-vertex graph $G$ has an $r$-partition $V(G) = V_1 \cup \ldots \cup V_r$ with $|V_i| = \alpha_i n$ such that every vertex $v\in V_i$ has at least $\left(\frac{\alpha_i}{2} + o(1)\right) d_G(v)$ neighbors in $V_i$.
	\end{theorem}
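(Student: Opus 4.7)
The plan is to extend the two-stage framework of Section~\ref{sec:int} from the tripartition setting of Theorem~\ref{Thm:min-indegree} to general $r$-partitions with arbitrary prescribed part sizes. Fix a small $\varepsilon>0$, set $\mu_i=4d_\varepsilon\cdot i^{(\varepsilon-1)/2}+2\varepsilon$ as in \eqref{equ:phi2}, and for each $j\in[r]$ define
$$\varphi_j(i)=\left(\frac{\alpha_j}{2}-\frac{\mu_i}{2}\right)i,$$
so that $\varphi_j(i)=(\alpha_j/2-o(1))i$ as $i\to\infty$. For $S\subseteq V(G)$ and $j\in[r]$, declare a vertex $v\in V^i$ to be \emph{$(S,j)$-good} if $d_S(v)\ge(\alpha_j-\mu_i/2)i$, which is about twice the target threshold $\varphi_j(i)$.

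First I would prove an $r$-part analogue of Lemma~\ref{Lem:indegree-tripar1}. Consider a random $(r+1)$-partition $V(G)=U_1\cup\cdots\cup U_r\cup C$ in which each vertex independently joins $U_j$ with probability $\alpha_j-\varepsilon$ and lands in $C$ with probability $r\varepsilon$. A Chernoff-bound calculation identical to the one in the proof of Lemma~\ref{Lem:indegree-tripar1}, together with Lemma~\ref{Lem:exist C} and Markov's inequality, yields a realization in which $|U_j|=\alpha_j n\pm O(\varepsilon n)$ for every $j$ and the total \emph{badness weight} $\sum_{i\ge 1} i\cdot|V^i\setminus\bigcap_{j=1}^r V^i_{(U_j,j)}|$ is at most $O(\varepsilon^2 n)$. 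Relocating the few bad vertices currently sitting inside $C$ into the $U_j$'s arbitrarily produces an initial partition in which every vertex remaining in $C$ is $(U_j,j)$-good for every $j$, while the badness weight and part sizes are preserved up to $O(\varepsilon^2 n)$.

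Next I would iteratively apply an $r$-part analogue of Lemma~\ref{Lem:indegree-tripar2}, processing the parts in the order $j=1,2,\ldots,r$. In round $j$, apply Lemma~\ref{Lem:key} to $H=G[U_j]$ with $a_i=\varphi_j(i)$ and $\eta_i=\mu_i$ to extract a large subset $U_j^\ast\subseteq U_j$ on which every vertex already meets the in-part degree target; then run the greedy relocation of Lemma~\ref{Lem:indegree-tripar2}, evicting each deficient vertex $x\in U_j\setminus U_j^\ast$ into some part $U_{j'}$ in which $x$ is already $(U_{j'},j')$-good, while simultaneously shrinking $C$ along $N_G(x)$; finally, replenish $U_j$ by absorbing a few $C$-vertices that neighbour the just-processed $U_j$-vertices, exactly as $C_1\to C_2$ in that proof. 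After all $r$ rounds, distribute the remaining vertices of $C$ arbitrarily to achieve $|V_j|=\alpha_j n$ exactly; since every remaining vertex of $C$ is $(U_j,j)$-good for every $j$, this final step only increases in-part degrees.

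The main difficulty lies in ensuring that processing part $U_j$ does not spoil the in-part degree guarantee already established for the earlier parts $U_{j'}$ with $j'<j$. The crucial invariant, playing the role of property (2e) in Lemma~\ref{Lem:indegree-tripar2}, is that any vertex relocated into $U_{j'}$ during round $j$ must be $(U_{j'},j')$-good at the moment of insertion. This in turn requires a pigeonhole-type statement asserting that a vertex failing the $U_j$-goodness test has a surplus of neighbours in at least one other part; it follows from the $O(\varepsilon^2 n)$ badness budget inherited from the random partition together with the gap of roughly $(\mu_i/2)\cdot i$ between the $(S,j)$-good threshold and the in-part target $\varphi_j(i)$. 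Once this invariant is maintained through all $r$ rounds, the total number of vertex movements stays within the initial $O(\varepsilon^2 n)$ budget, and the final partition satisfies $d_{V_j}(v)\ge\varphi_j(d_G(v))=(\alpha_j/2-o(1))d_G(v)$ for every $v\in V_j$, as required.
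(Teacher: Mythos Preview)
Your proposal is correct and is exactly the extension of the Section~\ref{sec:int} framework that the paper has in mind (the paper does not spell out a proof of this theorem, only asserting that ``based on the arguments in Section~\ref{sec:int}'' it follows). One small correction to your justification of the key step: the pigeonhole for relocating an evicted vertex does not use the $O(\varepsilon^2 n)$ badness budget; it is a raw degree count---if $|N_G(v)\cap(U_j\cup C)|<\varphi_j(i)$ then $\sum_{j'\neq j}d_{U_{j'}}(v)>(1-\alpha_j/2+\mu_i/2)i$, whereas failure of $(U_{j'},j')$-goodness for every $j'\neq j$ would force this sum below $\sum_{j'\neq j}(\alpha_{j'}-\mu_i/2)i<(1-\alpha_j)i$, a contradiction---and the badness budget is what bounds the \emph{number} of vertices moved in each round, not where they can go.
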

	\noindent For external degrees, the analogous result is as follows:
	\begin{theorem}\label{Thm:ext-remark}
		For any $r \geq 2$ and given $\alpha_1, \ldots, \alpha_r \in (0,1)$ with $\sum_{i=1}^r \alpha_i = 1$, every $n$-vertex graph $G$ has an $r$-partition $V(G) = V_1 \cup \ldots \cup V_r$ with $|V_i| = \alpha_i n$ such that every vertex $v \in V_i$ satisfies $d_{V(G) \setminus V_i}(v) \geq \frac{1}{2}(1 - \alpha_i + o(1)) \cdot d_G(v)$.
	\end{theorem}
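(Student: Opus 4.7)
The plan is to adapt the two-stage framework of Theorem~\ref{Thm:min-outdegree} to the $r$-partite setting. Fix a small $\varepsilon>0$ and, for each $i\in[r]$ and $d\ge 1$, set the target $\psi_i(d)=\tfrac{1}{2}(1-\alpha_i)d-O(d^{(1+\varepsilon)/2}+\varepsilon d)$ and its truncation $\psi_i^*(d)=\max\{\psi_i(d),\tfrac{1-\alpha^*}{8}d\}$ (where $\alpha^*=\max_j\alpha_j$), following the device in \eqref{equ:phi*}. The goal is an intermediate $(r{+}1)$-partition $V(G)=V_1\cup\cdots\cup V_r\cup Z$ with $|V_i|\approx(1-r\varepsilon)\alpha_i n$ and $|Z|=\Theta(\varepsilon n)$ in which every $v\in V_i$ of degree $d$ satisfies $d_{\bigcup_{j\neq i}V_j}(v)\ge\psi_i(d)$, and every active vertex in $Z$ has close-to-expected degree in each $V_j$. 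Redistributing $Z$ among the $V_j$'s with weights $\alpha_j$ and then rebalancing sizes to exactly $\alpha_i n$ perturbs each external count by only $o(d_G(v))$ on active vertices, which is absorbed by the slack in $\psi_i$.

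For the random stage I place each vertex independently into $V_i$ with probability $(1-r\varepsilon)\alpha_i$ and into $Z$ with probability $r\varepsilon$. A Chernoff estimate (Lemma~\ref{Lem:Chernoff}) analogous to \eqref{equ:pi} bounds the probability that a degree-$d$ vertex deviates significantly from expectation in any $V_j$ by $r\cdot\exp(-\Omega(d^\varepsilon))$; combined with Lemma~\ref{Lem:exist C} and Markov (Lemma~\ref{Lem:Markov}), this yields an initial partition with size concentration and $\sum_{d\ge 1}d\cdot|\{v\in V^d:v\text{ is bad}\}|=O(\varepsilon^2 n)$. Moving every bad vertex out of $Z$ costs only $O(\varepsilon^2 n)\ll\varepsilon n=|Z|$ and leaves every remaining $z\in Z$ close to expectation in each $V_j$. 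I then apply Lemma~\ref{Lem:key} to the auxiliary structure on $V_1\cup\cdots\cup V_r$ with $a_i=\psi_i^*(d)$ and, for $v\in V_i$, the role of ``$d_H(v)$'' played by $d_{\bigcup_{j\neq i}V_j}(v)$; mimicking \eqref{equ:outdeg sumH}--\eqref{equ:outdeg-key}, this produces a sub-partition meeting every retained vertex's target together with an orphan set $W$ of total weight $O(\varepsilon n)$.

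The orphan step replaces the max-cut argument culminating in \eqref{equ:outdeg maxcut}. I run the obvious greedy loop: while some orphan $x\in W\cap V^d$ admits an $i$ with $d_{\bigcup_{j\neq i}V_j}(x)\ge\psi_i(d)$ in the current committed parts, place $x$ into $V_i$. Any surviving orphan satisfies $(r-1)\sum_i d_{V_i}(x)=\sum_i d_{\bigcup_{j\neq i}V_j}(x)<\sum_i\psi_i(d)=\tfrac{r-1}{2}d-o(d)$, which combined with $\sum_i d_{V_i}(x)=d-d_W(x)-d_Z(x)$ forces $d_W(x)+d_Z(x)\ge\tfrac{d}{2}-o(d)$; since good orphans have $d_Z(x)=O(\varepsilon d)$, this yields $d_W(x)=\Omega(d)$ for almost all survivors. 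I then partition $W$ randomly into $W_1,\dots,W_r$ with weights $\alpha_1,\dots,\alpha_r$: for a survivor $x\in V^d$ assigned to $W_i$, its expected external neighborhood from $W$ is $(1-\alpha_i)d_W(x)\approx\tfrac{1}{2}(1-\alpha_i)d=\psi_i(d)+o(d)$, so Chernoff plus the linearity $d_W(x)=\Omega(d)$ makes this succeed uniformly for all survivors with positive probability. A final rebalancing that moves $O(\varepsilon n)$ lowest-degree vertices between parts attains $|V_i|=\alpha_i n$ exactly. The main obstacle is precisely this orphan-resolution step: the bipartite identity ``max-cut of any graph captures half of every vertex's edges'' has no deterministic $r$-partite analogue for $r\ge 3$, so one must replace it by a random partition and verify uniform success of the heterogeneous targets $\psi_i$; combined with arranging the final size rebalance without violating the established degree bounds, this is where the proof demands the most care.
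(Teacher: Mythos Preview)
Your two-stage framework---random $(r{+}1)$-partition, weighted control via Lemma~\ref{Lem:exist C}, application of Lemma~\ref{Lem:key} to the cross structure on $\bigcup_j V_j$, and greedy absorption of orphans---matches the paper's. The divergence is at the orphan-resolution step, and there your proposal has a genuine gap.

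You assert that the max-cut fact ``every vertex sees at least half its degree across the cut'' has no deterministic $r$-partite analogue for $r\ge 3$, but it does, and this is exactly what the paper uses. Minimise $\sum_{j=1}^r e(U_j)/\alpha_j$ over all $r$-partitions of $G[W]$; at a minimiser, moving any $x\in U_i$ to $U_j$ cannot help, so $d_{U_i}(x)/\alpha_i\le d_{U_j}(x)/\alpha_j$ for every $j$, and summing over $j$ gives $d_{W\setminus U_i}(x)\ge(1-\alpha_i)\,d_W(x)$. This is Lemma~\ref{Lem:max-r-cut}, invoked by the paper in place of \eqref{equ:outdeg maxcut}. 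Combined with your own computation $d_W(x)\ge\tfrac{d}{2}-o(d)$ for survivors, it yields $d_{W\setminus U_i}(x)\ge\tfrac{1-\alpha_i}{2}d-o(d)=\psi_i(d)$ deterministically for every survivor, with no union bound needed.

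Your random substitute does not close. Nothing in the argument rules out $W$ containing $\Theta(\varepsilon n)$ active survivors whose degree equals the activity threshold $i_0(\varepsilon)$; for each such $x$ the $\alpha$-weighted random placement misses its target with probability bounded below by a constant depending only on $\varepsilon$, so neither a union bound nor an expectation argument yields simultaneous success, and the Lov\'asz Local Lemma is blocked by the uncontrolled maximum degree inside $W$. A smaller issue: your final rebalancing by shuffling $O(\varepsilon n)$ lowest-degree vertices between parts can strip external neighbours from already-certified high-degree vertices; the paper instead achieves the exact sizes solely by distributing the reservoir $Z$, whose vertices are good in every part and whose insertion only increases external degrees.
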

	\noindent Brief proof sketches of Theorems~\ref{Thm:int-multipart} and \ref{Thm:ext-remark} are given in Appendix~\ref{Sec:App-multipart}.
	In particular, the proof of Theorem~\ref{Thm:ext-remark} uses a property regarding a biased Max-$r$-Cut (see Lemma~\ref{Lem:max-r-cut} in Appendix~\ref{Sec:App}), which is used in place of \eqref{equ:outdeg maxcut}.
	In the case $r = 2$, this is equivalent to stating that for any constants $\alpha_1, \alpha_2 \in (0,1)$ with $\alpha_1 + \alpha_2 = 1$,
	every $n$-vertex graph $G$ has a bipartition $V(G) = V_1 \cup V_2$ with $|V_1| = \alpha_1 n$ and $|V_2| = \alpha_2 n$ such that for any $1\leq i\neq j\leq 2$,
	every vertex $v\in V_i$ has at least $\frac{1}{2}(\alpha_j + o(1)) \cdot d_G(v)$ neighbors in $V_j$.
	It would be interesting to see if, for $r \geq 3$, one can strengthen the above result by requiring that for each $v \in V_i$ and for each $j \neq i$, we have $d_{V_j}(v) \geq \frac{1}{2}(\alpha_j + o(1)) \cdot d_G(v)$.
	
	Thomassen \cite{T83} and Hajnal \cite{Haj} proved an analogue of the result \eqref{equ:Thom} for connectivity.
	That is, they established that for all integers $k\geq 1$, there exists a function $g(k)$ such that
	every $g(k)$-connected graph $G$ has a bipartition $V(G)=A\cup B$, where both $G[A]$ and $G[B]$ are $k$-connected.
	We wonder if this result can be extended to bisections.
	
	\begin{question}\label{que:connectivity}
		Is there a function $h(k)$ such that every $h(k)$-connected graph $G$ has a bisection $V(G)=A\cup B$, where both $G[A]$ and $G[B]$ are $k$-connected?
	\end{question}

	\section*{Acknowledgements}
	\noindent The authors are indebted to Chang Shi and Zhiheng Zheng for their careful readings of a preliminary draft and for providing valuable comments that greatly improved the presentation.

	\appendix
	
	\section{A lemma on biased Max-$r$-Cut in graphs}\label{Sec:App}
	
	\begin{lemma}\label{Lem:max-r-cut}
		Given any $r \geq 2$ and $\alpha_1, \ldots, \alpha_r \in (0,1)$ with $\sum_{i=1}^r \alpha_i = 1$,
		every graph $G$ has an $r$-partition $V(G) = U_1 \cup U_2 \cup \ldots \cup U_r$ such that for every $i \neq j$ and every $x \in U_i$, it holds that
		$$\frac{d_{U_i}(x)}{\alpha_i}\le \frac{d_{U_j}(x)}{\alpha_j}.$$
		In particular, this implies that $d_{V(G)\backslash U_i}(x)\ge (1-\alpha_i)\cdot d_G(x)$.
	\end{lemma}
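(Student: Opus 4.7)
The plan is to use a standard local-improvement argument applied to a suitably weighted edge-counting functional. For every $r$-partition $V(G)=U_1\cup\cdots\cup U_r$, define
$$f(U_1,\ldots,U_r)=\sum_{i=1}^{r}\frac{e(G[U_i])}{\alpha_i}.$$
Since $V(G)$ is finite, there are only finitely many such partitions, so $f$ attains its minimum; I would let $(U_1,\ldots,U_r)$ be any minimizer.

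Next I would analyze the local-optimality condition at this minimizer. Fix any $x\in U_i$ and any $j\neq i$, and form the partition obtained by moving $x$ from $U_i$ to $U_j$. Since the parts indexed by indices other than $i$ and $j$ are unchanged, the net change in $f$ equals
$$-\frac{d_{U_i}(x)}{\alpha_i}+\frac{d_{U_j}(x)}{\alpha_j}.$$
Minimality of $f$ forces this quantity to be non-negative, which is exactly the desired inequality $d_{U_i}(x)/\alpha_i\le d_{U_j}(x)/\alpha_j$. The only bookkeeping point is that the definition of an $r$-partition in the paper requires non-empty parts, so I would restrict the minimization accordingly: the only vertices whose reassignment could violate non-emptiness are those sitting alone in their part, but for such a vertex $d_{U_i}(x)=0$ and the inequality holds trivially, so the argument is unaffected.

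For the \emph{in particular} assertion, summing the inequality over all $j\neq i$ gives
$$d_{V(G)\setminus U_i}(x)=\sum_{j\neq i}d_{U_j}(x)\ge \sum_{j\neq i}\frac{\alpha_j}{\alpha_i}\,d_{U_i}(x)=\frac{1-\alpha_i}{\alpha_i}\,d_{U_i}(x),$$
and combining with $d_G(x)=d_{U_i}(x)+d_{V(G)\setminus U_i}(x)$ yields $d_{U_i}(x)\le \alpha_i d_G(x)$, hence $d_{V(G)\setminus U_i}(x)\ge (1-\alpha_i)d_G(x)$. There is no serious obstacle in this proof; the entire argument rests on guessing the correct weighted potential function, after which a single-vertex swap and a one-line summation finish the job.
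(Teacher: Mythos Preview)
Your proposal is correct and follows essentially the same approach as the paper: both minimize the weighted potential $f(U_1,\ldots,U_r)=\sum_i e(G[U_i])/\alpha_i$, read off the inequality from the single-vertex swap, handle the singleton-part case trivially, and then sum over $j$ to obtain $d_{U_i}(x)\le \alpha_i d_G(x)$.
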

	\begin{proof}
		Consider a nontrivial $r$-partition $V(G)=U_1\cup ...\cup U_r$, which minimizes the function $$f(U_1,...,U_r)=\sum_{i=1}^r \frac{1}{\alpha_i}\cdot e(U_i).$$
		Without loss of generality, consider a vertex $x\in U_1$.
		Let $d_j=d_{U_j}(x)$ for each $j$ and let $d=d_G(x)$.
		If $U_1=\{x\}$, then clearly $d_1/\alpha_1=0\le d_j/\alpha_j$.
		So we have $|U_1|\ge 2$.
		Then $f(U_1,...,U_r)\leq f(U_1\backslash \{x\},...,U_j\cup \{x\},...,U_r)$ for every $j\neq 1$,
		implying that $$\frac{e(U_1)}{\alpha_1}+\frac{e(U_j)}{\alpha_j}\le \frac{e(U_1\backslash \{x\})}{\alpha_1}+\frac{e(U_j\cup \{x\})}{\alpha_j}, \mbox{ and thus ~ } \frac{d_1}{\alpha_1}\leq \frac{d_j}{\alpha_j},$$
		as desired.
		That also is $\alpha_j d_1\le \alpha_1 d_j$ for all $j\in [r]$.
		Summing over all $j$, we have $d_1\leq \alpha_1 d$.
		This proves that $d_{U_i}(x)\le \alpha_i\cdot d_G(x)$ for every $i$ and every $x\in U_i$, finishing the proof.
	\end{proof}
	
	\section{Proof sketches for the multipartition extensions}\label{Sec:App-multipart}
	
	\begin{proof}[\bf Proof sketch of Theorem~\ref{Thm:int-multipart}]
		Fix $\varepsilon>0$ and a small constant $\xi>0$.
		For each $i\in [r]$, let $p_i=(1-\xi)\alpha_i$, and let $p_0=\xi$ be the probability of a reserve part $C$.
		Write $\alpha_*:=\min_{i\in [r]} p_i$ and choose
		$$d_{\boldsymbol{\alpha},\varepsilon}:=d_{\varepsilon,\alpha_*}=\frac{1000}{\sqrt{\alpha_*}\cdot \varepsilon^2}.$$
		For each integer $t\ge 0$ and each $i\in [r]$, define
		\[
		\phi_i(t)=\frac{p_i}{2}\,t-\left(2d_{\boldsymbol{\alpha},\varepsilon}\cdot t^{\frac{1+\varepsilon}{2}}+\varepsilon t\right).
		\]
		Now repeat the proof of Theorem~\ref{Thm:min-indegree} with an $(r+1)$-partition
		\[V(G)=X_1\cup \cdots \cup X_r\cup C\]
		in place of the tripartition $A\cup B\cup C$.
		The random step is the same as in Lemma~\ref{Lem:indegree-tripar1}, except that each vertex is placed in $X_i$ with probability $p_i$ and in $C$ with probability $p_0$.
		The deterministic refinement is then applied successively to the $r$ main parts, exactly as Lemma~\ref{Lem:indegree-tripar2} is applied twice in Section~\ref{sec:int}.
		This yields an $(r+1)$-partition with
		\[
		|X_i|\le \alpha_i n,\qquad \alpha_i n-|X_i|=O(\xi n)\ \text{ for each } i\in [r],
		\]
		such that every vertex $x\in X_i$ satisfies $d_{X_i}(x)\ge \phi_i(d_G(x))$, while every vertex $z\in C$ satisfies $d_{X_i}(z)\ge 2\phi_i(d_G(z))$ for all $i\in [r]$.
		
		Now set $m_i=\alpha_i n-|X_i|$.
		Since $\sum_{i=1}^r m_i=|C|$, we may partition $C$ as
		\[C=C_1\cup \cdots \cup C_r\qquad\text{with}\qquad |C_i|=m_i\ \text{ for each } i\in [r].\]
		Finally define $V_i=X_i\cup C_i$.
		For every $x\in X_i$, we have $d_{V_i}(x)\ge d_{X_i}(x)\ge \phi_i(d_G(x))$.
		For every $z\in C_i$, we likewise have
		\[d_{V_i}(z)\ge d_{X_i}(z)\ge 2\phi_i(d_G(z))\ge \phi_i(d_G(z)).\]
		Because
		\[
		\phi_i(t)=\left(\frac{(1-\xi)\alpha_i}{2}-\varepsilon\right)t-o(t)=\left(\frac{\alpha_i}{2}-O(\xi)-o(1)\right)t,
		\]
		every vertex $v\in V_i$ has at least $\left(\frac{\alpha_i}{2}-O(\xi)-o(1)\right)d_G(v)$ neighbors in $V_i$.
		Letting $\xi\to 0$ gives Theorem~\ref{Thm:int-multipart}.
	\end{proof}
	
	\begin{proof}[\bf Proof sketch of Theorem~\ref{Thm:ext-remark}]
		Fix $\varepsilon>0$ and a small constant $\xi>0$.
		For each $i\in [r]$, let $p_i=(1-\xi)\alpha_i$, let $p_0=\xi$, and set
		\[q_i=(1-\xi)(1-\alpha_i)=\sum_{j\neq i} p_j.\]
		Write $q_*:=\min_{i\in [r]} q_i$ and choose
		$$d'_{\boldsymbol{\alpha},\varepsilon}:=d_{\varepsilon,q_*}=\frac{1000}{\sqrt{q_*}\cdot \varepsilon^2}.$$
		For each integer $t\ge 0$ and each $i\in [r]$, define
		\[
		\psi_i(t)=\frac{q_i}{2}\,t-\left(2d'_{\boldsymbol{\alpha},\varepsilon}\cdot t^{\frac{1+\varepsilon}{2}}+\varepsilon t\right).
		\]
		We now repeat the proof of Theorem~\ref{Thm:min-outdegree} with an $(r+1)$-partition
		\[V(G)=X_1\cup \cdots \cup X_r\cup C,\]
		where each vertex is placed in $X_i$ with probability $p_i$ and in $C$ with probability $p_0$.
		The random step and the analogue of Lemma~\ref{Lem:key} are identical to those in Section~\ref{sec:ext}, so we omit them.
		Exactly as in the proof of Theorem~\ref{Thm:min-outdegree}, after the greedy relocation stage we arrive at a small leftover set $W_2$ together with $r$ main parts $X_1,\ldots,X_r$ such that
		\[
		|X_i|\le \alpha_i n,\qquad \alpha_i n-|X_i|=O(\xi n)\ \text{ for each } i\in [r],
		\]
		every vertex $x\in X_i$ already satisfies
		\[d_{\bigcup_{j\neq i}X_j}(x)\ge \psi_i(d_G(x)),\]
		and every remaining reserve vertex $z\in C$ satisfies
		\[d_{\bigcup_{j\neq i}X_j}(z)\ge 2\psi_i(d_G(z))\qquad\text{for every } i\in [r].\]
		
		The only new ingredient is the final treatment of $W_2$.
		Instead of taking an ordinary maximum cut of $G[W_2]$, we apply Lemma~\ref{Lem:max-r-cut} with the weights $\alpha_1,\ldots,\alpha_r$.
		This gives an $r$-partition
		\[W_2=W_1^+\cup \cdots \cup W_r^+\]
		such that every $x\in W_i^+$ satisfies
		\[
		d_{W_2\setminus W_i^+}(x)\ge (1-\alpha_i)d_{W_2}(x).
		\]
		The same calculation used in the passage from \eqref{equ:outdeg 5} to \eqref{equ:outdeg maxcut} shows that every $x\in W_2$ satisfies
		\[
		d_{W_2}(x)\ge \frac{d_G(x)}{2}-o(d_G(x)).
		\]
		Hence, for every $x\in W_i^+$,
		\[
		d_{W_2\setminus W_i^+}(x)\ge (1-\alpha_i)\left(\frac{d_G(x)}{2}-o(d_G(x))\right)
		=\left(\frac{1-\alpha_i}{2}-o(1)\right)d_G(x)\ge \psi_i(d_G(x)).
		\]
		Now add $W_i^+$ to the $i$-th main part, and then distribute the remaining reserve vertices from $C$ so as to obtain exact sizes $|V_i|=\alpha_i n$ for all $i$.
		Vertices already in the main parts only gain external neighbors, and every reserve vertex moved to $V_i$ still has at least $2\psi_i(d_G(v))\ge \psi_i(d_G(v))$ neighbors outside $V_i$.
		Since
		\[
		\psi_i(t)=\left(\frac{(1-\xi)(1-\alpha_i)}{2}-\varepsilon\right)t-o(t)
		=\left(\frac{1-\alpha_i}{2}-O(\xi)-o(1)\right)t,
		\]
		we obtain
		\[
		d_{V(G)\setminus V_i}(v)\ge \left(\frac{1-\alpha_i}{2}-O(\xi)-o(1)\right)d_G(v)
		\]
		for every $v\in V_i$.
		Letting $\xi\to 0$ proves Theorem~\ref{Thm:ext-remark}.
	\end{proof}

\begin{aicauthors}
\begin{authorinfo}[jie]
  Jie Ma\\
  School of Mathematical Sciences, University of Science and Technology of China\\
  Hefei, Anhui 230026, China\\
  Yau Mathematical Sciences Center, Tsinghua University\\
  Beijing 100084, China\\
  jiema\imageat{}ustc\imagedot{}edu\imagedot{}cn
\end{authorinfo}
\begin{authorinfo}[hehui]
  Hehui Wu\\
  Shanghai Center for Mathematical Sciences, Fudan University\\
  Shanghai 200438, China\\
  hhwu\imageat{}fudan\imagedot{}edu\imagedot{}cn
\end{authorinfo}
\end{aicauthors}


\begin{thebibliography}{99}
		
		\bibitem{AS}
		N.~Alon and J.~Spencer,
		\newblock{\em The Probabilistic Method}, 4th Edition, Wiley (2015).
		
		
		\bibitem{ACKK}
		M. Anastos, O. Cooley, M. Kang and M. Kwan,
		Partitioning problems via random processes,
		\emph{J. Lond. Math. Soc. (2)} 110 (2024), Paper No. e70010, 49 pp.
		
		\bibitem{BL}
		A. Ban and N. Linial,
		Internal partitions of regular graphs,
		\emph{J. Graph Theory} 83 (2016), 5--18.
		
		
		\bibitem{BS}
		B. Bollob\'as and A. D. Scott,
		Problems and results on judicious partitions, Random structures and algorithms (Poznan, 2001),
		\emph{Random Structures Algorithms} 21 (2002), 414--430.
		
		
		
		\bibitem{DGZ}
		Y. Dandi, D. Gamarnik and L. Zdeborov\'a,
		Maximally-stable local optima in random graphs and spin glasses: phase transitions and universality, arXiv:2305.03591.
		
		
		
		\bibitem{DPS}
		S. Das, A. Pokrovskiy and B. Sudakov,
		Isomorphic bisections of cubic graphs,
		\emph{J. Combin. Theory Ser. B} 151 (2021), 465--481.
		
		
		
		\bibitem{Ed73}
		C. S. Edwards, Some extremal properties of bipartite graphs,
		\emph{Canadian J. Math.} 25 (1973), 475--485.
		
		\bibitem{Ed75}
		C. S. Edwards, An improved lower bound for the number of edges in a largest bipartite subgraph, in
		\emph{Proc. 2nd Czechoslovak Symposium on Graph Theory}, Prague (1975), 167--181.
		
		
		\bibitem{FHY}
		G. Fan, J. Hou and X. Yu,
		Bisections of graphs without short cycles,
		\emph{Combin. Probab. Comput.} 27 (2018), 44--59.
		
		\bibitem{FKNSS}
		A. Ferber, M. Kwan, B. Narayanan, A. Sah and M. Sawhney,
		Friendly bisections of random graphs,
		\emph{Comm. Amer. Math. Soc.} 2 (2022), 380--416.
		
		\bibitem{GW}
		M. X. Goemans and D. P. Williamson,
		Improved approximation algorithms for maximum cut and satisfiability problems using semidefinite programming.
		\emph{Journal of the ACM}, 42(6) (1995), 1115--1145.
		
		
		\bibitem{Green}
		Ben Green,
		\emph{100 Open Problems},
		Available at \url{https://people.maths.ox.ac.uk/greenbj/papers/open-problems.pdf}
		
		
		\bibitem{Haj}
		P. Hajnal, Partition of graphs with condition on the connectivity and minimum degree,
		\emph{Combinatorica} 3 (1983), 95--99.
		
		
		
		\bibitem{HW21}
		J. Hou and S. Wu,
		On bisections of graphs without complete bipartite graphs,
		\emph{J. Graph Theory} 98 (2021), 630--641.
		
		
		\bibitem{HWY}
		J. Hou, S. Wu and G. Yan,
		On bisections of directed graphs,
		\emph{European J. Combin.} 63 (2017), 44--58.
		
		\bibitem{JMYY}
		Y. Ji, J. Ma, J. Yan and X. Yu,
		On problems about judicious bipartitions of graphs,
		\emph{J. Combin. Theory, Ser. B} 139 (2019), 230--250.
		
		
		\bibitem{Karp}
		R. M. Karp, Reducibility among combinatorial problems,
		in \emph{Complexity of Computer Computations}, (R. Miller and J. Thatcher, eds) Plenum Press, New York, 1972, pp. 85--103.
		
		\bibitem{KO}
		D. K\"uhn and D. Osthus, Partitions of graphs with high minimum degree or connectivity,
		\emph{J. Combin. Theory Ser. B} 88 (2003), 29--43.
		
		
		\bibitem{LLS}
		C. Lee, P. Loh and B. Sudakov,
		Bisections of graphs,
		\emph{J. Comb. Theory Ser. B} 103 (2013), 599--629.
		
		
		\bibitem{LZ}
		J. Lin and Q. Zeng,
		Maximum bisections of graphs without short even cycles,
		\emph{J. Combin. Theory Ser. A} 180 (2021), 105404.
		
		
		
		
		\bibitem{LMZ}
		G. Liu, J. Ma and C. Zu,
		Optimal bisections of directed graphs,
		\emph{Random Structures Algorithms} 64(1) (2024), 138--153.
		
		
		\bibitem{Lov}
		L. Lov\'asz, On decomposition of graphs, \emph{Studia Sci. Math. Hungar.} 1 (1966), 237--238.
		
		
		
		
		\bibitem{MW}
		J. Ma and H. Wu,
		Partitioning graphs with linear minimum degree,
		\emph{Random Structures Algorithms} 65(3) (2024), 601--609.
		
		
		\bibitem{MSS}
		D. Minzer, A. Sah and M. Sawhney,
		On perfectly friendly bisections of random graphs
		\emph{Ann. Probab.} 52(6) (2024), 2281--2341.
		
		
		\bibitem{Scott}
		A. Scott, Judicious partitions and related problems, in: Surveys in Combinatorics,
		in: \emph{London Math. Soc. Lecture Note Ser.}, vol. 327, Cambridge Univ. Press, Cambridge, 2005, pp. 95--117.
		
		
		
		
		
		\bibitem{S96}
		M. Stiebitz, Decomposing graphs under degree constraints,
		\emph{J. Graph Theory} 23 (1996), 321--324.
		
		\bibitem{T83}
		C. Thomassen, Graph decomposition with constraints on the connectivity and minimum degree,
		\emph{J. Graph Theory} 7 (1983), 165--167.
		
		\bibitem{T88}
		C. Thomassen, Paths, circuits and subdivisions, in (L. W. Beineke and R. J. Wilson, Eds.),
		\emph{Selected topics in graph theory III}, Academic Press, New York (1988), 97--133.
		
		
		\bibitem{XY}
		B. Xu and X. Yu,
		On judicious bisections of graphs,
		\emph{J. Combin. Theory Ser. B} 106 (2014), 30--69.
		
		
	\end{thebibliography}
\end{document}